\newtheorem{thm}{Theorem}[section]
\newtheorem{quest}[thm]{Question}
\newtheorem{theorem}[thm]{Theorem}
\newtheorem{cor}[thm]{Corollary}
\newtheorem{prop}[thm]{Proposition}
\newtheorem{lemma}[thm]{Lemma}
\newtheorem{remark}[thm]{Remark}
\title{Monodromy of rational curves on K3 surfaces of low genus}
\author{Sailun Zhan}
\address{Department of Mathematics, Indiana University, Bloomington, IN, 47405, U.S.A.}
\email{zhans@indiana.edu}
\subjclass[2010]{14D05, 14J20, 14J28, 14N10}
\keywords{Rational curves, K3 surfaces, Monodromy groups, Galois groups}
\begin{document}
\let\thefootnote\relax\footnote{\copyright 2022. This manuscript version is made available under the CC-BY-NC-ND 4.0 license https://creativecommons.org/licenses/by-nc-nd/4.0/. https://doi.org/10.1016/j.jpaa.2022.107021}
\begin{abstract}
In many situations, the monodromy group of enumerative problems will be the full symmetric group. In this paper, we study a similar phenomenon on the rational curves in $|\mathcal{O}(1)|$ on a generic K3 surface of fixed genus over $\mathbb{C}$ as the K3 surface varies. We prove that when the K3 surface has genus $g$, $1\leq g\leq 3$, the monodromy group is the full symmetric group.
\end{abstract}
\maketitle

\section{Introduction}

Monodromy problems are ubiquitous in geometry. In many situations, the monodromy group will be the full symmetric group. For example, for a generic plane curve of degree $d$, the only multiple tangent lines to the curve are either flexes or bitangent lines. By the Pl\"ucker formula, we know it has $3d(d-2)$ flexes and $\frac{1}{2}d^{4}-d^{3}-\frac{9}{2}d^{2}+9d$ bitangents. In \cite{Ha79}, Harris proved the following results. The monodromy group of the flexes of a generic degree $d$ plane curve is $ASL_{2}(\mathbb{Z}/3\mathbb{Z})$ if $d=3$, and is the full symmetric group if $d>3$. The monodromy group of the bitangents of a generic degree $d$ plane curve is $O_{6}(\mathbb{Z}/2\mathbb{Z})$ if $d=4$, and is the full symmetric group if $d>4$. The monodromy group of the lines on a generic degree $2n-3$ hypersurface in $\mathbb{P}^{n}$ is the odd orthogonal group $O_{6}^{-}(\mathbb{Z}/2\mathbb{Z})$ if $n=3$, and is the full symmetric group if $n>3$. The monodromy group of the conics tangent to five generic conics is the full symmetric group. For more references on this topic, see the recent survey paper by Frank Sottile and Thomas Yahl\cite{SY21}. For example, the Schubert problems on Grassmannians\cite{Vak06}\cite{RSSS06}, the enumerative problems from the Bernstein-Kushnirenko Theorem\cite{Est19}, the Fano problems\cite{HK20}, and the monodromy group of the projection of an irreducible nondegenerate curve\cite{PS05} or an irreducible and reduced hypersurface\cite{CCM20}.

Now consider the rational curves in the $g$-dimensional linear system $|\mathcal{O}(1)|$ on a K3 surface of genus $g$ over $\mathbb{C}$. By the Yau-Zaslow formula (\cite{YZ96} or \cite{Be99}), generically there will be 24 rational curves if $g=1$, 324 rational curves if $g=2$, and 3200 rational curves if $g=3$. When $g\geq 2$, this is related to universal Severi varieties. In \cite{CD12}, Ciliberto and Dedieu prove the irreducibility of universal Severi varieties parametrizing irreducible, reduced, nodal hyperplane sections of primitive K3 surfaces of genus $g$, with $3\leq g\leq 11$, $g\neq 10$. It is also conjectured that the universal family of all rational curves on K3 surfaces of a given genus is irreducible. See \cite{CD12} or \cite{De09}.

In this paper, we study the monodromy groups of the rational curves in the linear system $|\mathcal{O}(1)|$ on a generic K3 surface of genus $\leq 3$ over $\mathbb{C}$ as the K3 surface varies. 

The following definition of the monodromy group is from \cite[I.]{Ha79}. Let $X$ and $Y$ be irreducible algebraic varieties of the same dimension over $\mathbb{C}$, and $\pi:Y\to X$ a map of degree $d>0$. Let $p\in X$ be a generic point of $X$ and $\Gamma=\pi^{-1}(p)=\{q_1,...,q_d\}$ the fiber of $\pi$ over $p$. If we let $U\subset X$ be a suitable small Zariski open subset of $X$, $V=\pi^{-1}(U)\subset Y$, then the restricted map $\pi:V\to U$ will be an unbranched covering space. For any loop $\gamma:[0,1]\to U$ in $U$ with base point $p$ and any point $q_i\in\pi^{-1}(p)$, there is a unique lifting $\tilde{\gamma}_{i}$ of $\gamma$ to a loop in $V$ with $\tilde{\gamma}_{i}(0)=q_i$. We can define a permutation $\phi_{\gamma}$ of $\Gamma$ by sending each point $q_i$ to the endpoint of the lifted arc $\tilde{\gamma}_i$. Since $\phi_{\gamma}$ depends only on the homotopy class of $\gamma$, we have a homomorphism $\pi_{1}(U,p)\to S_d$. 

The image of this homomorphism is called the \emph{monodromy group} of the map $\pi:Y\to X$ since it does not depend on the choice of Zariski open $U$ as long as $\pi:V\to U$ is unramified. We prove the following Theorems.

\begin{theorem}\label{g1}
The monodromy group of the rational curves in the linear system $|\mathcal{O}(1)|$ on a generic K3 surface of genus 1 over $\mathbb{C}$ is the full symmetric group $S_{24}$.
\end{theorem}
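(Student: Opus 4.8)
\medskip
\noindent\emph{Proof sketch (proposal).} The plan is to realize the $24$ rational curves as the zeros of the discriminant of a Weierstrass fibration, to verify $2$-transitivity of the monodromy group via irreducibility of incidence varieties, to exhibit one transposition via a simple branch point, and then to invoke the classical fact that a $2$-transitive subgroup of $S_n$ containing a transposition is all of $S_n$.

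\emph{Reduction to Weierstrass data.} A generic K3 surface of genus $1$ carries an elliptic pencil $|\mathcal{O}(1)|=|F|\to\mathbb{P}^1$ (with $F^2=0$), whose rational members are exactly its $24$ singular fibres; for generic $S$ these are all of type $I_1$, so they are distinct reduced nodal curves. I would first observe that it is enough to prove the statement after restricting the universal family of rational curves to the sublocus of Jacobian (Weierstrass) elliptic K3 surfaces: its general member has $24$ distinct nodal fibres, hence lies in the Zariski-open $U$ over which the degree-$24$ map $\pi$ is \'etale, so the monodromy group over this sublocus, being the image of $\pi_1$ of an open subset of the sublocus inside $\pi_1(U)$, is a subgroup of the full monodromy group. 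On this sublocus $S$ is given by $y^2=x^3+A(t)x+B(t)$ with $\deg A\leq 8$, $\deg B\leq 12$; the $24$ rational curves are the $24$ zeros of the discriminant $\Delta_{A,B}=4A^3+27B^2$ on $\mathbb{P}^1_t$, and the parameter space may be taken to be the affine space $\mathbb{A}^{22}=\mathbb{A}^9_A\times\mathbb{A}^{13}_B$ of Weierstrass data (the reparametrization and rescaling actions on these data are connected, hence do not change the monodromy group). So the task becomes: the monodromy of the zero set of $\Delta_{A,B}$, as $(A,B)$ varies, is $S_{24}$.

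\emph{$2$-transitivity.} I would deduce transitivity from the irreducibility of the incidence variety $X=\{(A,B,t)\in\mathbb{A}^{22}\times\mathbb{A}^1:\Delta_{A,B}(t)=0\}$: this $X$ is the preimage of the irreducible cuspidal cubic $C=\{4\alpha^3+27\beta^2=0\}\subset\mathbb{A}^2$ (times $\mathbb{A}^1_t$) under the morphism $(A,B,t)\mapsto(A(t),B(t),t)$, which for each fixed $t$ restricts to a surjective linear map $\mathbb{A}^{22}\to\mathbb{A}^2$ and hence is a vector-bundle map over $\mathbb{A}^1_t$; so $X$ is irreducible and $\pi$ is connected. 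The same argument, applied to $W=\{(A,B,t_1,t_2):\Delta_{A,B}(t_1)=\Delta_{A,B}(t_2)=0,\ t_1\neq t_2\}$ over $(\mathbb{A}^1\times\mathbb{A}^1)\setminus(\text{diagonal})$ — now using $t_1\neq t_2$ to see that $(A,B)\mapsto(A(t_1),B(t_1),A(t_2),B(t_2))$ is surjective, so that the fibres are preimages of the irreducible $C\times C$ — shows $W$ is irreducible, i.e.\ the monodromy group $G$ acts transitively on ordered pairs of distinct roots. Hence $G$ is $2$-transitive.

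\emph{A transposition, and conclusion.} I would exhibit a simple branch point. Fix $t_*\in\mathbb{A}^1$ and take $(A_0,B_0)$ general in the codimension-$2$ locus $\{\Delta_{A,B}(t_*)=\partial_t\Delta_{A,B}(t_*)=0\}$; for general such $(A_0,B_0)$ the polynomial $\Delta_{A_0,B_0}$ has a single double root at $t_*$ and $22$ simple roots, with $\partial_t^2\Delta_{A_0,B_0}(t_*)\neq 0$. A direct computation of the differential of $(A,B)\mapsto(\Delta_{A,B}(t_*),\partial_t\Delta_{A,B}(t_*))$, using that the value and the $t$-derivative at $t_*$ of a polynomial are independent linear functionals of its coefficients, shows that this map is a submersion at $(A_0,B_0)$; hence the branch divisor of $\pi$ is smooth (in particular reduced) at the corresponding point, and combined with $\partial_t^2\Delta\neq 0$ at $t_*$ the local model of $\pi$ there is $\{s=\tau^2\}$ together with $22$ trivial sheets, so the local monodromy about the branch divisor is a transposition. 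Since $G$ is $2$-transitive and contains a transposition, conjugating that transposition by elements of $G$ produces every transposition, so $G=S_{24}$. I expect the main obstacle to be the third step: one must really check that some component of the branch divisor is generically reduced and ``nodal'' — that precisely two of the $24$ curves come together, transversally — rather than the branch divisor being everywhere non-reduced or the collision being more degenerate, and it is the submersion computation for $(\Delta_{A,B}(t_*),\partial_t\Delta_{A,B}(t_*))$ that rules this out.
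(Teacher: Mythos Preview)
Your argument is correct and takes a genuinely different route from the paper. You run the classical Harris two-step --- $2$-transitivity from irreducibility of the ordered-pair incidence variety, then a simple transposition from a simple branch point --- exactly the method the paper itself uses for $g=2$ and $g=3$. For $g=1$, however, the paper does something else entirely: it writes down two explicit subfamilies of Weierstrass data on which $\Delta$ factors nicely, reads off that the monodromy contains a diagonally embedded copy of $S_{12}$ (acting on two blocks of $12$) and a diagonally embedded copy of $S_{8}$ (acting on three blocks of $8$), argues primitivity from these two subgroups by a direct block analysis, and then finishes by invoking the classification of primitive permutation groups of degree $24$: only $A_{24}$ and $S_{24}$ have order exceeding $12!$, and the $S_{8}$-copy contains odd permutations. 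Your approach is more uniform with the higher-genus cases and avoids that external classification input; the paper's approach trades that input for avoiding any branch-locus analysis and instead exhibits large explicit subgroups of $M$. One small point to tighten in your third step: the locus $\{\Delta(t_*)=\partial_t\Delta(t_*)=0\}\subset\mathbb{A}^{22}$ is not irreducible --- it has a second equidimensional component where $A(t_*)=B(t_*)=0$, along which your submersion claim fails and $Z$ is actually singular --- so you should say you take $(A_0,B_0)$ general on the component with $B(t_*)\neq 0$; with that adjustment the local model $s=\tau^{2}$ and the transposition follow as you outline.
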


For the genus 1 K3 surfaces, we are considering elliptic K3 surfaces with sections. These surfaces are determined by  the pairs $(A,B)\in H^{0}(\mathbb{P}^{1},\mathcal{O}_{\mathbb{P}^{1}}(8))\times H^{0}(\mathbb{P}^{1},\mathcal{O}_{\mathbb{P}^{1}}(12))$. If we denote $H^{0}(\mathbb{P}^{1},\mathcal{O}_{\mathbb{P}^{1}}(8))\times H^{0}(\mathbb{P}^{1},\mathcal{O}_{\mathbb{P}^{1}}(12))$ by $V$, and define
\[
Z=\{(A,B,t)\in V\times\mathbb{P}^{1}|4A^{3}(t)+27B^{2}(t)=0\},
\]
then the monodromy group of the rational curves is the monodromy group of the projection map $\pi\colon Z\to V$. See $\S 2$.

The next theorem is a direct corollary of Harris's work (\cite{Ha79}) on the monodromy group of the bitangents of a generic degree $d$ plane curves, since a generic primitive K3 surface of genus 2 is a double cover of $\mathbb{P}^{2}$ branching over a smooth sextic curve. See $\S 3$. 

\begin{theorem}\label{g2} 
The monodromy group of the rational curves in the linear system $|\mathcal{O}(1)|$ on a generic primitive K3 surface of genus 2 over $\mathbb{C}$ is the full symmetric group $S_{324}$.
\end{theorem}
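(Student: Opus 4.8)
The plan is to reduce the statement to Harris's theorem on bitangents quoted in the introduction.

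First I record the geometry of the base. A primitive genus-$2$ polarization $L$ on a K3 surface $S$ has $L^{2}=2$ and $h^{0}(S,L)=3$, so $|\mathcal{O}(1)|=|L|\cong\mathbb{P}^{2}$ and the morphism $f:=\phi_{|L|}\colon S\to\mathbb{P}^{2}$ exhibits $S$ as a double cover of $\mathbb{P}^{2}$; since $K_{S}=0$, its branch locus is a plane sextic $C$, smooth when $S$ is, and $\mathcal{O}(1)=f^{*}\mathcal{O}_{\mathbb{P}^{2}}(1)$. Conversely, the double cover of $\mathbb{P}^{2}$ branched along a smooth sextic is a genus-$2$ K3 surface, and two sextics give isomorphic polarized surfaces exactly when they lie in one $PGL_{3}(\mathbb{C})$-orbit, so the irreducible space $\mathcal{P}$ of smooth plane sextics dominates the moduli of primitive genus-$2$ K3 surfaces. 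Since pulling a covering back along a dominant morphism replaces its monodromy group by a subgroup, the monodromy over $\mathcal{P}$ is contained in the monodromy over the moduli space, which is in turn a subgroup of $S_{324}$; hence it suffices to show that the monodromy of the rational curves in $|\mathcal{O}(1)|$, taken over the base $\mathcal{P}$, is all of $S_{324}$ --- and $\mathcal{P}$ is precisely the parameter space occurring in Harris's result.

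Next I identify these rational curves with bitangents. A member of $|\mathcal{O}(1)|$ is $f^{-1}(\ell)$ for a line $\ell\subset\mathbb{P}^{2}$, and $f^{-1}(\ell)\to\ell\cong\mathbb{P}^{1}$ is the double cover branched along the length-$6$ scheme $\ell\cap C$. If $\ell$ meets $C$ with multiplicities $m_{1},\dots,m_{k}$, then $f^{-1}(\ell)$ acquires singularities of type $A_{m_{i}-1}$, of $\delta$-invariant $\lfloor m_{i}/2\rfloor$, and since $p_{a}(f^{-1}(\ell))=2$ (by adjunction, as $\mathcal{O}(1)^{2}=2$) it is an irreducible rational curve exactly when it is irreducible and $\sum_{i}\lfloor m_{i}/2\rfloor=2$. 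Running through the partitions of $6$ subject to this last equation, one checks that every one of them except $(2,2,1,1)$ forces $C$ to be non-generic (and $(2,2,2)$ in any case produces a reducible curve), so the only lines contributing for generic $C$ are the honest bitangents, tangent to $C$ at two distinct points, whose preimage is an irreducible two-nodal rational curve. Therefore $\ell\mapsto f^{-1}(\ell)$ is an isomorphism, over a dense open subset of $\mathcal{P}$, from the covering of $\mathcal{P}$ by bitangent lines onto the covering of $\mathcal{P}$ by rational curves in $|\mathcal{O}(1)|$; both have degree $\tfrac12 d^{4}-d^{3}-\tfrac92 d^{2}+9d=324$ at $d=6$, matching the Yau--Zaslow count.

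Finally, two coverings of $\mathcal{P}$ agreeing over a dense open subset have conjugate monodromy representations, hence the same monodromy group, and by \cite{Ha79} the monodromy of the bitangents of a generic plane curve of degree $d>4$ is the full symmetric group; at $d=6$ this is $S_{324}$, which by the reduction above proves the theorem. The one step needing genuine care is the identification of the rational curves with bitangents: one must verify, by a dimension count in $\mathcal{P}$, that a generic sextic supports no rational member of $|\mathcal{O}(1)|$ beyond the bitangent preimages --- i.e.\ that each degenerate tangency pattern really is non-generic --- and that the induced fiberwise bijection is an honest isomorphism of covering spaces, so that the monodromy transfers. The rest --- identifying the base, the Pl\"ucker count, and the match with Yau--Zaslow --- is routine.
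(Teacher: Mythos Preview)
Your proposal is correct and follows essentially the same approach as the paper: identify a generic primitive genus-$2$ K3 surface with a double cover of $\mathbb{P}^{2}$ branched along a smooth sextic, identify the rational curves in $|\mathcal{O}(1)|$ with preimages of bitangents, and invoke Harris's theorem on the monodromy of bitangents for degree $d\geq 5$. The paper's own proof is a single sentence citing the setup and Harris's result; you have simply spelled out the supporting details (the partition analysis, the moduli-versus-parameter-space reduction) that the paper covers with the phrase ``by the Riemann--Hurwitz formula.''
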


We prove a similar statement for primitive K3 surfaces of genus 3.

\begin{theorem}\label{g3}
The monodromy group of the rational curves in the linear system $|\mathcal{O}(1)|$ on a generic primitive K3 surface of genus 3 over $\mathbb{C}$ is the full symmetric group $S_{3200}$.
\end{theorem}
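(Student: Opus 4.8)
\emph{Proof plan.} A generic primitive K3 surface of genus $3$ is a smooth quartic surface $S\subset\mathbb{P}^{3}$, and $|\mathcal{O}(1)|$ is its system of plane sections; a section $S\cap H$ is a plane quartic, hence of arithmetic genus $3$, and is rational exactly when it has three nodes, i.e.\ when $H$ is a tritangent plane of $S$. Let $U\subset|\mathcal{O}_{\mathbb{P}^{3}}(4)|\cong\mathbb{P}^{34}$ be the dense open locus of smooth quartics with $3200$ distinct tritangent planes, and put
\[
\mathcal{Z}=\overline{\{(S,H)\in|\mathcal{O}(4)|\times(\mathbb{P}^{3})^{\vee}: H\ \text{is a tritangent plane of}\ S\}}.
\]
The monodromy group $G\le S_{3200}$ in question is the monodromy group of the degree $3200$ map $\pi\colon\mathcal{Z}\to|\mathcal{O}(4)|$, which is \'etale over $U$. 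Following \cite{Ha79}, I will show $G$ is transitive, then $2$-transitive (hence primitive), and finally that $G$ contains a transposition; since a primitive subgroup of $S_{n}$ containing a transposition equals $S_{n}$, this gives $G=S_{3200}$.

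\emph{Transitivity.} This is the irreducibility of $\mathcal{Z}$, which is the case $g=3$, three nodes, of the Ciliberto--Dedieu theorem \cite{CD12} on universal Severi varieties of nodal sections of primitive K3 surfaces. One can also argue directly: the projection $\mathcal{Z}\to(\mathbb{P}^{3})^{\vee}$, $(S,H)\mapsto H$, has fibre over a fixed plane $H\cong\mathbb{P}^{2}$ equal to $\{F : F|_{H}\ \text{is a three-nodal plane quartic}\}$, which --- via the surjection $H^{0}(\mathcal{O}_{\mathbb{P}^{3}}(4))\twoheadrightarrow H^{0}(\mathcal{O}_{H}(4))$ --- is an affine bundle over the irreducible Severi variety of rational plane quartics; since $(\mathbb{P}^{3})^{\vee}$ is irreducible and the fibres are all $PGL_{4}$-equivalent, $\mathcal{Z}$ is irreducible. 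Hence $G$ is transitive.

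\emph{$2$-transitivity.} One must show that $\mathcal{Z}^{(2)}:=\bigl(\mathcal{Z}\times_{|\mathcal{O}(4)|}\mathcal{Z}\bigr)\setminus\Delta$ is irreducible. Project it to $\bigl((\mathbb{P}^{3})^{\vee}\times(\mathbb{P}^{3})^{\vee}\bigr)\setminus\Delta$ and fix distinct planes $H_{1},H_{2}$ with $L=H_{1}\cap H_{2}$. In coordinates with $H_{i}=\{x_{i+1}=0\}$, a quartic $F$ is recovered uniquely from $\bigl(F|_{H_{1}},\,F|_{H_{2}},\,R\bigr)$ with $R\in H^{0}(\mathcal{O}_{\mathbb{P}^{3}}(2))$ arbitrary and $F|_{H_{1}},F|_{H_{2}}$ agreeing on $L$, so the fibre of $\mathcal{Z}^{(2)}$ over $(H_{1},H_{2})$ is an affine bundle over
\[
W=\{(C_{1},C_{2}): C_{i}\ \text{a three-nodal quartic in}\ H_{i},\ C_{1}|_{L}=C_{2}|_{L}\}.
\]
Mapping $W$ to the $\mathbb{P}^{4}$ of binary quartics $Q=C_{i}|_{L}$ (the four points $S\cap L$), the fibre over a general $Q$ is $\Sigma_{3}(Q)\times\Sigma_{3}(Q)$, where $\Sigma_{3}(Q)$ is the variety of three-nodal plane quartics through four fixed collinear points. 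Parametrizing such a quartic by its normalization, $\Sigma_{3}(Q)$ is dominated by (an open subset of) $\{(f,s_{1},\dots,s_{4}): f\in\mathbb{P}\bigl(H^{0}(\mathcal{O}_{\mathbb{P}^{1}}(4))^{\oplus 3}\bigr),\ f(s_{i})=p_{i}\}/PGL_{2}$, which is irreducible since it fibres over $(\mathbb{P}^{1})^{4}$ with fibres the projective spaces cut out by the eight independent linear conditions $f(s_{i})=p_{i}$. Propagating irreducibility up the three fibrations, $W$, the fibre of $\mathcal{Z}^{(2)}$, and hence $\mathcal{Z}^{(2)}$ are irreducible, so $G$ is $2$-transitive and in particular primitive.

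\emph{A transposition --- the main obstacle.} It remains to exhibit a transposition in $G$, i.e.\ a simple branch point of $\pi$. The candidate is the divisor $D\subset|\mathcal{O}(4)|$ of quartics having a plane section with an $A_{3}+A_{1}$ configuration (a tacnode plus a node), the limit of a three-nodal section as two of its nodes collide. Over a transverse disc through a general point of $D$ the tacnode reopens into two nodes sitting at $\sigma\pm\sqrt{u}$, so two of the $3200$ tritangent planes merge along the disc and are interchanged by the loop $u\mapsto e^{2\pi i\theta}u$ while the remaining $3198$ are fixed; thus $G$ contains a transposition and $G=S_{3200}$. The principal obstacle is to justify this local picture rigorously: that $D$ is a divisor whose generic point lies on $\overline{\pi^{-1}(U)}$, that exactly two sheets of $\mathcal{Z}$ come together there with ramification index precisely $2$, and that a transverse family of \emph{smooth} quartics through $D$ exists. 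This is a local computation with the versal deformation of the $A_{3}$ singularity, in the spirit of Harris's analysis of coincident bitangents in \cite{Ha79}.
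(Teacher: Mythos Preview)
Your overall strategy---prove double transitivity, then exhibit a simple transposition---is exactly the paper's. Your transitivity and $2$-transitivity arguments are close in spirit to the paper's Lemma~\ref{doubly}, though the paper works with the incidence correspondence $J\subset W\times J_0$ of surfaces together with an \emph{ordered triple of tangency points} (rather than the plane $H$), and obtains irreducibility in one line from the fact that the fibres of $J\to J_0$ and of $J'\to J_0$ are linear subspaces of $W$ of constant codimension. Your route through rational parametrizations and the binary quartic on $L$ is more elaborate than necessary.

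The substantive divergence is in the transposition step, and here your choice of degeneration is different from the paper's and makes the ``principal obstacle'' you identify genuinely harder. You degenerate to a section of type $A_3+A_1$ (tacnode plus node); the paper degenerates to a section of type $A_2+2A_1$ (one cusp, two nodes). This is not cosmetic. Because the $A_2+2A_1$ curve still has \emph{three distinct} singular points, it corresponds to an honest point of $\tilde J=J/S_3$, and since $J$ is smooth (linear fibres over the smooth base $J_0$), local irreducibility at that point is automatic. The count of $3198$ remaining trinodal sections then comes directly from Beauville's result that the compactified Jacobian of a rational $A_2+2A_1$ curve has Euler characteristic $2$ (Theorem~\ref{trinodes}), so the fibre of $\pi$ over such a surface has exactly $3199$ points, one of multiplicity two, and Harris's Lemma~\ref{lemma} applies immediately. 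The only remaining work is the existence of such a quartic with no further degenerate sections, which the paper carries out by a dimension count inside the $20$-parameter family of quartics containing a fixed $A_2+2A_1$ plane section (Lemmas~\ref{irreducible}--\ref{generic}, Proposition~\ref{existence}).

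Your $A_3+A_1$ point, by contrast, lies on the \emph{boundary} of the tritangent correspondence: it has only two singular points, hence no preimage in $J_0$, and lives only in the closure $\mathcal Z$. Consequently you cannot read off local irreducibility from the linear-fibre argument, and your sentence ``the tacnode reopens into two nodes at $\sigma\pm\sqrt u$'' describes a deformation of the \emph{plane curve}, not the two-sheeted structure of $\mathcal Z\to W$ that you actually need; establishing that the local degree of $\pi$ there is exactly $2$ requires a genuine versal-deformation computation you have not done. You also omit entirely the existence step (that a generic quartic in the divisor $D$ has all other rational sections trinodal), which is just as necessary in your setup as in the paper's. In short, the $A_2+2A_1$ degeneration is chosen precisely so that two of the three obstacles you list evaporate.
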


To prove this theorem, we follow the idea in the proof of Theorem \ref{g2}. We first prove the monodromy group is doubly transitive. Then we determine the situation when we will have exactly one less rational curves. Finally, we show that the K3 surface of genus 3 satisfying such a situation does exist. The last two steps imply that the monodromy group contains a simple transposition, which completes the proof. See $\S 4.$

Note that in \cite{CD14}, some partial results on the monodromy group of rational curves for quartic K3 surfaces were obtained, and the following open question was raised.

\begin{quest}
Is the monodromy group of the rational curves in the linear system $|\mathcal{O}(1)|$ on a generic primitive K3 surface of genus $g\geq 2$ the full symmetric group?
\end{quest}

Our results answer this question in some low genus cases.

Using the relations between the Galois groups of the function fields and the monodromy groups, we also prove the following three corollaries which are of arithmetic interest.

Let $F$ be a number field.

\begin{cor}\label{c1}
There exists a Zariski-dense subset of $F$-rational points in $V=H^{0}(\mathbb{P}^{1},\mathcal{O}_{\mathbb{P}^{1}}(8))\times H^{0}(\mathbb{P}^{1},\mathcal{O}_{\mathbb{P}^{1}}(12))$ such that for every elliptic K3 surface over $F$ corresponding to a point in the subset, each of the $24$ nodal rational curves is defined over a degree $24$ non-Galois field extension of $F$.
\end{cor}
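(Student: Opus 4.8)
The plan is to derive Corollary \ref{c1} from Theorem \ref{g1} by Hilbert's irreducibility theorem. Recall from $\S 2$ that the parameter variety $V=H^{0}(\mathbb{P}^{1},\mathcal{O}_{\mathbb{P}^{1}}(8))\times H^{0}(\mathbb{P}^{1},\mathcal{O}_{\mathbb{P}^{1}}(12))\cong\mathbb{A}^{9}\times\mathbb{A}^{13}$ and the incidence variety $Z=\{(A,B,t):4A^{3}(t)+27B^{2}(t)=0\}$ are defined over $\mathbb{Q}$, hence over $F$, and that over a point $(A,B)\in V$ the fiber $\pi^{-1}(A,B)$ consists of the $24$ values $t\in\mathbb{P}^{1}$ over which the elliptic fibration is singular; for generic $(A,B)$ each such fiber is a nodal rational curve, defined over the residue field of the corresponding point $t$, and the $24$ curves (as cycles on the K3 surface) are permuted by $\operatorname{Gal}(\overline F/F)$ in the same way as the $24$ points of $\pi^{-1}(A,B)$. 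Thus the monodromy group of $\pi\colon Z\to V$ is the Galois group of the Galois closure of $\mathbb{C}(Z)$ over $\mathbb{C}(V)$, which by Theorem \ref{g1} equals $S_{24}$. Since this geometric monodromy group is a normal subgroup of the arithmetic monodromy group $\operatorname{Gal}(\widetilde L/F(V))$, where $\widetilde L$ is the Galois closure of $F(Z)/F(V)$, and the latter is in turn a subgroup of $S_{24}$, the arithmetic monodromy group is also all of $S_{24}$.

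Next I would apply Hilbert's irreducibility theorem over the number field $F$, using that $V$ is a rational variety: for all $(A,B)$ in $V(F)$ outside a thin subset $T$, the Galois group over $F$ of the degree-$24$ polynomial $4A^{3}(t)+27B^{2}(t)\in F[t]$ equals the arithmetic monodromy group $S_{24}$. By the standard fact that the complement of a thin subset of $\mathbb{A}^{N}(F)$ is Zariski-dense (if it were contained in a proper closed subset, $V(F)$ itself would be thin), $V(F)\setminus T$ is Zariski-dense in $V$. Intersecting with the Zariski-open locus $U\subseteq V$ over which $\pi$ is unramified, the discriminant has degree exactly $24$ in $t$, the K3 surface is smooth, and all $24$ singular fibers are nodal — a nonempty open locus by $\S 2$ — we still obtain a Zariski-dense set $(V(F)\setminus T)\cap U(F)$, since $T\cup(V\setminus U)(F)$ is again thin. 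This is the desired subset.

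Finally, for $(A,B)$ in this subset, $\operatorname{Gal}(\overline F/F)$ permutes the $24$ values of $t$, equivalently the $24$ nodal rational curves, through the full symmetric group $S_{24}$. In particular the action is transitive, so $4A^{3}(t)+27B^{2}(t)$ is irreducible over $F$, each root $t$ generates a degree-$24$ extension $F(t)/F$, and the corresponding nodal rational curve is defined over $F(t)$ and over no smaller field. This extension is not Galois: its Galois closure has group $S_{24}$, while $F(t)$ corresponds to a point stabilizer $S_{23}<S_{24}$, which is not normal (its normal core is trivial). This proves the corollary. The only genuinely delicate point is promoting the geometric monodromy group over $\mathbb{C}$ to the arithmetic monodromy group over $F$, so that Hilbert irreducibility yields exactly $S_{24}$ rather than merely a transitive subgroup; here this is automatic because $S_{24}$ is already maximal. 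The remaining ingredients — Hilbert's irreducibility theorem, Zariski-density of the complement of a thin set, and the non-normality of point stabilizers in $S_{n}$ for $n\geq 3$ — are standard.
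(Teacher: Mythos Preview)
Your proof is correct and follows essentially the same approach as the paper: sandwich the arithmetic Galois group $\operatorname{Gal}(\widetilde L/F(V))$ between the geometric monodromy group (which is $S_{24}$ by Theorem~\ref{g1}) and $S_{24}$, then apply Hilbert irreducibility to specialize. Your version is more explicit than the paper's about the thin-set formulation, the intersection with the open locus $U$, and the reason the point-stabilizer $S_{23}$ is not normal, but the underlying argument is the same.
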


\begin{cor}\label{c2}
There exists a Zariski-dense subset of $F$-rational points in the linear space of sextic curves in $\mathbb{P}^2$ such that for every primitive K3 surface of genus 2 corresponding to a point in the subset, each of the $324$ nodal rational curves is defined over a degree $324$ non-Galois field extension of $F$.
\end{cor}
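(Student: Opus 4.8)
The plan is to upgrade the complex monodromy computation of Theorem~\ref{g2} to an arithmetic statement by a standard Hilbert irreducibility argument, in the same spirit as Corollary~\ref{c1}. Write $X$ for the linear space of sextic curves in $\mathbb{P}^2$: it is a projective space, hence a rational variety, and it is defined over $\mathbb{Q}$, in particular over $F$. Let $X^{\circ}\subseteq X$ be the dense Zariski-open locus of smooth sextics whose associated double-cover K3 surface carries exactly $324$ pairwise distinct, irreducible, nodal rational curves in $|\mathcal{O}(1)|$ --- that this locus is open and dense follows from the Yau--Zaslow count together with standard genericity statements --- and let $\pi\colon Y\to X^{\circ}$ be the degree $324$ incidence cover whose fiber over a point is the corresponding set of $324$ rational curves; all of this is defined over $F$. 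By Theorem~\ref{g2} the topological monodromy group of $\pi$ over $\mathbb{C}$ is $S_{324}$, and since the monodromy of a cover defined over $\overline{\mathbb{Q}}$ is independent of the algebraically closed base field of characteristic $0$, the geometric Galois group $G_{\mathrm{geom}}$ of the function field extension $F(Y)/F(X^{\circ})$ --- that is, the Galois group of its Galois closure over $\overline{F}(X^{\circ})$ --- is likewise $S_{324}$.

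Now $G_{\mathrm{geom}}$ is isomorphic to a normal subgroup of the arithmetic Galois group $G_{\mathrm{arith}}=\mathrm{Gal}\bigl(\widetilde{F(Y)}/F(X^{\circ})\bigr)$, and $G_{\mathrm{arith}}$ embeds into $S_{324}$ because it acts faithfully on the $324$ branches; hence $G_{\mathrm{arith}}=G_{\mathrm{geom}}=S_{324}$. In particular $Y$ is geometrically irreducible, so $F(Y)$ is a field. Since $X^{\circ}$ is a Zariski-open subset of a projective space over the number field $F$, Hilbert's irreducibility theorem applies: the set $\mathcal{S}$ of points $c\in X^{\circ}(F)$ at which the specialization of $G_{\mathrm{arith}}$ is still all of $S_{324}$ --- equivalently, for which the splitting field over $F$ of the $324$-point fiber $\pi^{-1}(c)$ has Galois group $S_{324}$ --- is the complement of a thin subset of $X^{\circ}(F)$, and is therefore Zariski-dense in $X$.

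Fix $c\in\mathcal{S}$ and let $R_1,\dots,R_{324}$ be the $324$ nodal rational curves in $|\mathcal{O}(1)|$ on the corresponding genus $2$ K3 surface. By construction $\mathrm{Gal}(\overline{F}/F)$ acts on the set $\{R_1,\dots,R_{324}\}$ through a homomorphism onto $S_{324}$ in its standard action on $324$ letters. Consequently the field of definition of $R_i$ is the fixed field of a point stabilizer, a subgroup of index $324$ which, since $324\ge 3$, is not normal in $S_{324}$. Thus each $R_i$ is defined over a degree $324$ non-Galois extension of $F$, which is exactly the assertion of the corollary.

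The only substantial ingredient is Theorem~\ref{g2}; everything else is formal. The points requiring (routine) care are the identification of $G_{\mathrm{geom}}$ with the topological monodromy group and the normality $G_{\mathrm{geom}}\triangleleft G_{\mathrm{arith}}$ --- both classical; compare the discussion relating monodromy groups and Galois groups in \cite{Ha79} --- together with the genericity statements cutting out $X^{\circ}$. Hilbert irreducibility then applies with no difficulty because $X$ is rational over $F$, so I do not expect any real obstacle.
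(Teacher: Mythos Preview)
Your argument is correct and is essentially the paper's own proof, which simply reads ``The proof is similar to that of Corollary~\ref{c1} using the Hilbert irreducibility theorem.'' You have spelled out the same steps in a bit more detail (distinguishing $G_{\mathrm{geom}}$ from $G_{\mathrm{arith}}$ and invoking normality), but the strategy---monodromy equals $S_{324}$ forces the function-field Galois group to be $S_{324}$, then Hilbert irreducibility specializes---is identical.
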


\begin{cor}\label{c3}
There exists a Zariski-dense subset of $F$-rational points in the linear space of quartic surfaces in $\mathbb{P}^3$ such that for every primitive K3 surface of genus 3 corresponding to a point in the subset, each of the $3200$ nodal rational curves is defined over a degree $3200$ non-Galois field extension of $F$.
\end{cor}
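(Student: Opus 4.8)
The plan is to use that a generic primitive K3 surface of genus $3$ is a smooth quartic surface $S\subset\mathbb{P}^{3}$, with $|\mathcal{O}(1)|$ the system of hyperplane sections. A section $H\cap S$ is a plane quartic of arithmetic genus $3$, so it is rational exactly when it acquires three nodes; and $H\cap S$ is singular at a point $p$ precisely when $H=T_{p}S$, so the $3200$ rational curves correspond to the \emph{tritangent planes} of $S$. Let $X$ be a suitable Zariski-open subset of the space of quartics and $\pi\colon Y\to X$ the universal family of tritangent planes, with monodromy group $G\le S_{3200}$. Since a primitive permutation group containing a transposition is the full symmetric group, and a $2$-transitive group is primitive, it suffices to prove (i) $G$ is $2$-transitive and (ii) $G$ contains a transposition. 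This mirrors the structure of Harris's treatment of bitangents of plane curves \cite{Ha79} underlying Theorem \ref{g2}.

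For (i), I would show that the incidence variety $Y$ and the ordered-pair variety $W=\{(S,H_{1},H_{2}) : H_{1}\ne H_{2},\ H_{i}\ \text{tritangent to}\ S\}$ are both irreducible and dominate $X$; irreducibility of $W$ then forces $2$-transitivity. For $Y$: project to the Severi variety of $3$-nodal plane quartics, which is irreducible, and whose fiber over a curve $C$ in a plane $P\cong\mathbb{P}^{2}$ is the linear system of quartic surfaces with $S\cap P=C$, a projective space whose general member is a smooth K3; hence $Y$ is irreducible and dominant. For $W$: two $3$-nodal plane quartics $C_{1}\subset P_{1}$ and $C_{2}\subset P_{2}$ arise as hyperplane sections of a common quartic only if they cut out the same degree-$4$ divisor on the line $L=P_{1}\cap P_{2}$, so I would fiber $W$ over the irreducible configuration space of data $(L,D,P_{1},C_{1},P_{2},C_{2})$ with $C_{i}\cap L=D$, again with fibers linear systems of quartic surfaces; this exhibits $W$ as irreducible and dominant over $X$.

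For (ii), I would identify the generic degeneration in which the rational-curve count drops by exactly one. A tritangent plane has contact points $p_{1},p_{2},p_{3}$; allowing two of them to collide produces a plane $H_{0}$ whose section $H_{0}\cap S_{0}$ has one tacnode (type $A_{3}$) together with one node, still of geometric genus $0$. I would show the locus $B\subset X$ of quartics carrying such a plane is a divisor, that over a general $S_{0}\in B$ exactly two of the $3200$ tritangent planes have come together while the remaining $3198$ stay distinct and generic, and that $\pi$ has local model $t\mapsto t^{2}$ at the coincident point; restricting $\pi$ to a general pencil through $S_{0}$, a small loop about $S_{0}$ then induces a transposition. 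Establishing that $B$ is nonempty with this generic behaviour requires an explicit construction: start from a plane quartic $C_{0}$ with one tacnode and one node, take a general quartic surface $S_{0}$ restricting to $C_{0}$ on that plane, and check that $S_{0}$ is a smooth K3 whose other tritangent planes remain as generic as possible.

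The main obstacle is precisely this last step — producing $S_{0}$ and verifying the branch point is \emph{simple}. One must check not merely that a smooth quartic with a tacnodal-plus-nodal hyperplane section exists, but that it can be chosen so that no extra coincidences among its $3200$ tritangent planes occur, and that $\pi\colon Y\to X$ is only simply branched over the general point of $B$, i.e. the tacnode degeneration merges exactly two sheets with ramification index two. This local analysis of $Y\to X$ along the tacnodal locus, together with the non-vacuousness of the construction, is where the work concentrates; by contrast the $2$-transitivity argument, once the parametrizations above are in place, is comparatively routine.
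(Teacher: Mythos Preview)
Your proposal is aimed at the wrong target. Corollary~\ref{c3} is an \emph{arithmetic} statement over a number field $F$; in the paper its proof is a two-line deduction from Theorem~\ref{g3} (monodromy $=S_{3200}$), exactly parallel to the proof of Corollary~\ref{c1}: one uses that the monodromy group equals the Galois group of the function-field extension $K(\tilde J)/K(W)$ (normality of the base), concludes that $K(\tilde J)/K(W)$ is a degree-$3200$ non-Galois extension, and then invokes Hilbert irreducibility to produce a Zariski-dense set of $F$-points over which the residue-field extension has the same shape. Your write-up never mentions Hilbert irreducibility, the function-field Galois group, or how one passes from the topological monodromy over $\mathbb{C}$ to a statement about specializations at $F$-rational points. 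What you have written is an outline of a proof of Theorem~\ref{g3}, not of the corollary; as a proof of Corollary~\ref{c3} it is incomplete precisely at the one step the paper actually supplies.

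On the monodromy argument itself, your transposition mechanism differs from the paper's and is harder to close. You propose letting two of the three tangency points collide, producing a hyperplane section with a tacnode and a node. The paper instead degenerates one of the three nodes to a \emph{cusp}, obtaining a section with two nodes and one cusp. The advantage of the paper's choice is structural: the singular points remain three in number and non-collinear, so the degenerate tritangent is still represented by a point of the smooth incidence $J\subset W\times J_0$ (fibers of $J\to J_0$ are linear spaces), and local irreducibility at the double point of the fiber is automatic, so Lemma~\ref{lemma} applies directly; Beauville's compactified-Jacobian computation then certifies that the cuspidal curve contributes with multiplicity $2$, giving exactly $3198$ trinodal sections. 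In your tacnodal scenario the limiting configuration has only two distinct singular points and therefore lies on the boundary of $J_0$; you would need either a different incidence (e.g.\ in $W\times\check{\mathbb P}^3$) whose smoothness/local irreducibility at the tacnodal plane you must verify by hand, or a careful boundary analysis. You flag this as ``where the work concentrates,'' which is accurate, but note that the paper sidesteps it entirely by choosing the cusp degeneration. Your $2$-transitivity sketch is also more elaborate than necessary: the paper gets it by observing that the fibers of $J'\to J_0$ are again linear subspaces of $W'$, so $J'$ is irreducible; no compatibility-along-a-line analysis is needed.
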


\begin{center}
\sc{Acknowledgements}
\end{center}
\vspace{0.1 in}
I thank my advisor Professor Michael Larsen for his guidance and valuable discussions throughout this work, and in particular, for suggesting the problem. I thank Professor Thomas Dedieu for pointing out a mistake in an earlier version of the paper. I also thank him for his helpful suggestions and comments. Finally, I thank the referee for the careful reading of the manuscript and for pointing out several inaccuracies.

\section{Elliptic K3 surfaces}

If a K3 surface $S$ has genus 1, then it has a line bundle $L$ such that $L^{2}=0$, which gives an elliptic fibration of $S$ over $\mathbb{P}^{1}$. In this paper, we will consider elliptic K3 surfaces with sections. From \cite{Hu16}(Chap. 11), \cite{Mi81} or \cite{Ka77}, we know that each minimal elliptic surface with a section over $\mathbb{P}^{1}$ corresponds to a unique Weierstrass fibration. Since $S$ is a K3 surface, it turns out that the corresponding Weierstrass fibration is a closed subscheme of the $\mathbb{P}^{2}$-bundle $\mathbb{P}(\mathcal{O}_{\mathbb{P}^{1}}(4)\oplus\mathcal{O}_{\mathbb{P}^{1}}(6)\oplus\mathcal{O}_{\mathbb{P}^{1}})$ over $\mathbb{P}^{1}$, which is defined by the Weierstrass equation:
\[
y^{2}z=x^{3}+Axz^{2}+Bz^{3}
\]
where $A\in H^{0}(\mathbb{P}^{1},\mathcal{O}_{\mathbb{P}^{1}}(8))$ and $B\in H^{0}(\mathbb{P}^{1},\mathcal{O}_{\mathbb{P}^{1}}(12))$. Moreover, $\Delta=4A^{3}+27B^{2}\in\mathcal{O}_{\mathbb{P}^{1}}(24)$ is not identically zero, and vanishes at $q\in\mathbb{P}^{1}$ if and only if the fiber is singular. For every $q\in\mathbb{P}^{1}$, either $v_{q}(A)\leq 3$ or $v_{q}(B)\leq 5$. Notice that this is equivalent to the statement that the Weierstrass fibration has only rational double points.

On the other hand, given $A\in H^{0}(\mathbb{P}^{1},\mathcal{O}_{\mathbb{P}^{1}}(8))$ and $B\in H^{0}(\mathbb{P}^{1},\mathcal{O}_{\mathbb{P}^{1}}(12))$ satisfying the above conditions, the Weierstrass equation defines a surface whose minimal desingularization is a K3 surface, which has the elliptic fibration with a section.

Denote the vector space $H^{0}(\mathbb{P}^{1},\mathcal{O}_{\mathbb{P}^{1}}(8))\times H^{0}(\mathbb{P}^{1},\mathcal{O}_{\mathbb{P}^{1}}(12))$ by $V$. Notice that the above conditions are open conditions in $V$. Hence for a generic choice of $A$ and $B$, the discriminant $\Delta$ vanishes at 24 distinct points of $\mathbb{P}^{1}$, which gives 24 nodal rational curves in $|\mathcal{O}(1)|$ on a K3 surface. If we define
\[
Z=\{(A,B,t)\in V\times\mathbb{P}^{1}|4A^{3}(t)+27B^{2}(t)=0\},
\]
then the map $\pi\colon Z\to V$ is generically finite, and our problem is to determine the monodromy group of this map.

Let us first consider the following construction, which can be found in \cite{Ke13}(\S 7).

Let $(a_{1},...,a_{12})\in\mathbb{C}^{12}$ be an ordered tuple of $12$ pairwise distinct points of $\mathbb{C}$, and $K$ be a nonzero complex number. Define $l:=-\sqrt[3]{\frac{27}{4}}\in\mathbb{R}$. Let $x_{0},x_{1}$ be homogeneous coordinates for $\mathbb{P}^{1}$ and let
\[
A=K^{2}lx_{1}^{8}\in H^{0}(\mathbb{P}^{1},\mathcal{O}_{\mathbb{P}^{1}}(8)),
\]
\[
B=\prod_{i=1}^{12}(x_{0}-a_{i}x_{1})+K^{3}x_{1}^{12}\in H^{0}(\mathbb{P}^{1},\mathcal{O}_{\mathbb{P}^{1}}(12)). 
\]
Then $A$ vanishes to order $8$ at $[1:0]$, whereas $B$ does not vanish at $[1:0]$, so the condition that either $\mu_{p}(A)\leq 3$ or $\mu_{p}(B)\leq 5$ is met. On the other hand, we have
\[
\Delta(A,B)=27\prod_{i=1}^{12}(x_{0}-a_{i}x_{1})\left(\prod_{i=1}^{12}(x_{0}-a_{i}x_{1})+2K^{3}x_{1}^{12}\right)\in H^{0}(\mathbb{P}^{1},\mathcal{O}_{\mathbb{P}^{1}}(24)),
\]
which is not identically $0$. For a generic choice of $a_{1},...a_{12}$ and $K$, $\Delta(A,B)$ vanishes at $24$ distinct points. Hence this $(A,B)$ defines a smooth K3 surface with $24$ nodal singular fibers.

\begin{lemma}\label{S12}
The monodromy group $M\subset S_{24}$ of the map $\pi: Z\to V$ contains a subgroup $M_{1}$ which is isomorphic to $S_{12}$. In addition, if we define the embedding $f\colon S_{12}\to S_{24}$ by
\[
f(\sigma)(i)=\begin{cases}
\sigma(i), & \text{if } i\leq 12;\\
\sigma(i-12)+12, & \text{if } i>12,
\end{cases}
\]
then $M_{1}$ is a conjugate of the image of $f$.
\end{lemma}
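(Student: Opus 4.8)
The plan is to use the explicit family constructed just above the lemma. Fix a generic nonzero $K$ and let the $12$ points $a_1,\dots,a_{12}$ vary over the configuration space of ordered $12$-tuples of distinct points of $\mathbb{C}$; this gives a family of pairs $(A,B)\in V$ parametrized by an open subset $W$ of $\mathbb{C}^{12}$, together with an inclusion $W\hookrightarrow V$ (after also fixing $K$). Restricting the covering $\pi\colon Z\to V$ to $W$ produces a covering $Z_W\to W$ whose monodromy group is, by functoriality of monodromy under base change (pullback along $W\hookrightarrow V$ sends loops to loops), a subgroup of $M$. So it suffices to identify the monodromy group of $Z_W\to W$ with a copy of $S_{12}$ sitting inside $S_{24}$ via $f$.

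First I would analyze the fiber of $\pi$ over a point of $W$. From the displayed factorization
\[
\Delta(A,B)=27\prod_{i=1}^{12}(x_0-a_ix_1)\left(\prod_{i=1}^{12}(x_0-a_ix_1)+2K^3x_1^{12}\right),
\]
the $24$ roots split into two groups of $12$: the roots $x_0/x_1=a_i$ coming from the first factor, and the $12$ roots of the second factor $g_a(x_0,x_1):=\prod_i(x_0-a_ix_1)+2K^3x_1^{12}$. For generic $a$ these $24$ points are distinct, so the fiber is identified with the disjoint union of $\{a_1,\dots,a_{12}\}$ and the root set of $g_a$; label the first group $1,\dots,12$ (with $i$ attached to the root $a_i$) and the second group $13,\dots,24$. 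As the $a_i$ move along a loop in $W$, the first $12$ points simply follow the $a_i$, so the induced permutation on $\{1,\dots,12\}$ is exactly the image of the loop class under $\pi_1(W)\to S_{12}$ acting by permuting the labels. The key observation is that the second factor $g_a$ is a symmetric function of the $a_i$ (it depends on them only through $\prod_i(x_0-a_ix_1)$), hence its root set is preserved setwise by the $S_{12}$-action, and the induced permutation on $\{13,\dots,24\}$ is obtained by continuing the roots of $g_a$ along the same loop. Thus $M_1$ is precisely the image of the map $\pi_1(W)\to S_{12}\times S_{12}\hookrightarrow S_{24}$ given by $\gamma\mapsto(\phi_\gamma^{(1)},\phi_\gamma^{(2)})$, and I need to show this image is the graph $\{(\sigma,\sigma)\}$ of an isomorphism, matching the map $f$ in the statement.

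For this the essential point is that the two projections to $S_{12}$ are related by a single permutation of roots. The monodromy of $\pi_1(W)$ on $\{1,\dots,12\}$ is the full $S_{12}$: the configuration space of ordered distinct points has fundamental group the pure braid group, but here the $12$ points $a_i$ are labelled coordinates on $W\subset\mathbb{C}^{12}$ and the monodromy on these labels is visibly all of $S_{12}$ once we allow the unordered configuration — more carefully, I would instead let $a=(a_1,\dots,a_{12})$ range so that the relevant loops realize every transposition $(i\ i{+}1)$ by the standard braid swapping $a_i$ and $a_{i+1}$, which generates $S_{12}$; I must check these loops stay inside the locus where $\Delta$ has $24$ distinct roots, which holds generically. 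For the second group, I claim the monodromy on $\{13,\dots,24\}$ factors through the \emph{same} $S_{12}$: the roots of $g_a$ depend on $a$ only through the elementary symmetric functions $e_k(a)$, so any loop in $W$ that returns $(e_1,\dots,e_{12})$ to their starting values — in particular any pure-braid loop — induces the identity on $\{13,\dots,24\}$. Therefore the monodromy on the second group factors through $\pi_1(W)\to S_{12}$ (the quotient by the pure braid group), i.e. through the Galois group of the splitting field of $g_a$ over $\mathbb{C}(e_1,\dots,e_{12})$; showing this Galois group is all of $S_{12}$ and compatible with the labelling (so that the graph is exactly $f(S_{12})$, up to the conjugation built into the choice of base point) completes the proof.

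The main obstacle I expect is the last step: verifying that the monodromy on the second group of $12$ roots is genuinely the \emph{full} symmetric group and not a proper subgroup, and that it is glued to the first $S_{12}$ via the identity (so that $M_1\cong S_{12}$ rather than $S_{12}\times S_{12}$ or a smaller diagonal-type group). Concretely this amounts to showing the Galois group of $g_a(x_0,1)=\prod_i(x_0-a_i)+2K^3$ over the field $\mathbb{C}(e_1(a),\dots,e_{12}(a))$ is $S_{12}$; equivalently, that this degree-$12$ polynomial in $x_0$ has Galois group $S_{12}$ over the rational function field. One clean way is a specialization/reduction argument: exhibit a single choice of $(a,K)$ for which $g_a$ has $S_{12}$-Galois group (e.g. by reducing modulo a prime or by a direct Eisenstein-type / discriminant computation), since Galois group can only grow under generic perturbation; combined with the braid loops handling the first block and the observation that the two blocks are permuted \emph{simultaneously} by any loop (the labels $13,\dots,24$ being carried along with the $a_i$ through the symmetric dependence), this pins down $M_1$ as the image of $f$ up to conjugacy.
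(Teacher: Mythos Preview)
Your overall strategy is reasonable but has two real gaps, and you have overcomplicated precisely the step you flag as the main obstacle. First, the specialization argument does not work as stated: over $\mathbb{C}$ every specific $g_a$ splits completely, so no single choice of $(a,K)$ has Galois group $S_{12}$; Eisenstein or reduction modulo a prime only controls the Galois group over $\mathbb{Q}(e_1,\dots,e_{12})$, which may strictly contain the geometric monodromy group over $\mathbb{C}(e_1,\dots,e_{12})$, and it is the latter you need. In fact no such device is required: after the harmless substitution $e_{12}'=e_{12}+2K^3$ the polynomial $g_a(x)=x^{12}-e_1x^{11}+\cdots\pm e_{12}'$ is literally the generic monic degree-$12$ polynomial over $\mathbb{C}(e_1,\dots,e_{11},e_{12}')$, whose Galois group is $S_{12}$ by the fundamental theorem of symmetric functions. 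Second, even granting that both projections to $S_{12}$ are surjective, you have not explained why the graph $\{(\sigma,\psi(\sigma))\}$ is a conjugate of the diagonal; this needs the fact that $\psi$ is then an automorphism of $S_{12}$ and that $\mathrm{Out}(S_{12})=1$, which you do not invoke. (There is also a running conflation of ordered and unordered configuration spaces: the map $(a_1,\dots,a_{12})\mapsto(A,B)$ is $S_{12}$-to-one, not an embedding, and $\pi_1$ of the ordered space is the pure braid group, which has no surjection onto $S_{12}$.)

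The paper bypasses all of this with a one-line perturbation argument. Instead of computing any Galois group, it takes $|K|$ very small: then along any path $\gamma(t)=(\gamma_1(t),\dots,\gamma_{12}(t))$ in the parameter space and for each $i$, the polynomial $\prod_j(x_0-\gamma_j(t)x_1)+2K^3x_1^{12}$ has exactly one simple root closer to $\gamma_i(t)$ than to any $\gamma_j(t)$ with $j\neq i$. This gives a continuous bijection between the two blocks of $12$ roots that is preserved along every loop, so the permutation induced on $\{13,\dots,24\}$ agrees with that on $\{1,\dots,12\}$ under this bijection, and one reads off $M_1$ as a conjugate of $f(S_{12})$ directly, with no Galois-theoretic input and no appeal to $\mathrm{Out}(S_{12})=1$.
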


\begin{proof}
Fix a tuple $(a_{1},...,a_{12})\in\mathbb{C}^{12}$ and a $K\in\mathbb{C}$ in the above discussion, which gives a point $P\in V$. Let $Y\subset\mathbb{C}^{12}$ be the open subvariety consisting of pairwise dinstinct tuples $(c_{i})$ such that $(c_{i})$ and $K$ define a smooth K3 surface with $24$ nodal singular fibers. Fix an element $\sigma\in S_{12}$. Since $Y$ is irreducible, and hence connected, there is a path $\gamma(t)\in Y$ connecting $(a_{1},...,a_{12})$ and $(\sigma(a_{1}),...,\sigma(a_{12}))$. This path $\gamma(t)$ then gives a loop in $V$ at the point $P$. Let $\gamma(t)=(\gamma_{1}(t),...,\gamma_{12}(t))$. When $|K|$ is very small, for every value of $t$ and every $i$ the polynomial $\prod_{i=1}^{12}(x_{0}-\gamma_{i}(t)x_{1})+2K^{3}x_{1}^{12}$ will have exactly one simple root that is closer to $\gamma_{i}(t)$ than to $\gamma_{j}(t)$ for $j\neq i$. Therefore the monodromy group $M$ contains a subgroup $M_{1}$, which is a conjugate of Im$(f)$ by the expression of $\Delta(A,B)$. 
\end{proof}

Now we consider another similar construction as above. Let $(b_{1},...,b_{8})\in\mathbb{C}^{8}$ be an ordered tuple of $8$ pairwise distinct points of $\mathbb{C}$, and $K$ be a nonzero complex number. Define $l:=\sqrt{-\frac{4}{27}}\in\mathbb{C}$. Let $x_{0},x_{1}$ be homogeneous coordinates for $\mathbb{P}^{1}$ and let
\[
A=\prod_{i=1}^{8}(x_{0}-b_{i}x_{1})+K^{2}x_{1}^{8}\in H^{0}(\mathbb{P}^{1},\mathcal{O}_{\mathbb{P}^{1}}(8)). 
\]
\[
B=K^{3}lx_{1}^{12}\in H^{0}(\mathbb{P}^{1},\mathcal{O}_{\mathbb{P}^{1}}(12)),
\]
Then $B$ vanishes to order $12$ at $[1:0]$, whereas $A$ does not vanish at $[1:0]$, so the condition that either $\mu_{p}(A)\leq 3$ or $\mu_{p}(B)\leq 5$ is met. On the other hand, if we denote $\prod_{i=1}^{8}(x_{0}-b_{i}x_{1})$ by $\beta$, we have
\[
\Delta(A,B)=4\beta(\beta^{2}+3\beta K^{2}x_{1}^{8}+3K^{4}x_{1}^{16})\in H^{0}(\mathbb{P}^{1},\mathcal{O}_{\mathbb{P}^{1}}(24)),
\]
which is not identically $0$. For a generic choice of $b_{1},...b_{8}$ and $K$, $\Delta(A,B)$ vanishes at $24$ distinct points. Hence this $(A,B)$ defines a smooth K3 surface with $24$ nodal singular fibers.

\begin{lemma}
The monodromy group $M\subset S_{24}$ of the map $\pi\colon Z\to V$ contains a subgroup $M_{2}$ which is isomorphic to $S_{8}$. In addition, if we define the embedding $g:S_{8}\to S_{24}$ by
\[
g(\sigma)(i)=\begin{cases}
\sigma(i), & \text{if } i\leq 8;\\
\sigma(i-8)+8, & \text{if } 8<i\leq16;\\
\sigma(i-16)+16, & \text{if } i>16,
\end{cases}
\]
then $M_{2}$ is a conjugate of the image of $g$.
\end{lemma}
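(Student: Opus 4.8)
The plan is to run the argument in the proof of Lemma \ref{S12}, with the $24$ fibers now organized into three ``layers'' of size $8$ rather than two halves of size $12$. First I would describe the root structure of $\Delta(A,B)=4\beta(\beta^{2}+3\beta K^{2}x_{1}^{8}+3K^{4}x_{1}^{16})$ for small $|K|$. Viewing the second factor as a quadratic in $\beta$, it factors as $(\beta-c_{+}K^{2}x_{1}^{8})(\beta-c_{-}K^{2}x_{1}^{8})$ with $c_{\pm}=\tfrac{-3\pm\sqrt{-3}}{2}$, two distinct nonzero complex numbers. Since $\Delta$ does not vanish at $[1:0]$ (as $\beta([1:0])=1$), all $24$ zeros of $\Delta$ lie in the chart $x_{1}=1$, where, writing $u=x_{0}$, they are the $8$ simple roots of $\beta(u)=\prod_{i=1}^{8}(u-b_{i})$, the $8$ roots of $\beta(u)=c_{+}K^{2}$, and the $8$ roots of $\beta(u)=c_{-}K^{2}$. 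Because the $b_{i}$ are simple roots of $\beta$, for $|K|$ small each of the last two equations has exactly one root near each $b_{i}$, and all $24$ roots are distinct; thus they fall into $8$ triples, one near each $b_{i}$, and the three layers $\{\beta=0\}$, $\{\beta=c_{+}K^{2}\}$, $\{\beta=c_{-}K^{2}\}$ are globally well defined because $0,c_{+},c_{-}$ are fixed distinct numbers.

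Next, fix a tuple $(b_{1},\dots,b_{8})$ of pairwise distinct points. For each $\sigma\in S_{8}$, choose a path $\gamma(t)=(\gamma_{1}(t),\dots,\gamma_{8}(t))$ through pairwise distinct tuples from $(b_{1},\dots,b_{8})$ to $(b_{\sigma(1)},\dots,b_{\sigma(8)})$; such a path exists because the configuration space of ordered distinct $8$-tuples is connected. As $S_{8}$ is finite, I can then pick a single $K\neq 0$ with $|K|$ small enough that, for every $\sigma$ and every $t$, the pair $(\gamma(t),K)$ defines a smooth K3 surface with $24$ nodal fibers (which here is just the condition that $\Delta$ has $24$ distinct roots, the rational double point condition being automatic as in the construction) and the clustering above holds; this is a compactness argument, since each $\gamma([0,1])$ is compact and the zeros of $\Delta$ vary continuously, degenerating to the triple roots of $4\beta^{3}$ as $K\to 0$. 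Since $B=K^{3}lx_{1}^{12}$ is independent of the $b_{i}$ and $A=\beta+K^{2}x_{1}^{8}$ depends only on $\beta$, which is symmetric in the $b_{i}$, the loop $t\mapsto(A,B)$ based at the point $P\in V$ coming from $(b_{i})$ is genuinely closed. Lifting this loop and following the roots, the root of $\beta$ sitting at $\gamma_{i}(t)$, the unique root of $\beta=c_{+}K^{2}$ near $\gamma_{i}(t)$, and the unique root of $\beta=c_{-}K^{2}$ near $\gamma_{i}(t)$ all travel from near $b_{i}$ to near $b_{\sigma(i)}$, and the three layers are never interchanged; hence the induced permutation acts on each layer by $\sigma$ simultaneously. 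Labeling the fibers so that the three layers are $\{1,\dots,8\}$, $\{9,\dots,16\}$, $\{17,\dots,24\}$ with index $i$ recording the nearby $b_{i}$, this permutation is exactly $g(\sigma)$. Letting $\sigma$ range over $S_{8}$ shows that $M$ contains a conjugate $M_{2}$ of $\operatorname{Im}(g)\cong S_{8}$.

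The one step requiring care, the exact analogue of the delicate point in Lemma \ref{S12}, is the uniform clustering: one must choose $|K|$ small relative to the finitely many chosen paths so that throughout each loop the $16$ perturbed roots stay in their $8$ clusters, never collide with one another or with the roots of $\beta$, and never leave the chart $x_{1}=1$. This is routine given that $c_{+}$ and $c_{-}$ are fixed, distinct, and nonzero, so I would not expect any genuine difficulty beyond the bookkeeping.
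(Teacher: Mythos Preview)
Your argument is correct and is essentially the same as the paper's: factor $\beta^{2}+3\beta K^{2}x_{1}^{8}+3K^{4}x_{1}^{16}$ into two linear factors in $\beta$, observe that for small $|K|$ the $24$ roots of $\Delta$ cluster into $8$ triples near the $b_{i}$, and then permute the $b_{i}$ along paths in the ordered configuration space to realize the diagonal $S_{8}$. The only difference is the order in which you make the choices---you fix the finitely many paths first and then pick $|K|$ small enough uniformly by compactness, whereas the paper nominally fixes $K$ first and appeals to the irreducibility of the open locus $Y\subset\mathbb{C}^{8}$---but this is an expository rather than a mathematical difference.
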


\begin{proof}
Fix a tuple $(b_{1},...,b_{8})\in\mathbb{C}^{8}$ and a $K\in\mathbb{C}$ in the above discussion, which gives a point $P\in V$. Let $Y\subset\mathbb{C}^{8}$ be the open subvariety consisting of pairwise dinstinct tuples $(c_{i})$ such that $(c_{i})$ and $K$ define a smooth K3 surface with $24$ nodal singular fibers. Fix an element $\sigma\in S_{8}$. Since $Y$ is irreducible, and hence connected, there is a path $\gamma(t)\in Y$ connecting $(b_{1},...,b_{8})$ and $(\sigma(b_{1}),...,\sigma(b_{8}))$. This path $\gamma(t)$ then gives a loop in $V$ at the point $P$. Let $\gamma(t)=(\gamma_{1}(t),...,\gamma_{8}(t))$. Notice that $\beta^{2}+3\beta K^{2}x_{1}^{8}+3K^{4}x_{1}^{16}=(\beta+\frac{3+\sqrt{-3}}{2}K^{2}x_{1}^{8})(\beta+\frac{3-\sqrt{-3}}{2}K^{2}x_{1}^{8})$. Let $\beta(t)=\prod_{i=1}^{8}(x_{0}-\gamma_{i}(t)x_{1})$. When $|K|$ is very small, for every value of $t$ and every $i$ the polynomials $\beta(t)+\frac{3+\sqrt{-3}}{2}K^{2}x_{1}^{8}$ and $\beta(t)+\frac{3-\sqrt{-3}}{2}K^{2}x_{1}^{8}$ will have exactly one simple root that is closer to $\gamma_{i}(t)$ than to $\gamma_{j}(t)$ for $j\neq i$. Therefore the monodromy group $M$ contains a subgroup $M_{2}$, which is a conjugate of Im$(g)$ by the expression of $\Delta(A,B)$. 
\end{proof}

A permutation group $G$ acting on a non-empty finite set $X$ is called \emph{primitive} if $G$ acts transitively on $X$ and $G$ preserves no nontrivial partition of $X$.

\begin{lemma}
The monodromy group $M\subset S_{24}$ of the map $\pi\colon Z\to V$ is primitive.
\end{lemma}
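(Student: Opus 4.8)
The plan is to leverage the two subgroups $M_{1}\cong S_{12}$ and $M_{2}\cong S_{8}$ of $M$ furnished by the previous two lemmas, using that each is a \emph{conjugate} of an explicit ``diagonal'' embedding ($\operatorname{Im}(f)$, respectively $\operatorname{Im}(g)$) with a prescribed orbit structure. Since conjugation is a permutation isomorphism, the set of block sizes occurring in invariant partitions is the same for $M_{i}$ as for the model subgroup, and it is this conjugacy-invariant datum that I would work with, precisely because the conjugating elements are not under our control.

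First I would dispose of transitivity (which primitivity includes). Any orbit $C$ of $\langle M_{1},M_{2}\rangle$ is simultaneously a union of $M_{1}$-orbits and a union of $M_{2}$-orbits; since $M_{1}$ has two orbits of size $12$ and $M_{2}$ has three orbits of size $8$, a nonempty such $C$ satisfies $|C|\in\{12,24\}\cap\{8,16,24\}=\{24\}$. Hence $\langle M_{1},M_{2}\rangle$, and a fortiori $M$, is transitive on the $24$ curves. Consequently any nontrivial $M$-invariant partition has all blocks of one size $d$ with $d\mid 24$, and it is invariant under both $M_{1}$ and $M_{2}$.

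The core of the argument is the computation of which equal-block partitions of $\{1,\dots,24\}$ are invariant under $\operatorname{Im}(f)$ and under $\operatorname{Im}(g)$. For $\operatorname{Im}(f)$: let $\mathcal{P}$ be invariant with blocks of size $d$, let $B_{0}$ be the block containing $1$, and put $S=B_{0}\cap\{1,\dots,12\}$ and $T=B_{0}\cap\{13,\dots,24\}$. Applying the image under $f$ of a transposition $(a\,b)$ with $a\in S$, $b\notin S$, one checks that if $1<|S|<12$ then this element moves $B_{0}$ while keeping it meeting itself, contradicting the block property; so $|S|\in\{1,12\}$, and likewise $|T|\in\{0,1,12\}$. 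The case $|S|=12$ gives $d\ge 12$, hence $d=24$ or $\mathcal{P}=\{\{1,\dots,12\},\{13,\dots,24\}\}$; the case $|S|=1$ gives $d=|S|+|T|\in\{1,2,13\}$, and $13\nmid 24$, while $d=2$ forces (by a point-stabilizer argument) $B_{0}=\{1,13\}$ and $\mathcal{P}$ equal to the partition into the pairs $\{j,j+12\}$. Thus the possible block sizes for $\operatorname{Im}(f)$ lie in $\{1,2,12,24\}$. The same method applied to $\operatorname{Im}(g)$ gives $|B_{0}\cap B_{j}|\in\{0,1,8\}$ for each of the three orbits $B_{j}$, whence $d\in\{1,2,3,8,24\}$; and $d=2$ is then excluded, because the resulting pairing would saturate only two of the three orbits, leaving the third to be split into pairs by a copy of $S_{8}$ acting naturally, which is impossible since $S_{8}$ is primitive on $8$ letters. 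So the possible block sizes for $\operatorname{Im}(g)$ lie in $\{1,3,8,24\}$. An $M$-invariant partition therefore has $d\in\{1,2,12,24\}\cap\{1,3,8,24\}=\{1,24\}$, i.e.\ it is trivial, and $M$ is primitive.

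I expect the main obstacle to be organizational rather than conceptual: one must stay disciplined about phrasing every assertion in conjugacy-invariant terms, since $M_{1}$ and $M_{2}$ are known only up to conjugacy by possibly unrelated elements of $S_{24}$; and one must remember that an invariant partition of the non-transitive groups $\operatorname{Im}(f)$, $\operatorname{Im}(g)$ need not a priori have equal-sized blocks, a gap that closes only because the ambient group $M$ has already been shown to be transitive.
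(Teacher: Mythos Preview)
Your proof is correct and follows essentially the same strategy as the paper's: both establish transitivity from the orbit structures of $M_1$ and $M_2$, then use these subgroups to rule out nontrivial invariant partitions. The execution differs only in organization. The paper first conjugates so that $M_1=\operatorname{Im}(f)$ exactly (leaving $M_2$ an arbitrary conjugate of $\operatorname{Im}(g)$), narrows the possibilities down to the two specific partitions $\{\{1,\dots,12\},\{13,\dots,24\}\}$ and $\{\{j,j+12\}\}_{j}$ using $M_1$, and then eliminates each of these ad hoc using the three size-$8$ orbits of $M_2$. You instead work symmetrically and entirely with conjugacy-invariant data: you compute the admissible block sizes for $\operatorname{Im}(f)$-invariant equal-size partitions ($\{1,2,12,24\}$) and for $\operatorname{Im}(g)$-invariant ones ($\{1,3,8,24\}$), and intersect. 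Your transitivity argument via $|C|\in\{12,24\}\cap\{8,16,24\}$ is cleaner than the paper's path-chasing, and your explicit insistence on phrasing everything in conjugacy-invariant terms closes a point the paper leaves somewhat implicit (namely that the conjugating elements for $M_1$ and $M_2$ are unrelated). The paper's route is marginally shorter because it only has to kill two concrete partitions rather than enumerate all block sizes for $\operatorname{Im}(g)$; your route is more systematic and would adapt more readily to other degrees.
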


\begin{proof}
To prove the transitivity, we can assume $M$ actually contains $f(S_{12})$ and a conjugate of $g(S_{8})$. Then we have two $S_{12}$ orbits $A_{1}=\{1,2,...,12\}$, $A_{2}=\{13,14,...,24\}$ and three $S_{8}$ orbits $B_{1},B_{2},B_{3}$. For any two elements $a,b\in\{1,...,24\}$, we want to prove that there exists an element $\sigma\in M$ such that $\sigma(a)=b$. If $a$ and $b$ both lie in $A_{1}$ or both lie in $A_{2}$, then such an element exists. Now suppose $a$ lies in $A_{1}$ and $b$ lies in $A_{2}$. Since each of the orbits $B_{1},B_{2},B_{3}$ has cardinality $8$, there must be an orbit, say $B_{1}$, which contains an element $c\in A_{1}$ and an element $d\in A_{2}$. Then there are elements in $M$ which send $a$ to $c$, $c$ to $d$, and $d$ to $b$.

Giving a partition is the same as giving an equivalence relation, and each equivalence class is a piece of the partition. Suppose now we have an equivalence relation on $\{1,2,...,24\}$. If $A_{1}$ contains two elements $a,b$ in the same class and an element $c$ in a different class, then there is an element in $M$ which interchanges $b,c$ and keeps $a$ unchanged. We deduce that $M$ does not preserve the relation. A similar argument works for $A_{2}$. On the other hand, we know that if a transitive action preserves a partition, then all parts have the same size. Hence without loss of generality, we only have the following two possible equivalence relations which may be preserved by $M$.
\begin{gather*}
\{1,2,...,12\}\{13,14,..,24\},\\
\{1,13\},\{2,14\},...,\{12,24\}.
\end{gather*}
For the first possiblity, since there is an orbit, say $B_{1}$, which contains an element $a\in \{1,2,...,12\}$ and an element $b\in \{13,14,...,24\}$, there is an element $\sigma\in M_{2}$ which interchange $a$ and $b$. Notice that we can always choose a $\sigma$ such that there is another element $c\in \{1,2,...,24\}$ which is fixed by $\sigma$. Hence $M$ does not preserve the relation.

Now we consider the second possibility. Each of the three $S_{8}$ orbits $B_{1},B_{2},B_{3}$ cannot contain a whole class. Otherwise, it will contain two elements in the same class and another element in the different class. Hence $B_{1}$ consists of $8$ elements coming from $8$ different classes. Suppose the relation is preserved by $M$. Then the rest of the $8$ elements in those $8$ classes must form an $S_{8}$ orbit, say $B_{2}$. So the remaining $4$ classes form the last orbit $B_{3}$, which is impossible.

Hence $M$ preserves no nontrivial partition of $\{1,2,...,24\}$ and the action is transitive, which implies that $M$ is primitive. 
\end{proof}

Since we have the classification of the finite group acting primitively on $24$ elements, we are able to determine the group $M$.

\noindent{\bf Proof of Theorem \ref{g1}.} By \cite[Table 1]{BL96}, we know that there are only $5$ groups that act primitively on $24$ elements: $PSL_{2}(23), PGL_{2}(23), M_{24}, A_{24}$ and $S_{24}$. Since $M$ contains $S_{12}$, $|M|>12!$. Hence $M$ can only be $A_{24}$ or $S_{24}$. But since $M$ contains a conjugate of $g(S_{8})$, it will contain some elements not in $A_{24}$. Hence $M=S_{24}$. $\square$

\noindent{\bf Proof of Corollary \ref{c1}.} Consider the map $\pi\colon Z\to V$ such that both the varieties and the map are defined over $F$. We have the following map.
\[
K(V)\to K(Z)\to K(\tilde{Z}),
\]
where $K(V),K(Z)$ are the function fields of $V$ and $Z$, and $K(\tilde{Z})$ is the Galois closure of $K(Z)/K(V)$ in an algebraic closure, which can be realized as the function field of a Galois cover $\tilde{Z}$ of the variety $V$.
  
By \cite[$\S$I.]{Ha79} and $V$ being normal, we know that the monodromy group $M$ is a subgroup of the Galois group $Gal(K(\tilde{Z})/K(V))\subset S_{24}$. But we have $M=S_{24}$ by Theorem \ref{g1}. Hence we deduce that $Gal(K(\tilde{Z})/K(V))=S_{24}$, and $K(Z)$ is a degree $24$ non-Galois field extension of $K(V)$. By the Hilbert irreducibility theorem, for the $F$-valued point $x$ in a Zariski-dense subset of $V(F)$, the residue field extension $F\cong K(x)\to K(Z_{x})$ is also a degree $24$ non-Galois field extension. Hence for every elliptic K3 surface over $F$ in this subset, each of the $24$ nodal rational curves is defined over a degree $24$ non-Galois field extension of $F$. $\square$

\section{Double covers of the projective plane}

If a K3 surface $S$ has genus 2, then it has a line bundle $L$ such that $L^{2}=2$, which gives a degree 2 morphism from $S$ to $\mathbb{P}^{2}$ branching over a sextic curve \cite[Chap.2, Remark 2.4]{Hu16}. For a generic $S$, it will branch over a smooth sextic curve $C$, and the 324 nodal rational curves in $|L|=|\mathcal{O}(1)|$ are exactly the preimages of the 324 bitangents of the curve $C$ by the Riemann-Hurwitz formula.  Now we let $W$ be the linear system of plane curves of degree 6, $\Delta\subset\mathbb{P}^{2}\times\mathbb{P}^{2}$ the diagonal and $J_{0}=(\mathbb{P}^{2}\times\mathbb{P}^{2})\backslash\Delta$, and let $J\subset W\times J_{0}$ be the incidence correspondence
\[
J=\{(C, (p_{1},p_{2}))|m_{p_{i}}(C\cdot\overline{p_{1}p_{2}})\geq 2\},
\]
where $m_{p}$ is the intersection number at $p$. We want to study the projection morphism $\pi\colon J\to W$ and ask for the monodromy group $M$ of the bitangents. This is solved by Harris in the following theorem.

\begin{theorem}\label{Harris}
\cite[II.5.]{Ha79} The monodromy group $M$ on the bitangents of a generic curve $C$ of degree $d\geq 5$ is the full symmetric group.
\end{theorem}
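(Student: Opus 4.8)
The plan is to follow the general template for monodromy of enumerative problems: prove that the monodromy group $M\subseteq S_b$ acting on the $b=\tfrac12 d(d-2)(d-3)(d+3)$ bitangents is $2$-transitive, produce a transposition in $M$, and invoke Jordan's theorem that a primitive subgroup of $S_b$ containing a transposition is all of $S_b$. Throughout, $\mathcal B\subset W\times(\mathbb P^2)^{*}$ denotes the universal family of bitangent \emph{lines}, i.e. the image of $J$ under $(C,(p_1,p_2))\mapsto(C,\overline{p_1p_2})$, a covering of $W$ of degree $b$ whose monodromy group is $M$.

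First I would reprove transitivity of $M$, which is the irreducibility of $J$: the projection $J\to J_0$, $(C,(p_1,p_2))\mapsto(p_1,p_2)$, is surjective, and its fibre over a fixed pair is the linear system of degree-$d$ plane curves meeting the line $\overline{p_1p_2}$ to order $\ge 2$ at each of $p_1$ and $p_2$, a projective subspace of $W$ of codimension $4$; since $J_0$ and all the fibres are irreducible, so is $J$, hence so is $\mathcal B$. For $2$-transitivity I would run the same argument one level up. Let $\mathcal L_2\subset(\mathbb P^2)^4$ be the irreducible open locus of ordered $4$-tuples $(p_1,\dots,p_4)$ with $p_1\ne p_2$, $p_3\ne p_4$ and $\overline{p_1p_2}\ne\overline{p_3p_4}$, and let $\widetilde J\to\mathcal L_2$ be the incidence variety of curves having contact $\ge 2$ at each $p_i$ with the corresponding line; its fibres are projective subspaces of $W$ of codimension $8$, nonempty because $\dim W=\binom{d+2}{2}-1\ge 8$, and a Bertini-type argument shows the general member is a smooth curve for which both lines are ordinary bitangents. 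Hence $\widetilde J$ is irreducible; it dominates, with irreducible fibres, the variety of pairs $(C,(\ell,\ell'))$ of ordered distinct bitangent lines, so that variety is irreducible, $M$ is $2$-transitive, and in particular primitive.

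The heart of the proof is to locate a branch divisor of $\mathcal B\to W$ over which the local monodromy is a transposition. I would use the divisor $D\subset W$ given by the closure of the locus of smooth curves $C_0$ carrying exactly one \emph{flex-bitangent}: a line $\ell_0$ meeting $C_0$ with contact pattern $(3,2,1,\dots,1)$, i.e. tangent to $C_0$ at an ordinary flex $p_1$ and simply tangent at a second point $p_2$, with $C_0$ otherwise generic. Near $(C_0,\ell_0)$ the bitangents of a small deformation $C_t$ are the intersection points, in a neighbourhood of $[\ell_0]$ in $(\mathbb P^2)^{*}$, of the local branch of the dual curve of $C_t$ at $p_1$ with that at $p_2$. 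The first branch is an analytic cuspidal arc whose cusp is the flex-tangent of $C_t$ near $p_1$, and which at $t=0$ has its cusp exactly at $[\ell_0]$; the second is a smooth analytic arc through $[\ell_0]$ at $t=0$, and a Gauss-map computation shows it is transverse to the cuspidal tangent direction. Therefore at $t=0$ the two arcs meet at $[\ell_0]$ with intersection multiplicity $2$, so two bitangents of $C_t$ degenerate to $\ell_0$; writing the local intersection equation in the form $(\lambda u-\nu t+\cdots)^2=(u-\mu t+\cdots)^3$ shows its two small roots differ by a term of order $t^{1/2}$, hence are interchanged by a small loop in $W$ around a general point of $D$, while the remaining $b-2$ bitangents return to themselves. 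Thus $M$ contains a transposition.

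It remains to check that $D$ is a genuine nonempty divisor in $W$ whose general point is as generic as the previous step requires, and then to conclude. Counting parameters for a flex-bitangent configuration — $p_1$ (two), the flex tangent $\ell_0\ni p_1$ (one), $p_2\in\ell_0$ (one), against three linear conditions imposing contact $\ge 3$ at $p_1$ and two imposing contact $\ge 2$ at $p_2$ — gives $\dim D=\dim W-1$, and the same count shows $D$ is nonempty precisely when the contact pattern $(3,2)$ fits into degree $d$, i.e. for $d\ge 5$; this is exactly where the hypothesis $d\ge 5$ is forced, consistent with the known exceptional behaviour at $d=4$. A further dimension count shows that for a general point of $D$ the curve $C_0$ is smooth, $\ell_0$ is tangent at no third point, the flex at $p_1$ is ordinary, and all other bitangents are ordinary, so a small loop in $W$ around this point lies in the open set over which $\mathcal B\to W$ is an unramified covering and contributes the transposition produced above. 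Since $M$ is primitive and contains a transposition, Jordan's theorem gives $M=S_b$. The delicate point of the whole argument is the local analysis of the preceding paragraph: one must show the two colliding bitangents ramify with index exactly $2$ — not that the loop acts trivially, and not that a third bitangent collides at the same time or that the ramification is higher. This rests on the two facts that the dual of a smooth plane curve at an ordinary flex is an ordinary cusp and that the second tangency arc is transverse to that cusp's tangent; verifying these, together with the genericity statements above, is where the real work lies.
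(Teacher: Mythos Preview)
Your proposal is correct and follows essentially the same strategy that the paper sketches (the paper does not give its own proof of this theorem but cites Harris and reviews his argument): establish double transitivity via irreducibility of incidence correspondences, then produce a simple transposition by degenerating to a smooth curve with a single flex-bitangent, and conclude. The only minor difference is in how the transposition is extracted: the paper's review invokes Lemma~\ref{lemma} (a fibre with exactly $n-1$ points plus local irreducibility forces a transposition) together with the Pl\"ucker count showing a simple flex-bitangent drops the bitangent number by exactly one, whereas you carry out the equivalent local analysis directly on the dual curve, intersecting the cuspidal branch coming from the ordinary flex with the smooth branch coming from the second tangency. Both routes encode the same geometry and yield the same conclusion.
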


\noindent{\bf Proof of Theorem \ref{g2}.} By the above discussion and Theorem \ref{Harris}, we deduce that the monodromy group $M$ is the full symmetric group $S_{324}$. $\square$

\noindent{\bf Proof of Corollary \ref{c2}.} The proof is similar to that of Corollary \ref{c1} using the Hilbert irreducibility theorem. $\square$

Before we move on to the next section, let us briefly review the idea in the proof of Theorem \ref{Harris} since we will be using the same strategy when we prove the genus 3 case. The argument has two steps \cite[II.3.]{Ha79}. The first step is to prove $M$ is doubly transitive, and the second step is to prove $M$ contains a simple transposition. Then it will follow that $M$ contains all the simple transpositions and $M$ is the full symmetric group.

The proof of the first step uses the irreducibility of certain correspondence spaces, which is straightforward. However, the proof of the second step relies on the following lemma.

\begin{lemma}\label{lemma}
\cite[II.3.]{Ha79} Let $\pi\colon Y\to X$ be a holomorphic map of degree $n$. If there exists a point $p\in X$ such that the fiber of $Y$ over $p$ consists of exactly $n-1$ distinct points (i.e. $n-2$ simple points $q_{1},\ldots,q_{n-2}$ and one double point $q_{n-1}$) and if $Y$ is locally irreducible at $q_{n-1}$, then the monodromy group $M$ of $\pi$ contains a simple transposition.
\end{lemma}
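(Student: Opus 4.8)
The plan is to reduce the statement to a purely local analysis near the fibre $\pi^{-1}(p)$ and to show that local irreducibility at the double point $q_{n-1}$ forces a genuine branch point of the form $z\mapsto z^{2}$, which in turn produces the desired transposition. First I would localize: since $\pi^{-1}(p)$ is finite I may, after replacing $X$ by a small neighbourhood of $p$, assume $\pi$ is finite. For a sufficiently small polydisc $B\ni p$ the preimage then decomposes as $\pi^{-1}(B)=B_{1}\sqcup\cdots\sqcup B_{n-2}\sqcup B'$, where each $B_{i}$ is a neighbourhood of the simple point $q_{i}$ mapped biholomorphically onto $B$, and $\pi|_{B'}\colon B'\to B$ is finite of degree $2$. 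The hypothesis enters precisely here: since $Y$ is locally irreducible at $q_{n-1}$, the germ $(Y,q_{n-1})$ is irreducible, so after shrinking $B'$ is connected, indeed irreducible.

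Next I would locate the ramification. Let $D\subset B$ be the discriminant of $\pi|_{B'}$, i.e.\ the set of points of $B$ over which $\pi|_{B'}$ fails to be a local biholomorphism. Because $B'$ is reduced, irreducible and generically $2$-to-$1$ over $B$, the set $D$ is a proper analytic subset of $B$; moreover $D\neq\varnothing$, since a connected unramified double cover of the simply connected polydisc $B$ cannot exist. The trivial sheets $B_{i}$ carry no ramification, so $\pi$ is finite \'etale of degree $n$ over $B\setminus D$. Consequently, if $S\subset X$ denotes the global branch locus of $\pi$ and $U=X\setminus S$ the open set on which the monodromy group $M$ is computed, then $S\cap B=D$, and in particular $B\setminus D\subset U$.

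Finally I would extract the transposition. Over $B\setminus D$ the covering $\pi^{-1}(B\setminus D)\to B\setminus D$ is the disjoint union of the $n-2$ trivial sheets and the connected degree-$2$ covering $B'\setminus\pi^{-1}(D)\to B\setminus D$ (connected because $B'$ is irreducible and $\pi^{-1}(D)\cap B'$ is a proper analytic subset). A connected covering of degree $2$ has monodromy all of $S_{2}$, so there is a loop $\delta$ in $B\setminus D\subset U$, based at a point $p'\in U\cap B$, whose monodromy permutation interchanges the two points of the fibre lying in $B'$ and fixes each of the $n-2$ points lying in the $B_{i}$. Thus $\phi_{\delta}$ is a simple transposition in the monodromy group computed at $p'$; since $U$ is connected this group is conjugate in $S_{n}$ to $M$, and a conjugate of a transposition is again a transposition, so $M$ contains a simple transposition.

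The step requiring the most care is the localization: one must verify that, after shrinking, $\pi^{-1}(B)$ really does split off a single connected degree-$2$ component $B'$ and that the global branch locus meets $B$ in exactly $\operatorname{disc}(B'/B)$, so that the $S_{2}$-monodromy loop of the local double cover is an honest loop inside $U$. Everything after that is formal. Conceptually, it is local irreducibility at $q_{n-1}$ that does all the work: if the germ $(Y,q_{n-1})$ were a node with two smooth branches, each branch would map biholomorphically onto $B$, there would be no ramification over $B$ at all, and no transposition would appear in $M$.
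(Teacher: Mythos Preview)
The paper does not give its own proof of this lemma; it simply quotes the statement from \cite[II.3.]{Ha79} and uses it as a black box. So there is no in-paper argument to compare against.

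Your proposal is correct and is essentially Harris's original argument: localize at the special fibre, use the finiteness of $\pi$ to split $\pi^{-1}(B)$ into $n-2$ schlicht sheets and one connected degree-$2$ piece $B'$ (connectedness being exactly the content of local irreducibility at $q_{n-1}$), observe that a connected double cover of a polydisc must ramify, and read off the transposition from the monodromy of $B'\setminus\pi^{-1}(D)\to B\setminus D$. Your remark that the key verification is $S\cap B=D$, so that the local $S_{2}$-loop genuinely lies in the global unramified locus $U$, is well placed; this follows because the $n-2$ trivial sheets contribute nothing to the branch locus over $B$. The closing comment contrasting with the nodal case is also apt and explains why the local irreducibility hypothesis cannot be dropped.
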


By the Pl\"ucker formulas, a smooth plane curve $C$ will have exactly one fewer bitangent than the generic case if it possesses a simple flex bitangent, which is a bitangent $\overline{p_{1}p_{2}}$ with $m_{p_{1}}(C\cdot\overline{p_{1}p_{2}})=2$ and $m_{p_{2}}(C\cdot\overline{p_{1}p_{2}})=3$ meeting transversely away from $p_{1}$ and $p_{2}$, and no other lines $l$ with $m_{C}(l)\geq 3$, where
\[
m_{C}(l):=\sum_{q}(m_{q}(C\cdot l)-1)
\]
and the sum is taken over all the tangent points of $l$ with $C$.

Hence the last part of the argument is to prove the existence of such a curve, namely, a smooth sextic curve with 322 regular bitangents and 1 simple flex bitangent. In other words, the simple flex bitangent appears when exactly two bitangents coincide. This is done by dimension arguments regarding certain correspondence spaces.

\section{Smooth quartic surfaces in $\mathbb{P}^{3}$}

If a K3 surface has genus 3, then it has a line bundle $L$ such that $L^{2}=4$, which gives a degree 1 morphism from the surface to a quartic surface in $\mathbb{P}^{3}$. Now if we fix a quartic surface $S$ in $\mathbb{P}^{3}$, then the curves in $|\mathcal{O}(1)|$ are exactly the intersections of the planes in $\mathbb{P}^{3}$ with $S$. For a generic smooth quartic surface, the 3200 nodal rational curves in $|\mathcal{O}(1)|$ are the intersections of $S$ with those planes which are tangent to $S$ at three points. Now we let $W$ be the linear system of quartic surfaces in $\mathbb{P}^{3}$, $D\subset\mathbb{P}^{3}\times\mathbb{P}^{3}\times\mathbb{P}^{3}$ be the closed subset that consists of the triples in the fat diagonal and those collinear points. Let $J_{0}=(\mathbb{P}^{3}\times\mathbb{P}^{3}\times\mathbb{P}^{3})\backslash D$, and let $J\subset W\times J_{0}$ be the incidence correspondence
\[
J=\{(S, (p_{1},p_{2},p_{3}))|S \text{ is singular at }p_{i} \text{ or }T_{p_{i}}S=\overline{p_{1}p_{2}p_{3}}\text{ for }i=1,2,3\},
\]
where $T_{p_{i}}S$ is the projective tangent plane of $S$ at $p_{i}$, and $\overline{p_{1}p_{2}p_{3}}$ is the projective plane passing through the points $p_{1}, p_{2}, p_{3}$. We want to study the projection morphism $\pi\colon J\to W$ and ask for the monodromy group $M$ of those projective planes.

As is explained in the preceding section, we first need to prove $M$ is doubly transitive.

\begin{lemma}\label{doubly}
The monodromy group $M$ is doubly transitive.
\end{lemma}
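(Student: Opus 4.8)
The plan is to follow Harris's strategy for double transitivity of monodromy groups of enumerative problems, adapted to the tritangent-plane setup. The key point is that double transitivity of $M$ follows from the irreducibility of a suitable ``double incidence'' correspondence. Concretely, I would introduce the space
\[
\widetilde{J}=\{(S,(p_1,p_2,p_3),(p_1',p_2',p_3'))\in W\times J_0\times J_0\mid (S,(p_i))\in J,\ (S,(p_i'))\in J,\ \{p_i\}\neq\{p_i'\}\},
\]
i.e. pairs of \emph{distinct} tritangent planes to the same quartic $S$. The monodromy group $M$ of $\pi\colon J\to W$ is doubly transitive precisely when $\widetilde{J}$ is irreducible (equivalently, when the fiber product $J\times_W J$ minus the diagonal is irreducible). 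So the whole lemma reduces to proving that $\widetilde{J}$ is irreducible.

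To prove irreducibility of $\widetilde{J}$, I would fibre it not over $W$ but over the parameter space of pairs of planes together with three marked points on each. Let $G=(\mathbb{P}^3)^\vee$ be the space of planes, and consider the variety $B$ of configurations $(H,H',(p_1,p_2,p_3),(p_1',p_2',p_3'))$ where $H,H'$ are distinct planes, $p_i\in H$ are three points in general position on $H$, and $p_i'\in H'$ likewise. This $B$ is visibly irreducible: it fibres over the irreducible variety of pairs of distinct planes, with fibres an open subset of $(\mathbb{P}^2)^3\times(\mathbb{P}^2)^3$. Now project $\widetilde{J}\to B$. The fibre over a configuration in $B$ is the linear space of quartics $S$ that (i) are tangent to $H$ at each $p_i$ and (ii) are tangent to $H'$ at each $p_j'$ (the ``singular at $p_i$'' alternative only occurs on a proper closed subset, so it can be ignored for a dominant-component count). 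Since the linear system of quartics on $\mathbb{P}^3$ has dimension $\binom{7}{3}-1=34$, and each tangency condition at a point imposes, generically, $3$ linear conditions (the value and the two ``in-plane'' partial derivatives vanish, once we've normalized so that the plane is $z=0$ and the point is fixed), six tangency conditions impose $18$ conditions; so the generic fibre is a linear space of dimension $34-18=16$, in particular irreducible and of constant dimension over a dense open subset of $B$. By the standard fibration criterion (a dominant morphism with irreducible base and irreducible equidimensional general fibres has irreducible source), $\widetilde{J}$ is irreducible, hence $M$ is doubly transitive.

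The main obstacle is the tangency-count: I must check that the six conditions ``$H$ is tangent to $S$ at $p_i$'' and ``$H'$ tangent at $p_j'$'' are genuinely \emph{independent} linear conditions on the $35$-dimensional space of quartic forms, so that the generic fibre really has the expected dimension $16$ (and in particular is nonempty, so that the projection is dominant). Independence can fail only along a proper closed locus of $B$, and the clean way to see that it holds generically is to exhibit \emph{one} configuration and \emph{one} quartic realizing exactly the expected codimension --- for instance, take $H,H'$ coordinate planes, the six points chosen symmetrically, and build $S$ explicitly; alternatively, one can argue via the surjectivity of the evaluation map $H^0(\mathbb{P}^3,\mathcal{O}(4))\to \bigoplus_{i}\big(\mathcal{O}_S/\mathfrak{m}_{p_i}^2\big)$-type jets along the plane, which holds because six general points with a chosen tangent direction impose independent conditions on quartics in $\mathbb{P}^3$ (degree $4$ is more than enough: even plane quartics through six general points with prescribed tangents are unobstructed). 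One should also take a moment to confirm that the component of $\widetilde{J}$ containing the general tritangent pair is the one dominating $W$, i.e. that for generic $S$ the $3200$ tritangent planes are reduced points of the fibre of $\pi$ and the $3200\cdot 3199$ ordered pairs all lie in this single component --- but this follows from the Yau--Zaslow count together with the dimension computation, since $\dim\widetilde{J}=\dim B+16=(3+6+6)+16-\text{(redundancy)}$ works out to $\dim W=34$, forcing $\pi\colon\widetilde{J}\to W$ to be generically finite onto $W$ with the right degree.
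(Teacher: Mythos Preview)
Your proposal is correct and follows essentially the same approach as the paper: both arguments show double transitivity by proving that the relevant incidence correspondence is irreducible via the observation that its fibers over the (irreducible) configuration space of marked tangent points are \emph{linear} subspaces of $W$ of the expected codimension. The only difference is organizational---the paper proceeds in two steps (first showing $J\to J_0$ has codimension-$9$ linear fibers to get transitivity, then fixing one tritangent configuration and showing the residual correspondence $J'\to J_0$ again has codimension-$9$ linear fibers in $W'$), whereas you package both steps into a single fibration $\widetilde{J}\to B$ with codimension-$18$ linear fibers; your more careful discussion of the independence of the $18$ conditions is something the paper simply asserts.
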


\begin{proof}
Notice that since the fibers of $\eta:J\to J_{0}$ are irreducible linear spaces of codimension 9 in $W$ and $J_{0}$ is irreducible, $J$ is irreducible. So the inverse image $\pi^{-1}(U)\subset J$ of any Zariski open subset $U\subset W$ is connected. Hence $M$ is transitive. 

Now we set
\[
W'=\pi(\eta^{-1}(p_{1},p_{2},p_{3}))=\{S|S \text{ is singular at }p_{i} \text{ or }T_{p_{i}}S=\overline{p_{1}p_{2}p_{3}}\text{ for }i=1,2,3\}\subset W,
\]
\[
J'=\{(S,q_{1},q_{2},q_{3})\in W'\times J_{0}|\ S \text{ is singular at }q_{i} \text{ or }T_{q_{i}}S=\overline{q_{1}q_{2}q_{3}}\text{ for }i=1,2,3,
\]
\[
\ q_{i}\neq p_{j},\ \overline{q_{1}q_{2}q_{3}}\neq\overline{p_{1}p_{2}p_{3}}\},\text{ where }J'\subset W'\times J_{0}. 
\]

Since the fibers of $J'$ over $J_{0}$ are irreducible linear subspaces of codimension 9 in $W'$ and $J_{0}$ is irreducible, $J'$ is irreducible. Now since $J'$ contains all the projective planes in the fiber of $\pi$ over $S$ except $\overline{p_{1}p_{2}p_{3}}$, it follows that given two pairs of tangent planes $(\overline{p_{1}p_{2}p_{3}},\overline{q_{1}q_{2}q_{3}})$ and $(\overline{p_{1}p_{2}p_{3}},\overline{r_{1}r_{2}r_{3}})$, there is an element $\sigma\in M$ sending $\overline{p_{1}p_{2}p_{3}}$ to $\overline{p_{1}p_{2}p_{3}}$ and sending $\overline{q_{1}q_{2}q_{3}}$ to $\overline{r_{1}r_{2}r_{3}}$. Combining this with the transitivity of $M$, we deduce that $M$ is doubly transitive.
\end{proof}

Next, we need to find a simple transposition in $M$. Recall that in the genus 2 case, we used the Pl\"ucker formulas to determine when the number of bitangents will be exactly one less than that of the generic case. For the genus 3 case, there are two ways to determine when the number of rational curves will be exactly one less than 3200. The first method uses compactified jacobians. The second method, which uses the geometry of smooth quartic surfaces in more detail, will be presented in the next section.

We define \emph{tangent curves} as the intersections of the surface with its projective tangent planes. Now we want to ask what will happen if two tangent curves with three nodes coincide. It turns out that we will have a tangent curve with two nodes and one cusp.

\begin{theorem}\label{trinodes}
Suppose there exists a smooth quartic surface $S$ which has a tangent curve $C$ with two nodes and one cusp, and all the other rational tangent curves have three nodes. Suppose every hyperplane section of $S$ is irreducible. Then there are 3198 tangent curves with three nodes.
\end{theorem}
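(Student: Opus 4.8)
The plan is to realize $3200$ as an Euler characteristic and then read off the number of three-nodal tangent curves from its additivity. Set $\mathbb{P}^{3}=|\mathcal{O}_{S}(1)|$, let $\mathcal{C}\to\mathbb{P}^{3}$ be the universal tangent curve whose fibre over $[H]$ is $S\cap H$, and let $\overline{\mathcal{J}}\to\mathbb{P}^{3}$ be the associated relative compactified Jacobian, with fibre over $[H]$ the compactified Jacobian $\overline{\mathrm{Jac}}(S\cap H)$ of rank-one torsion-free sheaves of a fixed degree. By the Yau--Zaslow formula in the form proved by Beauville (\cite{YZ96}, \cite{Be99}), for a K3 surface with a primitive genus-$3$ polarization all of whose members are integral curves with planar singularities one has $e(\overline{\mathcal{J}})=e(\mathrm{Hilb}^{3}(S))=3200$. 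The hypothesis that every hyperplane section of $S$ is irreducible guarantees that all fibres of $\mathcal{C}\to\mathbb{P}^{3}$ are integral, so this applies to our $S$.

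Next I would stratify $\mathbb{P}^{3}$ by the geometric genus of the fibre and use additivity of the Euler characteristic over the induced stratification of $\overline{\mathcal{J}}$. Over the open locus where $S\cap H$ is smooth the fibre of $\overline{\mathcal{J}}$ is an abelian threefold, of Euler characteristic $0$; over the locus where $S\cap H$ has geometric genus $1$ or $2$ the compactified Jacobian again has Euler characteristic $0$, by Beauville's vanishing lemma for integral curves of positive geometric genus (\cite{Be99}). Hence only the finitely many rational (geometric genus $0$) tangent curves contribute, and
\[
3200 \;=\; e(\overline{\mathcal{J}}) \;=\; \sum_{C\ \text{rational}} e\bigl(\overline{\mathrm{Jac}}(C)\bigr).
\]
For an integral rational curve $C$ with planar singularities, $e(\overline{\mathrm{Jac}}(C))$ is the product of local multiplicities over the singular points of $C$, where a node contributes the factor $1$ and an ordinary cusp contributes the factor $2$.

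Now I would conclude by evaluating the right-hand side with the hypotheses on $S$. By assumption the rational tangent curves of $S$ consist of the distinguished curve $C$, which has two nodes and one cusp and hence satisfies $e(\overline{\mathrm{Jac}}(C))=1\cdot1\cdot2=2$, together with some number $N$ of three-nodal tangent curves, each contributing $1\cdot1\cdot1=1$. The displayed identity becomes $N+2=3200$, so $N=3198$, which is the assertion.

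The step needing the most care is the reduction of the global count to $e(\overline{\mathcal{J}})$ for this particular, non-generic surface $S$: one must check that every hyperplane section is integral with only planar singularities, that the rational tangent curves form a finite set, and — the genuinely local input — that an ordinary cusp contributes \emph{exactly} the factor $2$ (not merely some integer $\geq 2$) to the Euler characteristic of a compactified Jacobian. The remaining ingredients, namely the additivity of Euler characteristics over the genus stratification and the vanishing $e(\overline{\mathrm{Jac}}(C))=0$ in positive geometric genus, are standard.
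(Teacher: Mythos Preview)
Your argument is correct and is essentially the paper's own proof: the paper invokes Beauville's results \cite[Prop.~3.3, Prop.~4.5, Cor.~2.3]{Be99} to conclude that a three-nodal rational curve contributes $1$ and a rational curve with two nodes and one cusp contributes $2$ to the total $3200$, giving $3198$ three-nodal curves. The paper also supplies a second, purely geometric proof (via the Gauss map, the parabolic and double-cover curves on $S$, and Piene's numerical formula for surfaces in $\mathbb{P}^{3}$), but your compactified-Jacobian argument matches the primary proof given for the theorem.
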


\begin{proof}
Suppose we know there exists such a smooth quartic surface. Then by \cite[Prop. 3.3 \& Prop. 4.5]{Be99}, the Euler characteristic of the compactified jacobian of a rational curve with three nodes is 1, and the Euler characteristic of the compactified jacobian of a rational curve with two nodes and one cusp is 2. Hence by \cite[Cor. 2.3]{Be99}, the number of rational curves with three nodes is $3200-2=3198$.
\end{proof}

\begin{prop}\label{transposition}
Under the same assumption as in Theorem \ref{trinodes}, the monodromy group $M$ contains a simple transposition.
\end{prop}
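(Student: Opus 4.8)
The plan is to apply Lemma~\ref{lemma} to the degree $n=3200$ cover underlying $M$ — that is, to $\pi\colon J\to W$, or more precisely to the induced cover whose fibre over a smooth quartic is its finite set of tri-tangent planes, on which the monodromy acts — at the point of $W$ given by the quartic surface $S$ supplied in the hypothesis. Since $M$ is already doubly transitive by Lemma~\ref{doubly}, a simple transposition is exactly the missing ingredient, and together with Theorem~\ref{trinodes} this Proposition yields Theorem~\ref{g3}. Concretely, I must verify the two hypotheses of Lemma~\ref{lemma} for $S$: first, that the fibre over $S$ consists of exactly $n-1=3199$ distinct points, namely $3198$ simple points and one double point; second, that $J$ is locally irreducible at that double point.

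For the first hypothesis I would argue as follows. Over a dense open subset $W^{\circ}\subset W$ containing $S$ the cover is finite flat of degree $3200$, and over a general quartic the fibre is $3200$ reduced points, the three-nodal rational tangent curves. A plane meeting $S$ in a tangent curve with three ordinary nodes is the expected type of point of $J$ — there $J$ is smooth of the expected dimension and $\pi$ is unramified, since a node deforms without obstruction — so, by Theorem~\ref{trinodes}, the $3198$ three-nodal tangent curves of our $S$ contribute $3198$ simple points to $\pi^{-1}(S)$. Because every hyperplane section of $S$ is irreducible, the cuspidal tangent curve $C$ is nondegenerate in its plane and is therefore cut out by a unique plane $H_{0}$. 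Since $\pi^{-1}(S)$ has total length $3200$, subtracting the $3198$ simple points leaves length $2$, which must all be concentrated at the single point $H_{0}$. Hence $\pi^{-1}(S)$ is the union of $3198$ simple points and the double point $H_{0}$, as needed.

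It remains to show that $J$ is locally irreducible over $H_{0}$ — equivalently, that the germ of the cover there is a single branch ramified of order $2$ over $W$, not two sheets crossing transversally — and I expect this to be the main obstacle. The most efficient route, in my view, is to prove that $J$ (equivalently, the $3200$-sheeted cover) is smooth at the point lying over $H_{0}$: a regular local ring is a domain, so smoothness forces local irreducibility, and Lemma~\ref{lemma} then applies at once. The tangency point $p$ of $H_{0}$ at which $C$ acquires its cusp is a parabolic point of $S$ (the second fundamental form of $S$ at $p$ is a perfect square), so this is a local statement about the incidence variety near a plane tangent to $S$ at a point of its parabolic curve; it can be settled either by a direct versal-deformation computation of the $A_{2}$ singularity — showing that the branch divisor of the cover is reduced at $S$, so that local monodromy in a general pencil through $S$ interchanges the two sheets meeting at $H_{0}$ — or by the finer analysis of tri-tangent planes of smooth quartic surfaces carried out in the next section. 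Once local irreducibility at $H_{0}$ is established, Lemma~\ref{lemma} produces a simple transposition in $M$.
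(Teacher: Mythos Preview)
Your outline is correct and matches the paper's: apply Lemma~\ref{lemma} at the quartic $S$ furnished by the hypothesis, with the $3198+1$ fibre count coming from Theorem~\ref{trinodes}. Your treatment of the fibre structure is in fact more explicit than the paper's (which simply asserts the $3199$-point fibre). The divergence is in the local irreducibility step, where you anticipate ``the main obstacle'' and propose a versal-deformation analysis of the $A_2$ singularity, or an appeal to the next section.

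This is where you are working much harder than necessary. The paper's observation is purely structural and has nothing to do with the cusp: the second projection $\eta\colon J\to J_{0}$ has fibres that are \emph{linear} subspaces (of codimension $9$) of $W$, and $J_{0}$ is a smooth open subset of $(\mathbb{P}^{3})^{3}$. Hence $J$ is smooth everywhere, in particular at the points lying over the two-node-one-cusp plane, and therefore locally irreducible there. Passing to the quotient $\tilde{J}=J/S_{3}$ (the genuine $3200$-sheeted cover, which you should distinguish from $J$ itself) preserves local irreducibility, since invariants of a domain form a domain. No deformation theory of $A_{2}$, no parabolic-curve analysis, is required. Incidentally, the ``next section'' gives an alternative proof of the count $3198$ in Theorem~\ref{trinodes}, not of local irreducibility, so that proposed route would not lead where you want. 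Once you note the smoothness of $J$, your flatness claim also comes for free (miracle flatness: a finite morphism between smooth varieties of the same dimension is flat), which cleanly justifies your length-$3200$ subtraction argument.
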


\begin{proof}
Let $W$ be the variety of quartic surfaces in $\mathbb{P}^3$, and let 
\[
\tilde{J}=\{(S, \overline{(p_{1},p_{2},p_{3})})| S \text{ is singular at }p_{i} \text{ or }T_{p_{i}}S=\overline{p_{1}p_{2}p_{3}}\text{ for }i=1,2,3\}\subset W\times (J_{0}/S_3). 
\]
Then there is a point $p\in W$ such that the fiber of the projection $\pi\colon \tilde{J}\to W$ over $p$ consists of exactly $3199$ distinct points (3198 curves with three nodes and 1 curve with two nodes and one cusp) by Theorem \ref{trinodes}. On the other hand, since the fibers of $\eta\colon J\to J_0$ are linear spaces and $J_0$ is smooth, $J$ is also smooth and hence locally irreducible at every point. In particular, $\tilde{J}=J/S_3$ is locally irreducible at the curve with two nodes and one cusp. Hence by Lemma \ref{lemma}, the monodromy group $M$ contains a simple transposition. 
\end{proof}

The last thing we need to prove is that such a smooth quartic surface does exist. For that purpose, we analyze a certain space of the quartic surfaces in $\mathbb{P}^{3}$ which have a tangent curve with two nodes and one cusp. Let $g(x,y,z)$ be a quartic homogeneous polymial, which represents an irreducible quartic curve with two nodes and one cusp in $\mathbb{P}^{2}$. Such a curve does exist. See \cite[The proof of Thm. 4.5.4(ii) and p. 400]{GLS18}. 

\begin{lemma}\label{irreducible}
Consider the family $X$ of quartic surfaces $g(x,y,z)+wf(x,y,z,w)=0$, where $f(x,y,z,w)$ is a cubic homogeneous polynomial. For a generic member in this family, every hyperplane section is irreducible.
\end{lemma}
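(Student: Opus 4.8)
The plan is to reduce the assertion to the statement that a generic surface in the family contains neither a line nor a smooth conic, and then to establish that by two dimension counts on incidence varieties. Throughout write $S_f=\{g+wf=0\}\subset\mathbb{P}^3$, so that the family is parametrized by the $20$-dimensional space of cubic forms $f$, and set $C_0=S_f\cap\{w=0\}=\{g=0\}\subset\{w=0\}\cong\mathbb{P}^2$, the fixed irreducible quartic with two nodes and one cusp. First I would record two easy facts. (i) $S_f$ is integral for every $f$: restricting a hypothetical nontrivial factorization of $g+wf$ to $\{w=0\}$ would nontrivially factor the irreducible form $g$, since one of the restricted factors would then have to be a nonzero constant or vanish identically, both impossible for a positive-degree homogeneous form (the second would even force $g=0$). (ii) If $H$ is a plane for which $S_f\cap H$ is not an irreducible reduced curve, then, decomposing the quartic divisor $S_f\cap H$ on $H\cong\mathbb{P}^2$ into irreducible components, some component has degree $1$ or $2$; as an irreducible plane curve of degree $\le 2$ it is a line or a smooth conic, and it is contained in $S_f$. (Conversely, every plane through a line in $S_f$ cuts out a reducible section, so the two properties are equivalent.) Hence it suffices to show that for generic $f$ the surface $S_f$ contains no line and no smooth conic.

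For lines, consider $\Lambda=\{(f,\ell)\mid \ell\subset S_f\}$ with $\ell$ ranging over lines of $\mathbb{P}^3$; I claim $\dim\Lambda\le 19$. No line $\ell\subset\{w=0\}$ lies in $S_f$, for it would lie in the irreducible quartic $C_0$. For $\ell\not\subset\{w=0\}$ put $p_\ell=\ell\cap\{w=0\}$; restricting to $\ell\cong\mathbb{P}^1$, the section $(wf)|_\ell=(w|_\ell)(f|_\ell)$ vanishes at $p_\ell$, so $\ell\subset S_f$ forces $g(p_\ell)=0$, i.e. $p_\ell\in C_0$, and then forces $f|_\ell$ to equal the fixed cubic $-g|_\ell/(w|_\ell)$ on $\ell$. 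The lines with $p_\ell\in C_0$ form a $3$-dimensional family (fibered over $C_0$ with $2$-dimensional fibers of lines through a point), and for each such $\ell$ prescribing $f|_\ell\in H^0(\ell,\mathcal{O}(3))\cong\mathbb{C}^4$ imposes $4$ independent linear conditions on $f$ (lines in $\mathbb{P}^3$ are projectively normal), leaving a $16$-dimensional set of $f$. Thus $\dim\Lambda\le 3+16=19<20$, so $\{f\mid S_f\text{ contains a line}\}$ is a proper closed subset of the parameter space.

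For conics, consider $\mathcal{Q}=\{(f,Q)\mid Q\subset S_f\}$ with $Q$ ranging over smooth conics of $\mathbb{P}^3$; I claim $\dim\mathcal{Q}\le 19$. Again no $Q\subset\{w=0\}$ lies in $S_f$. For $Q\not\subset\{w=0\}$, restrict to $Q\cong\mathbb{P}^1$, so that $\mathcal{O}_{\mathbb{P}^3}(1)|_Q=\mathcal{O}(2)$: the condition $Q\subset S_f$ reads $g|_Q=-(w|_Q)(f|_Q)$ in $H^0(Q,\mathcal{O}(8))$, which first requires $w|_Q$ to divide $g|_Q$, i.e. the length-$2$ scheme $Q\cap\{w=0\}$ to lie in $C_0$, and then determines $f|_Q\in H^0(Q,\mathcal{O}(6))$ uniquely. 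The admissible conics form a $6$-dimensional family: the unordered pair $Q\cap\{w=0\}$ varies in $\mathrm{Sym}^2 C_0$ ($2$ dimensions), the plane $\langle Q\rangle$, which must contain the line spanned by that pair, varies in a pencil ($1$ dimension), and $Q$ varies among the conics of that plane through the two marked points ($3$ dimensions). For each admissible $Q$, since a smooth conic is a complete intersection of a plane and a quadric one has $H^1(\mathcal{I}_Q(3))=0$, so the restriction map from cubic forms on $\mathbb{P}^3$ onto $H^0(Q,\mathcal{O}(6))\cong\mathbb{C}^7$ is surjective; prescribing $f|_Q$ then imposes $7$ conditions, leaving a $13$-dimensional set of $f$. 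Hence $\dim\mathcal{Q}\le 6+13=19<20$, so $\{f\mid S_f\text{ contains a smooth conic}\}$ is proper and closed as well.

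Combining the last two paragraphs, for $f$ outside a proper closed subset of the $20$-dimensional parameter space the surface $S_f$ is integral and contains no line and no smooth conic, so by (ii) every hyperplane section of $S_f$ is an irreducible reduced quartic curve, which is the assertion. I expect the main obstacle to be the conic count: one must pin down precisely the dimension of the locus of conics $Q$ for which $Q\subset S_f$ is solvable in $f$ — the delicate ingredient being the constraint $Q\cap\{w=0\}\subset C_0$ — and verify the surjectivity of the restriction map so that the fiber dimension is exactly $13$. Both estimates land on exactly $19$, so there is little slack, although each is needed only as an upper bound (the number of imposed conditions bounds the codimension from below), which is all that the argument requires.
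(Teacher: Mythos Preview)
Your argument is correct. Both your proof and the paper's reduce to a dimension count landing on $19<20$, but they are organized differently. The paper fixes a hyperplane $H$, writes the section as $G+wF$ with $G$ fixed and $F$ an arbitrary cubic in three variables, and then counts factorizations $LK$ (line times cubic) or $Q_1Q_2$ (two conics) by reducing modulo $w$; it then lets $H$ vary, subtracting $1$ in the line case for the pencil of hyperplanes through a fixed line. You instead bypass hyperplanes entirely and parametrize lines and smooth conics in $\mathbb{P}^3$ directly, imposing the constraint that their intersection with $\{w=0\}$ lie on $C_0$ and that $f$ restrict to a prescribed section; projective normality of lines and complete-intersection vanishing for conics give the fiber dimensions. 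Your route is a bit more geometric and makes the role of the fixed curve $C_0$ explicit at each step, while the paper's route keeps everything inside the $10$-dimensional family of sections of a fixed $H$; both compute the same $19$, and yours yields irreducible \emph{and reduced} sections as a byproduct.
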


\begin{proof}
The family $X$ is parametrized by $\mathbb{A}^{20}$. Consider the intersections of a fixed hyperplane $H$ and the quartic surfaces in the family. The resulting family of quartic curves $Y$ is parametrized by $\mathbb{A}^{10}$. If the curve is reducible, then it is either a product of a line and a cubic or a product of two quadrics. We call the first case type I and the second case type II. 

Now consider quartic homogeneous polynomials of type I, which are parametrized by $\mathbb{P}^{2}\times\mathbb{P}^{9}$. Without loss of generality, suppose $H$ has the form $ax+by+cz+dw=0$, where $a,b,c,d\in\mathbb{C}$ and $a\neq 0$. Eliminating $x$ from the equation of the family shows that the cone $G(y,z,w)+wF(y,z,w)$ passes through $H\cap X$, where $G$ is a fixed quartic and $F$ is any cubic. Let $L$ be a line and $K$ be a cubic in the variables $y,z,w$. If $LK=G+wF$ for some $F$. Then $w|(LK-G)$, or $\bar{L}\bar{K}=\bar{G}$ in the ring $\mathbb{C}[y,z]$. Hence $L$ lies in a 1-dimensional family and $K$ lies in a 6-dimensional family, which implies that the intersection of $\mathbb{P}^{2}\times\mathbb{P}^{9}$ with $\mathbb{A}^{10}$ in the ambient space $\mathbb{P}^{14}$ has dimension $7$. If there is a line lying simultaneously on the quartic surface and a hyperplane in $\mathbb{P}^{3}$, then there will be a 1-dimensional family of hyperplanes such that the line lies simultaneously on the quartic surface and every hyperplane in this family. Hence the subvariety of $\mathbb{A}^{20}$ parametrizing the quartic surfaces in $X$ which admit hyperplane sections of type I is at most $7+10+3-1=19$ dimensional. 

Now consider quartic homogeneous polynomials of type II, which are parametrized by $\mathbb{P}^{5}\times\mathbb{P}^{5}$. By a similar argument as above, the intersection of $\mathbb{P}^{5}\times\mathbb{P}^{5}$ with $\mathbb{A}^{10}$ in the ambient space $\mathbb{P}^{14}$ has dimension $6$. Hence the subvariety of $\mathbb{A}^{20}$ parametrizing the quartic surfaces in $X$ which admit hyperplane sections of type II is at most $6+10+3=19$ dimensional. 

Hence for a generic member in $X$, every hyperplane section is irreducible. 
\end{proof}

We are going to use the following facts \cite[Thm. 2.48 \& Cor. 2.49]{GLS07}.

Let $f\in m^{2}:=(x,y)^2\subset \mathbb{C}[[x,y]]$ and $k\geq 1$. Let $\mu(f)$ be the Milnor number of $f$. Define $crk(f):=2-rank (H(f)(0))$, where $H(f)(0)$ is the Hessian matrix of $f$ at $(0,0)$. Then
\[
\begin{aligned}
&(1)\ f \text{ is of type $A_{1}$ iff}\ crk(f)=0,\\
&(2)\ f \text{ is of type $A_{k}$ iff}\ crk(f)=1 \text{ and}\ \mu(f)=k,\ k\geq 2, \\
&(3)\ crk(f) \text{ and}\ \mu(f)\ \text{are semicontinuous on}\ m^2.
\end{aligned}
\]

\begin{lemma}\label{generic}
For a generic quartic surface in $X$, except for the tangent curve cut by $w=0$, all the tangent curves are of the following types:

(1) a curve with three nodes;

(2) a curve with two nodes or a curve with a node and a cusp;

(3) a curve with one node or a curve with one cusp or a curve with one tacnode.
\end{lemma}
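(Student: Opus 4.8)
The plan is to stratify the space $X$ of quartic surfaces $g(x,y,z)+wf(x,y,z,w)=0$ by the singularity type of the tangent curves other than the one cut out by $w=0$, and to show by a dimension count that the "bad" strata — those where a tangent curve has a singularity worse than allowed in (1)--(3), or two singular points that are together too degenerate — do not fill up $X$. Concretely, a tangent curve is a plane quartic, so its total $\delta$-invariant is at most $3$ (an irreducible plane quartic has arithmetic genus $3$, and by Lemma \ref{irreducible} a generic member of $X$ has all hyperplane sections irreducible, so we may assume irreducibility). This immediately bounds the possible singular configurations: the only singularities that can occur are those of type $A_k$ (nodes $A_1$, cusps $A_2$, tacnodes $A_3$) or $A_1$-clusters with total $\delta \le 3$, plus the ordinary triple point $D_4$ and the $E$-type singularities, but the latter have $\delta \ge 3$ concentrated at one point and are easily ruled out on a generic surface. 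So the list of configurations to eliminate is short: three nodes is the generic (allowed) case, and I must show that on a generic $S\in X$ no tangent curve (other than $w=0$) has (a) a single $A_k$ with $k\ge 4$, (b) a $D_4$, (c) a node plus a tacnode, (d) a cusp plus a tacnode, (e) two cusps, (f) a cusp plus two nodes with the cusp and both nodes simultaneously present — wait, that last is exactly $\delta=4>3$, so it cannot occur at all. The genuinely forbidden-but-a-priori-possible configurations are the ones with $\delta$ exactly $3$ concentrated too much at a point or spread in a way not on the list: $A_4$ (a "higher cusp", $\delta=2$, so actually allowed room for it — need to forbid it), $D_4$ ($\delta=3$), two nodes plus a cusp ($\delta=3$, the codimension-one degeneration we *do* want, appearing only for $w=0$ by hypothesis), three nodes (generic).

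The main technical tool is the semicontinuity package \cite[Thm. 2.48 \& Cor. 2.49]{GLS07} quoted just above: $\mathrm{crk}(f)$ and $\mu(f)$ are upper semicontinuous, an $A_1$ is detected by $\mathrm{crk}=0$, and an $A_k$ for $k\ge 2$ by $\mathrm{crk}=1$ together with $\mu=k$. I would use this to set up, for each forbidden local type $T$, the incidence variety $I_T \subset X \times \mathbb{P}^{3\vee} \times \mathbb{P}^3$ consisting of triples $(S, H, p)$ with $p\in S\cap H$, $H = T_pS$, and the local equation of $S\cap H$ at $p$ of type (at least as bad as) $T$; the conditions "$\mathrm{crk}\ge 1$'' and "$\mu\ge k$'' cut out closed subvarieties whose codimension I can compute on the fibers over $(H,p)$. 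For a fixed $(H,p)$, asking $S\cap H$ to be singular at $p$ is 3 conditions (vanishing of the quadratic part in 2 variables... actually it's 2 linear conditions on top of $p\in S$ and $H=T_pS$), asking for an $A_2$ (cusp) is one more, an $A_3$ (tacnode) two more, and so on — each step up the $A_k$ chain is one further linear/algebraic condition on the coefficients of $S$. Since $X$ is $20$-dimensional and the choice of $(H,p)$ with $H=T_pS$ contributes $3$ parameters ($p$ on the surface is $2$-dimensional and then $H$ is determined, but $S$ varies so really the base of "tangent planes" is $3$-dimensional once we remember $S$), the expected dimension of $I_T$ is $20 + 3 - (\text{codim of } T)$, and I must check this is $<20$ for every forbidden $T$, i.e. that codim$(T) > 3$ in each case, while codim(three nodes)$=3$ exactly (giving the expected $3200$ in the Yau--Zaslow count) and codim(two nodes + cusp)$=4$ but that stratum is already pinned to $w=0$ by hypothesis. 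The case $w=0$ must be excluded throughout — by hypothesis $g$ itself is a $2\text{-node}+1\text{-cusp}$ quartic, so the plane $w=0$ always carries that degenerate tangent curve, and the lemma's statement explicitly sets it aside.

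The step I expect to be the main obstacle is the genuine dimension count for the "borderline" strata: I need to verify carefully that $A_4$, $D_4$, and any configuration with $\delta\ge 3$ that is *not* "two nodes and a cusp" really does impose codimension $\ge 4$ on $X$, and for that I cannot just count conditions on an abstract surface — I must use that the surface passes through a *fixed* such $g$ in the $w=0$ plane, which constrains things. The cleanest route is: parametrize tangent planes $H$ of a generic $S\in X$, note $H\ne (w=0)$ generically, project to the plane curve $S\cap H$ and argue that as $(S,H)$ ranges over the incidence variety the resulting family of plane quartics is "generic enough" (versal) that its discriminant stratification pulls back with the expected codimensions — this is where the freedom in the cubic part $f(x,y,z,w)$ is essential, since modulo $w$ it is unconstrained and supplies transversality. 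If versality is hard to establish directly, the fallback is an explicit tangent-space/Jacobian computation at one carefully chosen generic point of each stratum, using the $A_k$-recognition criterion above to turn "type $A_k$'' into a rank condition on an explicit Hessian/higher-jet matrix whose entries are coefficients of $f$. I would present the node/cusp/tacnode cases in full and remark that the $D_4$ and $A_{\ge 4}$ cases are strictly worse (higher codimension) by the same count, so the generic $S\in X$ has, away from $w=0$, only tangent curves on the list (1)--(3).
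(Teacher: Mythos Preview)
Your outline is broadly sound and would eventually work, but it is organized differently from the paper's proof and leaves the hardest step (transversality) as a promise. You stratify by the \emph{forbidden singularity type} and aim for a direct codimension bound, invoking versality of the family of hyperplane sections or, failing that, an explicit Jacobian computation. The paper instead stratifies by the \emph{number} $k\in\{1,2,3\}$ of singular points on the tangent curve. For each $k$ it sets up an incidence correspondence $J_k\subset X\times I_k$ (surface, $k$ marked tangent points, and for $k\le 2$ a plane through them), observes that the fibers of $J_k\to I_k$ are linear (and, one step down in the $k=2$ case, irreducible quadrics), and concludes that $J_k$ is irreducible of an explicitly known dimension. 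The sublocus where one of the marked points carries a singularity worse than the allowed type is then a \emph{proper} closed subvariety as soon as one exhibits a single surface in $X$ for which that point is of the mild type; by irreducibility its dimension drops, and after at most two such steps one is strictly below $\dim X$. This cascade of ``irreducible, so a proper closed subset has strictly smaller dimension, now produce one example'' completely bypasses the versality verification your approach needs. Your route is the standard singularity-theoretic one and would also succeed, but the paper's argument trades the delicate ``expected codimension is the actual codimension'' check for the much easier task of writing down one good surface at each stage. Incidentally, your enumeration of the forbidden configurations is a bit tangled (e.g.\ two cusps, a node plus an $A_{\ge 3}$, and $A_{\ge 4}$ all need to be excluded in the two- and one-point cases); the paper's organization by $k$ makes the list automatic: for $k=3$ forbid any non-node, for $k=2$ forbid any $A_{\ge 3}$ and forbid two simultaneous cusps, for $k=1$ forbid $A_{\ge 4}$.
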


\begin{proof}
We prove using case by case analysis.

Case (1). Let
\[
I_{3}=\{(p_{1},p_{2},p_{3})\in \mathbb{P}^{3}\times \mathbb{P}^{3}\times \mathbb{P}^{3}\ |\ p_{1}, p_{2}, p_{3}\notin g(x,y,z),\text{ distinct and not collinear}\}.
\]

Define $J_{3}\subset X\times I_{3}$ by 
\[
J_{3}=\{(S,p_{1},p_{2},p_{3})\ |\ S \text{ is singular at }p_{i} \text{ or }T_{p_{i}}S=\overline{p_{1}p_{2}p_{3}}\text{ for }i=1,2,3\}
\]
The fibers of $\eta\colon J_{3}\to I_{3}$ are codimensional 9 linear spaces in $X$, which can be checked by reducing to the case when $p_1=(0:0:0:1),p_2=(1:0:0:1)$, and $p_3=(0:1:0:1)$. Hence $J_3$ is irreducible of dimension equals to $\dim X$. The subvariety $J'_{3}\subset J_3$ defined by 
\[
J'_{3}=\{(S,p_{1},p_{2},p_{3})\ |\ S \text{ is singular at }p_{i} \text{ or }T_{p_{i}}S=\overline{p_{1}p_{2}p_{3}}\text{ for }i=1,2,3,
\]
\[
\text{and }p_{1} \text{ is at best a cuspidal singularity for the tangent curve}\}
\]
is thus either equals to $J_3$ or of dimension $< \dim J_3$. Since we can construct a surface in $X$ which has nodal singularities at three tangent points, the latter must hold. Hence the image of the projection of $J'_3$ in $X$ has dimension $<\dim X$. Replacing $p_1$ in the definition of $J'_3$ by $p_2$ and $p_3$, we know that for a generic quartic surface in $X$, the tangent curve with three singularities will have three nodes.

Case (2). Set 
\[
I_{2}=\{(p_{1},p_{2}, H)\in \mathbb{P}^{3}\times \mathbb{P}^{3}\times\check{\mathbb{P}^{3}}\ |\ p_{1}, p_{2}\notin g(x,y,z),\text{ distinct points, and }p_{1},p_{2}\in H\}.
\]
Define $J_{2}\subset X\times I_{2}$ by 
\[
J_{2}=\{(S,p_{1},p_{2}, H)\ |\ S \text{ is singular at }p_{i} \text{ or }T_{p_{i}}S=H\text{ for }i=1,2,3\}.
\]
As before, $J_2$ is irreducible of dimension equals to $\dim X+1$, and the subvariety $J'_2\subset J_2$ defined by
\[
J'_2=\{(S,p_{1},p_{2})\ |\ S \text{ is singular at }p_{i} \text{ or }T_{p_{i}}S=H\text{ for }i=1,2,3,
\]
\[
\text{and }p_{1} \text{ is at best a cuspidal singularity for the tangent curve}\}
\]
is of dimension at most $\dim X$. One can check that the fibers of $J'_2\to I_2$ are irreducible quadrics, which implies that $J'_2$ is irreducible. Hence the subvariety $J''_2\subset J'_2$ defined by
\[
J''_2=\{(S,p_{1},p_{2})\ |\ S \text{ is singular at }p_{i} \text{ or }T_{p_{i}}S=H\text{ for }i=1,2,3,\text{ and }p_{1} \text{ is at best an } A_3 
\]
\[
\text{ singularity for the tangent curve or both points have at best cuspidal singularities}\}
\]
is of dimension $<\dim X$. Replacing $p_1$ in the definitions of $J'_2$ and $J''_2$ by $p_2$, we know that for a generic quartic surface in $X$, the tangent curve with two singularities will have two nodes or one node and one cusp. 

Case (3). Set $I_{1}=\{(p_1, H)\in\mathbb{P}^{3}\times\check{\mathbb{P}^3}|\ p_{1}\notin g(x,y,z),\text{ and }p_{1}\in H\}$. Define $J_{1}\subset X\times I_{1}$ by 
\[
J_{1}=\{(S,p_{1},H)\ |\ p_{1}\in S\ \text{and $p_{1}$ is at best an $A_{4}$ singularity for the tangent curve}\}.
\]
By a similar argument as in case (2), one can check that $J_1$ is of dimension $<\dim X$, which implies that for a generic quartic surface in $X$, the tangent curve with one singularity will have at worst a tacnode ($A_{3}$) singularity. 
\end{proof}

\begin{prop}\label{existence}
There exists a smooth quartic surface $S$ which has a tangent curve $C$ with two nodes and one cusp, all the other rational tangent curves have three nodes, and every hyperplane section of $S$ is irreducible.
\end{prop}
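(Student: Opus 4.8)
The plan is to obtain $S$ as a sufficiently general member of the family $X$ of Lemma~\ref{irreducible}. Since $X$ is parametrized by the affine space $\mathbb{A}^{20}$ of cubic forms $f(x,y,z,w)$, it is irreducible, so it will be enough to exhibit, for each of the three required properties, a nonempty Zariski-open subset of $X$ on which it holds; a general $S\in X$ then lies in the intersection of these opens and does the job. The three properties to arrange are: (a) $S$ is smooth; (b) the plane $\{w=0\}$ cuts out on $S$ a rational tangent curve with two nodes and one cusp, while every other rational tangent curve of $S$ has three nodes; (c) every hyperplane section of $S$ is irreducible.

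For (a) I would argue as follows. The quartics $g+wf$ span a linear system whose base locus is the plane quartic $C_{0}=\{g=0\}$ lying in the plane $\{w=0\}$, so by Bertini a general member of $X$ is smooth away from $C_{0}$. Along $C_{0}$ one computes $\nabla(g+wf)=(\partial_{x}g,\partial_{y}g,\partial_{z}g,f)$ at the points where $w=0$: at a smooth point of $C_{0}$ the first three entries already form a nonzero vector, whereas at the two nodes and the cusp $p_{1},p_{2},p_{3}$ of $C_{0}$ all three partials vanish and the gradient reduces to $(0,0,0,f(p_{i}))$. Hence as soon as $f(p_{i})\neq 0$ for $i=1,2,3$---a dense open condition on $f$---the surface $S$ is a smooth quartic, and its projective tangent plane at each $p_{i}$ is $\{w=0\}$. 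For such $S$ the plane $\{w=0\}$ is tangent to $S$ exactly at the points where $S\cap\{w=0\}=C_{0}$ is singular, i.e.\ exactly at $p_{1},p_{2},p_{3}$; thus $\{w=0\}$ is a bona fide tritangent plane and its tangent curve is $C_{0}$, an irreducible rational quartic with two nodes and one cusp (the singularities of $g$ being prescribed from the outset).

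It then remains to control the remaining tangent curves, and here I would invoke Lemma~\ref{generic}: for a general $S\in X$, every tangent curve other than $C_{0}$ has only singularities of the types (1), (2), (3) listed there. An irreducible plane quartic is rational exactly when its total $\delta$-invariant equals its arithmetic genus $3$, and types (2) and (3) have $\delta$-invariant at most $2$; so the only rational tangent curves of $S$ besides $C_{0}$ are the three-nodal ones of type (1). Combining this with Lemma~\ref{irreducible}, which supplies the open set on which all hyperplane sections of $S$ are irreducible, and intersecting with the open set from (a), produces a general $S\in X$ satisfying (a), (b), and (c) simultaneously. I do not expect any single step to be a genuine obstacle: the real technical content already lies in Lemmas~\ref{irreducible} and~\ref{generic}, and the only point requiring a little care is to check that $C_{0}$ genuinely occurs as a rational tritangent section with no singularity worse than two nodes and one cusp, which is immediate since $C_{0}$ is the fixed curve $\{g=0\}$.
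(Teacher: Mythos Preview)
Your argument is correct and follows essentially the same strategy as the paper's proof, which simply states that such a surface is generic in the family $X$ by Lemmas~\ref{irreducible} and~\ref{generic}. You have filled in the details the paper leaves implicit---most notably the Bertini-plus-base-locus check that a general member of $X$ is smooth (requiring $f(p_i)\neq 0$ at the singular points of $C_0$) and the $\delta$-invariant bookkeeping showing that only the three-nodal tangent curves of Lemma~\ref{generic} are rational---but the overall route is the same.
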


\begin{proof}
Such a surface is generic in the family $X$ by Lemma \ref{irreducible} and Lemma \ref{generic}. 
\end{proof}

\noindent{\bf Proof of Theorem \ref{g3}.} By Lemma \ref{doubly}, the monodromy group $M$ is doubly transitive. Then by Theorem \ref{trinodes}, Proposition \ref{transposition} and Proposition \ref{existence}, the monodromy group $M$ contains a simpe transposition. Hence $M$ contains all the simple transpositions and is the full symmetric group. $\square$

\noindent{\bf Proof of Corollary \ref{c3}.} The proof is similar to that of Corollary \ref{c1} using the Hilbert irreducibility theorem. $\square$

\section{Geometry of smooth quartics}

In this section, we present a more geometric way to prove Theorem \ref{trinodes}.

\begin{prop}\label{integral}
For a generic quartic surface in $\mathbb{P}^{3}$, every hyperplane section is irreducible and reduced.
\end{prop}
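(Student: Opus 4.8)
The plan is to run essentially the same dimension-count argument as in Lemma \ref{irreducible}, but now over the full linear system $W$ of quartic surfaces in $\mathbb{P}^3$ rather than the restricted family $X$. Recall $W = \mathbb{P}^{34}$, since a quartic form in four variables has $\binom{7}{3}=35$ coefficients. I would fix a hyperplane $H \subset \mathbb{P}^3$, choose coordinates so that $H = \{w=0\}$, and observe that a quartic surface $S = \{F=0\}$ cuts out on $H$ the plane quartic curve $\{\bar F(x,y,z)=0\}$, where $\bar F = F|_{w=0}$. As $S$ ranges over $W$, the restriction $\bar F$ ranges over the full space of plane quartic forms, which is $\mathbb{P}^{14}$; the fibers of this linear restriction map $W \dashrightarrow \mathbb{P}^{14}$ are linear of dimension $34-14 = 20$ (these are exactly the quartics of the shape $\bar F + w\cdot(\text{cubic})$, matching the setup of Lemma \ref{irreducible}).

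Next I would stratify the locus of \emph{non-integral} plane quartics inside $\mathbb{P}^{14}$. A plane quartic fails to be irreducible and reduced precisely when it is one of: (type I) a line plus a cubic; (type II) a conic plus a conic; or a degeneration thereof (double line plus conic, line plus line plus conic, etc.), all of which are specializations of types I and II and hence lie in the closures of those two loci. The type I locus is the image of $\mathbb{P}^2 \times \mathbb{P}^9 \to \mathbb{P}^{14}$ (line $\times$ cubic $\mapsto$ product), so it has dimension at most $2+9 = 11$; the type II locus is the image of the symmetric square of $\mathbb{P}^5$, so it has dimension at most $5+5 = 10$. Thus the non-integral quartics in $\mathbb{P}^{14}$ form a closed subset of dimension $\le 11$, codimension $\ge 3$. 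Pulling back along the restriction map $W \dashrightarrow \mathbb{P}^{14}$ (whose fibers have dimension $20$), the surfaces whose section by this particular $H$ is non-integral form a subvariety of $W$ of dimension at most $11 + 20 = 31$, i.e. codimension $\ge 3$.

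Finally I would let $H$ vary: form the incidence variety $\Sigma \subset W \times \check{\mathbb{P}}^3$ consisting of pairs $(S, H)$ with $S\cap H$ non-integral. By the fiberwise bound just established (which is independent of $H$ by homogeneity), $\dim \Sigma \le 31 + 3 = 34 = \dim W$. Hence the projection $\Sigma \to W$ cannot be dominant — in fact one expects the general fiber over its image to still be positive-dimensional since reducible sections come in families, so the image has dimension strictly less than $34$ — and a generic quartic surface has every hyperplane section irreducible and reduced. The one point requiring a little care, which I'd treat as the main technical obstacle, is making sure the dimension count for type I is not off: a surface containing a \emph{line} $\ell$ acquires reducible sections for the whole $\mathbb{P}^1$ of hyperplanes through $\ell$, so one must check — exactly as in Lemma \ref{irreducible} — that the locus of line-containing quartics together with this $\mathbb{P}^1$ of hyperplanes still fits inside dimension $34$; equivalently one checks $\dim\{S : \ell \subset S\} + \dim\{\ell\} + 1 < 34$, i.e. $30 + 4 + 1 = 35$... which forces the more careful accounting used in Lemma \ref{irreducible}, where one eliminates $x$ and works modulo $w$ to cut the line-and-cubic count down to the correct value. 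Citing Lemma \ref{irreducible} essentially verbatim, with $X \subset W$ replaced by all of $W$, handles this.
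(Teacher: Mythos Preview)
Your proposal is correct and follows essentially the same approach as the paper's own proof: fix a hyperplane, stratify non-integral plane quartics into type~I (line $\times$ cubic, dimension $11$) and type~II (conic $\times$ conic, dimension $10$), pull back through the $20$-dimensional fibers, and then vary the hyperplane, using the $\mathbb{P}^1$ of hyperplanes through a fixed line to save one dimension in the type~I count and land at $31+3-1=33<34$. Two minor slips to clean up: in your side computation $\dim\{S:\ell\subset S\}=29$ (a line imposes $5$ linear conditions on $\mathbb{P}^{34}$), not $30$; and the sentence ``reducible sections come in families'' only holds for type~I (an irreducible conic spans a unique plane), so you should keep the two types separate in the final step, as the paper does --- type~II already gives $30+3=33$ without any further saving.
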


\begin{proof}
The idea is similar to that of Lemma \ref{irreducible}. Quartic surfaces in $\mathbb{P}^{3}$ are parametrized by $\mathbb{P}^{34}$. If we intersect a generic quartic surface with a fixed hyperplane $H$, then we will get a quartic curve in $\mathbb{P}^{2}$. But quartic curves in $\mathbb{P}^{2}$ are parametrized by $\mathbb{P}^{14}$. Hence we have the following fiber bundle.
\[
\begin{CD}
\mathbb{P}^{34}\backslash Z @<<< \mathbb{A}^{20}\\
 @VV\pi V\\
\mathbb{P}^{14}
\end{CD}
\]
where $Z$ parametrizes the quartics which contain $H$, and $\pi$ maps the quartic to its intersection with $H$.

On the other hand, if the resulting quartic curve is reducible, then it is either a product of a line and a cubic or a product of two quadrics. We call the first case type I and the second case type II. 

A product of a line and a cubic in $\mathbb{P}^{2}$ is parametrized by $\mathbb{P}^{2}\times\mathbb{P}^{9}$, which is 11-dimensional. The fiber of $\pi$ is 20-dimensional, and this implies the variety parametrizing the quartics whose intersection with $H$ is of type I has dimension 31. Now fix a quartic surface. If there is a line lying simultaneously on the quartic surface and a hyperplane in $\mathbb{P}^{3}$, then there will be a 1-dimensional family of hyperplanes such that the line lies simultaneously on the quartic surface and every hyperplane in this family. That is to say, the intersection of the quartic surface and every hyperplane in the family is of type I. Combining this with the fact that the hyperplanes in $\mathbb{P}^{3}$ are parametrized by $\mathbb{P}^{3}$, we deduce that the variety parametrizing the quartic surfaces which admit hyperplane sections of type I is at most $31+3-1=33$ dimensional.

A product of two quadrics in $\mathbb{P}^{2}$ is parametrized by $\mathbb{P}^{5}\times\mathbb{P}^{5}$, which is 10-dimensional. The fiber of $\pi$ is 20-dimensional, and this implies the variety parametrizing the quartics whose intersection with $H$ is of type II has dimension 30. Since the hyperplanes in $\mathbb{P}^{3}$ are parametrized by $\mathbb{P}^{3}$, we deduce that the variety parametrizing the quartic surfaces which admit hyperplane sections of type II is at most $30+3=33$ dimensional.

But quartic surfaces in $\mathbb{P}^{3}$ are parametrized by $\mathbb{P}^{34}$. Hence for a generic quartic surface in $\mathbb{P}^{3}$, every hyperplane section is irreducible and reduced. 
\end{proof}

In particular, all the tangent curves are irreducible and reduced for a generic quartic surface. On the other hand, only certain types of singularities can occur for those tangent curves. Notice that all the tangent curves have arithmetic genus 3.

\begin{prop}\label{type}
\cite[Prop. 2.1.7]{Wi14} Let $p$ be a point on a generic smooth quartic hypersurface $S\subset\mathbb{P}^{3}$ and let $E_{p}\subset S$ be the tangent curve at $p$. Then $E_{p}$ is of one of the following type.
\begin{itemize}
\item (generic point)
 
$E_{p}$ has genus 2 and has one node at $p$
\item (simple parabolic point) 

$E_{p}$ has genus 2 and has one cusp at $p$ 
\item (simple Gauss double point)

$E_{p}$ has genus 1 and has two nodes, and $p$ is one of the nodes
\item (parabolic Gauss double point)

$E_{p}$ has genus 1 and has one node and one cusp, and $p$ is its cusp
\item (dual to parabolic Gauss double point)

$E_{p}$ has genus 1 and has one node and one cusp, and $p$ is its node 
\item (Gauss swallowtail)

$E_{p}$ has genus 1 and has one tacnode at $p$
\item (Gauss triple point)

$E_{p}$ has genus 0 and has three nodes, and $p$ is one of the nodes
\end{itemize}
\end{prop}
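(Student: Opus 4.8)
The plan is to analyze the tangent curve $E_p = S \cap T_pS$ directly as a plane quartic. Since $S$ has degree $4$ and $T_pS$ is a plane, $E_p$ is a plane quartic of arithmetic genus $3$, and because $T_pS$ is tangent to $S$ at $p$ the curve $E_p$ is singular at $p$ with $\delta$-invariant at least $1$. Every entry in the list is a plane quartic whose geometric genus equals $3$ minus the total $\delta$-invariant of its singularities (a node or a cusp contributes $1$, a tacnode contributes $2$), so the seven cases are precisely: one node or one cusp and nothing else ($p_g=2$); two nodes, or a node and a cusp, or a tacnode ($p_g=1$); three nodes ($p_g=0$). Thus the statement amounts to showing that for generic $S$ these are the only singularity configurations that occur as $p$ ranges over $S$, with the prescribed distribution of which singular point is $p$.

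First I would treat the local singularity of $E_p$ at $p$. Choosing affine coordinates with $p$ at the origin and $T_pS = \{z=0\}$, write $S$ locally as $z = q_2(x,y) + q_3(x,y) + q_4(x,y)$; then $E_p$ is the affine plane curve $q_2 + q_3 + q_4 = 0$. For generic $S$ and generic $p$ the quadratic form $q_2$ is nondegenerate, giving a node; the locus where the rank of the second fundamental form drops is the parabolic curve, a curve in $S$, and there $E_p$ acquires a cusp; at the finitely many special points of the parabolic curve one expects at worst a tacnode ($A_3$). This is controlled by a dimension count on the incidence variety $\{(S,p)\mid E_p \text{ has an } A_4 \text{ or worse singularity at } p\}$, using the semicontinuity of corank and Milnor number quoted before Lemma \ref{generic} together with the fact that one can exhibit quartics realizing each of $A_1, A_2, A_3$ at a prescribed point; the aim is to show the $A_4$-or-worse locus has image of dimension at most $33$ in $W=\mathbb{P}^{34}$.

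Next I would handle singularities of $E_p$ away from $p$. A point $q \neq p$ on $E_p$ is singular exactly when $T_pS = T_qS$, i.e. $(p,q)$ lies over the double locus of the dual surface $\check S \subset \check{\mathbb{P}^{3}}$. For generic $S$ this double locus is a curve, so the locus of $p$ with $E_p$ singular at a second point is a curve in $S$; generically the second singularity is a node, yielding the simple Gauss double point, and imposing the parabolic condition at one of the two tangency points (now a codimension-two, hence finite, condition) yields the node-plus-cusp cases, the two versions differing by which tangency point is the parabolic one. Three simultaneous tangency points — the triple locus of $\check S$, finite for generic $S$ — give the three-node case. Throughout, one excludes configurations not on the list (two cusps, an $A_4$, a $D_4$ or higher tangency type, a quadruply tangent plane, and so on) by showing the corresponding incidence locus in $W$ times the relevant space of points and planes has dimension at most $33$; these genericity arguments are of the same flavor as those in Lemma \ref{irreducible} and Proposition \ref{integral}.

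The main obstacle is the bookkeeping in these dimension counts: one must enumerate every way $E_p$ could fail to be on the list — higher unibranch singularities at $p$, higher-order or multiple extra tangencies, and coincidences among these — and verify that each contributes at most dimension $33$ to $W$. The cleanest organization is via the stratification of the dual surface $\check S$ by the standard local models of a generic projective dual (smooth points, the cuspidal edge, the double curve, swallowtail points, triple points), which a priori bounds the possible contact types of $T_pS$ with $S$. Establishing that a generic quartic has a dual surface with exactly these generic singularities — and no worse — is the technical heart of the argument, and is precisely where I would rely on the analysis in \cite{Wi14}.
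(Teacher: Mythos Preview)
The paper does not supply its own proof of this proposition: it is quoted directly from \cite[Prop.~2.1.7]{Wi14}, with only the remark that the irreducibility hypothesis used there is in fact a generic (not just very general) condition by Proposition~\ref{integral}. So there is no in-paper argument to compare your proposal against.

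Your outline is a reasonable sketch of the strategy behind the cited result --- analyze the local $A_k$-type of $E_p$ at $p$ via the second fundamental form, then control extra tangencies via the double and triple loci of the dual surface, and exclude all non-listed configurations by dimension counts --- and you yourself acknowledge that the technical core (that a generic quartic has dual surface with only the standard stratification, equivalently that the Gauss map is locally stable) is exactly what is carried out in \cite{Wi14}. In that sense your proposal is not an independent proof but a roadmap to the reference the paper already cites; this is fine, but be aware that the substantive work (the exhaustive exclusion of $A_{\ge 4}$ at $p$, $D$-type points, bitangent planes with a cusp at a non-$p$ point, quadritangent planes, etc.) lives entirely in that reference and is not something you have actually done here.
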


\begin{remark}
In \cite{Wi14}, this proposition is stated for a very general smooth quartic surface, since the condition of $Pic(S)\cong\mathbb{Z}$ is assumed there to make sure that all the hyperplane sections are irreducible and reduced. But by Proposition \ref{integral}, we know this is a generic condition.
\end{remark}

The references for the following assertions are \cite{Wi14} and \cite{Pi78}. When the quotation mark is used, it is referred to the classification in Proposition \ref{type}.

The generic points on $S$ are ``generic points". Let $\phi\colon S\to{\mathbb{P}^{3}}^{\vee}$ be the Gauss map which sends the point on the surface to its projective tangent plane in the dual space. Then the degree of the dual surface $S^{*}:=\phi(S)$ is 36. The ramification locus of $\phi$ is a smooth curve of degree 32 which consists of ``simple parabolic points", ``parabolic Gauss double points" and ``Gauss swallowtails". This curve is called the parabolic curve or the cuspidal curve. If we denote this curve by $C_{par}$, then the generic points on $C_{par}$ are ``simple parabolic points". There are 1920 ``parabolic Gauss double points" and 320 ``Gauss Swallowtails" on $C_{par}$. If we denote the dual curve by $C_{par}^{*}:=\phi(C_{par})$, then $\phi|_{C_{par}}\colon C_{par}\to C_{par}^{*}$ ramifies at ``Gauss swallowtails" and $C_{par}^{*}$ is smooth outside of the images of ``Gauss swallowtails". The curve $C_{par}^{*}$ has degree 96.

There is another important curve on $S$, which is called the double-cover curve. It is an irreducible curve which consists of ``simple Gauss double points", ``parabolic Gauss double points", ``dual to parabolic Gauss double points", ``Gauss swallowtails" and ``Gauss triple points". We denote this curve by $C_{d}$. The generic points on $C_{d}$ are ``simple Gauss double points". There are 9600 ``Gauss triple points", 1920 ``parabolic Gauss double points", 1920 ``dual to parabolic Gauss double points" and 320 ``Gauss swallowtails" on $C_{d}$. Notice that the 9600 ``Gauss triple points" give us the $9600/3=3200$ nodal rational curves in $|\mathcal{O}(1)|$. $C_{d}$ has nodal singularities at ``Gauss triple points", cuspidal singularities at ``dual to parabolic Gauss double points" and is smooth elsewhere. If we denote the dual curve by $C_{d}^{*}:=\phi(C_{d})$, then $\phi|_{C_{d}}\colon C_{d}\to C_{d}^{*}$ ramifies at ``parabolic double points" and ``Gauss swallowtails". $C_{d}^{*}$ is smooth outside of the images of ``Gauss triple points" and ``parabolic double points". If we call the images of ``Gauss triple points" triple points and the images of ``parabolic double points" cuspidal double points, then $C_{d}^{*}$ has three-branched nodes at triple points and cusps at cuspidal double points. $C_{d}^{*}$ has degree 480.

The parabolic curve $C_{par}$ intersects with the double-cover curve $C_{d}$ at ``parabolic Gauss double points" and ``Gauss swallowtails". The intersections at ``parabolic Gauss double points" are transversal. But $C_{par}$ is tangent with $C_{d}$ at ``Gauss swallowtails" with multiplicity 2. For more details, see \cite[$\S 2.5.3$]{Wi14}.

Now what we are interested in are the formulas which relate those numerical terms. There are plenty of them (see \cite{Pi78} and \cite[Chap. IV \& V]{Kl77}). But we only need one of them in the following form.

\begin{prop}\label{formula}
\cite[$\S$5 Theorem 4 (III$_{b}$)]{Pi78} Let $S$ be a smooth quartic surface. Using the notations as above, we have
\[
b(\mu_{0}-2)=\rho+\delta_{b}+v_{b}''+i, \tag{$*$}
\]
where $b$ is the degree of $C_{d}^{*}$, $\mu_{0}$ is the degree of $S^{*}$, $\rho$ is the degree of $C_{d}$, $\delta_{b}=g_{a}(C_{d})-g(C_{d})$ is the difference between the arithmetic genus and the geometric genus of $C_{d}$, $v_{b}''$ is the degree of the ramification divisor of the normalization map $\tilde{C_{d}^{*}}\to C_{d}^{*}$, and $i=C_{d}\cdot C_{par}$.
\end{prop}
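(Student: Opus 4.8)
The identity $(*)$ is one of the classical relations of Pl\"ucker--Salmon type for a smooth surface and its dual; the version quoted from \cite{Pi78} concerns the dual surface $S^{*}\subset{\mathbb{P}^3}^{\vee}$ of the smooth quartic $S$ (so $\mu_{0}=\deg S^{*}$) and its double curve $C_{d}^{*}$ (so $b=\deg C_{d}^{*}$), and the plan is to reconstruct it by intersection theory on the conormal correspondence. First I would set up $\mathcal{I}\subset\mathbb{P}^{3}\times{\mathbb{P}^3}^{\vee}$, the closure of $\{(p,T_{p}S):p\in S\}$: since $S$ is smooth, the first projection identifies $\mathcal{I}$ with $S$ and the second with the Gauss map $\phi$, so $S$ carries two line bundles $H=\mathcal{O}_{S}(1)$ and $H^{\vee}=\phi^{*}\mathcal{O}(1)=\mathcal{O}_{S}(3)$, with $(H^{\vee})^{2}=\mu_{0}$ and $K_{S}=0$. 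Then I would make every term of $(*)$ numerical. The curve $C_{d}$ is the reduced double-point locus of $\phi$, so its class in $\operatorname{Pic}(S)$ is determined by the double-point formula for $\phi$ together with the class $[S\cap\operatorname{Hess}(S)]$ of the parabolic curve; this pins down $\rho=C_{d}\cdot H$ and $C_{d}^{2}$, hence $g_{a}(C_{d})$ by adjunction on the K3 surface $S$, hence $\delta_{b}=g_{a}(C_{d})-g(C_{d})$ as the total $\delta$-invariant of $C_{d}$, whose singularities by Proposition \ref{type} are precisely ordinary nodes at the ``Gauss triple points'' and ordinary cusps at the ``dual to parabolic Gauss double points''. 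Dually, $v_{b}''$ is read off from the singularity types of $C_{d}^{*}$ (zero at its ordinary triple points, one at each cuspidal double point), and $i=C_{d}\cdot C_{par}$ from the fact recorded just before Proposition \ref{formula} that $C_{d}$ meets $C_{par}$ transversally at the ``parabolic Gauss double points'' and with contact of order two at the ``Gauss swallowtails''.

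Next I would obtain the left-hand side from a single global count done two ways. The Gauss map restricts to a degree-two map $C_{d}\to C_{d}^{*}$, hence, after normalizing, a degree-two cover $\bar\phi\colon\tilde{C_{d}}\to\tilde{C_{d}^{*}}$; in particular $b=\tfrac{3}{2}\rho$ since $\phi$ is given by cubics, and Riemann--Hurwitz for $\bar\phi$ expresses $g(\tilde{C_{d}^{*}})$ through $g(\tilde{C_{d}})=g_{a}(C_{d})-\delta_{b}$ and the finitely many, explicitly located branch points of $\bar\phi$. Now pull back a general pencil of hyperplanes of ${\mathbb{P}^3}^{\vee}$ by $\phi$ to a base-point-resolved pencil $D_{t}\in|\mathcal{O}_{S}(3)|$ on $S$, whose general member is the normalization of the plane section $H_{t}\cap S^{*}$ and satisfies $D_{t}^{2}=\mu_{0}$, and compute the total ramification of the resulting fibration $S\dashrightarrow\mathbb{P}^{1}$. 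Evaluated globally --- by the Euler-characteristic/Riemann--Hurwitz identity, using $D_{t}^{2}=\mu_{0}$ --- this produces the left-hand side $b(\mu_{0}-2)$; evaluated by localizing, it is the sum of the contributions of the honest tangency points (returning $\rho$), of the non-normal points of $C_{d}^{*}$ (returning $\delta_{b}+v_{b}''$), and of the points of $C_{d}\cap C_{par}$ where two sheets of $S^{*}$ acquire a common tangent plane (returning $i$). Equating the two evaluations yields $(*)$.

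The hard part will be the local bookkeeping in this last step --- and it is the only place where genericity of $S$ is genuinely used. One must compute the exact contribution to the ramification of each special point (an ordinary triple point or a cuspidal double point of $C_{d}^{*}$, the image of a Gauss swallowtail, a point of $C_{d}\cap C_{par}$, and so on), and it is precisely Propositions \ref{integral} and \ref{type} that exclude all other singularity types, so that this weighted sum of local indices is finite, explicit, and collapses to the right-hand side of $(*)$. Everything else --- the double-point formula, adjunction, and the two Riemann--Hurwitz computations --- is routine.
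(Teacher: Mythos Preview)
The paper does not prove Proposition~\ref{formula}: it is quoted from Piene \cite[\S5, Theorem~4~(III$_b$)]{Pi78} and used as a black box. What follows the statement in the paper is a numerical verification in the generic case, not a proof, and then the formula is applied to the ``good'' (non-generic) surface in the final proof of Theorem~\ref{trinodes}. So there is no argument in the paper for you to compare your sketch against.

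Your outline is broadly in the spirit of how such Pl\"ucker--Piene identities are obtained --- via the conormal correspondence, a pencil of hyperplane sections on the dual side, and a local-versus-global ramification count --- and the numerics you record ($\phi^{*}\mathcal{O}(1)=\mathcal{O}_{S}(3)$, hence $b=\tfrac{3}{2}\rho$, $(H^{\vee})^{2}=\mu_{0}=36$) are correct. There is, however, a scope problem. You lean on Propositions~\ref{integral} and~\ref{type} to restrict the singularity types entering the local bookkeeping, so at best your argument would establish $(*)$ for a \emph{generic} smooth quartic. But the reason the paper quotes Piene here is precisely to apply $(*)$ to the ``good'' surface, which is \emph{not} generic: it carries a tangent curve with two nodes and one cusp, a type explicitly excluded by Proposition~\ref{type}. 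Piene's theorem in \cite{Pi78} is proved without any such genericity hypothesis, and the paper needs that generality. A derivation that depends on Proposition~\ref{type} is therefore inadequate for the use the paper actually makes of the formula; you would have to redo the local analysis allowing arbitrary (or at least the ``good'') singularity configurations, which is essentially what Lemmas~\ref{node}--\ref{ramification} do downstream.
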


Let us check this formula in the generic case. We have $b=480$ and $\mu_{0}$=36, so the left-hand side is
\[
b(\mu_{0}-2)=480\times(36-2)=16320.
\]
For the right-hand side, we have $\rho=320$. Since $C_{d}$ has 9600 nodal singularities and 1920 cuspidal singularities, $\delta_{b}=9600+1920=11520$. Since $C_{d}^{*}$ has 1920 cuspidal singularities, $v_{b}''=1920$. Finally, since $C_{par}$ and $C_{d}$ intersect transversally at 1920 ``parabolic Gauss double points" and intersect at 320 ``Gauss swallowtails" with multiplicity 2, we have $i=1920+2\times 320=2560$. Hence the right-hand side is
\[
\rho+\delta_{b}+v_{b}''+i=320+11520+1920+2560=16320,
\]
which is equal to the left-hand side.

Now suppose there exists a smooth quartic surface $S$ which admits a tangent curve with two nodes and one cusp, and all the other tangent curves are of the types in Proposition \ref{type}.  We call such a surface a \emph{good} surface. In this case, we will still have
\[
\mu_{0}=36,
\]
since the degree of the dual surface $S^{*}$ of a hypersurface $S$ of degree $d$ in $\mathbb{P}^{3}$ is $d(d-1)^{2}$ by \cite[P.362, IV,70]{Kl77}. On the other hand, we will also have
\[
b=480.
\]
The argument is as follows. By \cite[Prop. 1.4.4]{Wi14}, we have
\[
[C_{d}]+2[C_{par}]=\phi^{*}[S^{*}]+\phi^{*}[K_{\mathbb{P}^{3}}],
\]
which is an equality in the Chow group of 1-cycles on S, i.e. $CH_{1}(S)$. We know that $C_{d}\in\mathcal{O}_{S}(80)$ and $C_{par}\in\mathcal{O}_{S}(8)$ by \cite[Application 3, Lemma 7]{Pi78}. Since $\phi_{*}C_{d}=2C_{d}^{*}$, by the projection formula we get
\[
\begin{aligned}
(C_{d}+2C_{par})\cdot C_{d}&=(\phi^{*}S^{*}+\phi^{*}K_{\mathbb{P}^{3}})\cdot C_{d}\\
(C_{d}+2C_{par})\cdot C_{d}&=2(S^{*}\cdot C_{d}^{*})+2(K_{\mathbb{P}^{3}}\cdot C_{d}^{*})\\
80\times 80\times 4+2\times 8\times 80\times 4&=2\times 36\times {\rm deg}(C_{d}^{*})-2\times 4\times {\rm deg}(C_{d}^{*})\\
30720&=64\ {\rm deg}(C_{d}^{*})\\
{\rm deg}(C_{d}^{*})&=480.
\end{aligned}
\]

Hence in this case the left-hand side of the equation $(*)$ does not change, which is still 16320.

Now let us look at the right-hand side of the equation $(*)$ in this case. The degree $\rho=320$ of $C_{d}$ and the intersection number $i=80\times 8\times 4=2560$ stay the same. But we will see that $v_{b}''$ and $\delta_{b}$ are different.

We first claim that there are still 320 ``Gauss swallowtails" in this case. Since all the singularity types of the tangent curves in our surface are still the ones in Proposition 4.3, for each $p\in S$ there exist analytic coordinate charts $\varphi\colon (\mathbb{C}^{2},0)\to(S,p)$ and $\psi\colon (\mathbb{C}^{3},0)\to(\check{\mathbb{P}^{3}},x)$ such that the germ at 0 of $\psi^{-1}\circ\phi\circ\varphi$ is one of the following (\cite[Prop. 2.1.1]{Wi14} or \cite[(3.4)]{MS84}).
\[
\begin{aligned}
(nonparabolic\ germ)&\bullet (x,y)\mapsto(x,y,0),\\
(general\ parabolic\ germ)&\bullet (x,y)\mapsto(x^{3},x^{2},y),\\
(swallowtail\ germ)&\bullet (x,y)\mapsto(3x^{4}+x^{2}y,2x^{3}+xy,y),
\end{aligned}
\]
which is equivalent to the fact that the Gauss map has the following two properties (\cite[273--274]{MS84}).
\[
\begin{aligned}
&{\rm(i)\ zero\ is\ a\ regular\ value\ of}\ K,\\
&{\rm(ii)\ the\ cusps\ of\ the\ Gauss\ map\ are\ nondegenerate, }
\end{aligned}
\]
where $K$ is the determinant function of certain matrix associated with the projective second fundamental form \cite[p. 261]{MS84}.

But this implies that our smooth quartic surface $S$ has 320 ``Gauss swallowtails" by the following theorem, since ``Gauss swallowtails" are precisely the cusps of the Gauss map (\cite[$\S$2.2]{Wi14}).

\begin{theorem}
\cite[(2.5)]{MS84} Let $S\subset\mathbb{P}^{3}$ be a complex algebraic surface of degree $d$ satisfying the above two properties. Then there are precisely $2d(d-2)(11d-24)$ cusps of the Gauss map.
\end{theorem}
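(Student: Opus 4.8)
The plan is to realize the cusps of the Gauss map $\phi\colon S\to{\mathbb{P}^{3}}^{\vee}$ as the zero locus of a homomorphism of line bundles on the parabolic curve, and then to evaluate it by a Chern number computation. Recall that the parabolic curve $C_{par}$ is the locus where the projective second fundamental form $II\in H^{0}(S,\mathrm{Sym}^{2}\Omega^{1}_{S}\otimes\mathcal{N})$, with $\mathcal{N}:=N_{S/\mathbb{P}^{3}}=\mathcal{O}_{S}(d)$, degenerates; equivalently it is cut out on $S$ by the Hessian hypersurface, which has degree $4(d-2)$, so $C_{par}$ lies in $|\mathcal{O}_{S}(4(d-2))|$, linearly equivalent to $4(d-2)H$ with $H$ the hyperplane class. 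The hypothesis that $0$ is a regular value of $K$ says precisely that $C_{par}$ is a smooth curve and that $II$ has constant corank $1$ along it (a flat point, where $II$ vanishes, would be a point of corank $2$ and a singular point of $C_{par}$). Hence along $C_{par}$ the asymptotic direction $L:=\ker\big(II\colon T_{S}\to\Omega^{1}_{S}\otimes\mathcal{N}\big)$ is a genuine line subbundle of $T_{S}|_{C_{par}}$.

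The key geometric input, extracted from the swallowtail normal form $(x,y)\mapsto(3x^{4}+x^{2}y,\,2x^{3}+xy,\,y)$ listed above, is that the critical locus of $\phi$ is $C_{par}$ and that a point $p\in C_{par}$ is a cusp of $\phi$ (rather than a fold) exactly when the asymptotic direction $L_{p}$ is tangent to $C_{par}$ at $p$. Thus the cusps are the zeros of the composite
\[
\psi\colon T_{C_{par}}\hookrightarrow T_{S}|_{C_{par}}\twoheadrightarrow Q:=\big(T_{S}|_{C_{par}}\big)/L ,
\]
a homomorphism of line bundles on the smooth curve $C_{par}$. The nondegeneracy of the cusps is exactly the statement that each zero of $\psi$ is simple (and that $\psi\not\equiv 0$), so that
\[
\#\{\text{cusps}\}=\deg Q-\deg T_{C_{par}} .
\]

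It remains to compute the two degrees. For $\deg T_{C_{par}}=2-2g(C_{par})$ I would apply adjunction on $S$: since $K_{S}=\mathcal{O}_{S}(d-4)$ and $C_{par}\equiv 4(d-2)H$, one gets $2g(C_{par})-2=(K_{S}+C_{par})\cdot C_{par}=4d(d-2)(5d-12)$. For $\deg Q$ I would use the \emph{symmetry} of $II$: along $C_{par}$ the quadratic form $II$ has radical exactly $L$, hence descends to a fibrewise nondegenerate $\mathcal{N}$-valued quadratic form on the line bundle $Q$, and such a form is precisely an isomorphism $Q^{\otimes 2}\xrightarrow{\ \sim\ }\mathcal{N}|_{C_{par}}$; therefore $2\deg Q=\deg(\mathcal{N}|_{C_{par}})=4d^{2}(d-2)$, so $\deg Q=2d^{2}(d-2)$. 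Putting the pieces together,
\[
\#\{\text{cusps}\}=2d^{2}(d-2)+4d(d-2)(5d-12)=2d(d-2)(11d-24),
\]
which for $d=4$ recovers the $320$ Gauss swallowtails quoted earlier.

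The substantive points — and where I expect the real work to lie — are two. First, the geometric characterization of cusps as tangencies of the asymptotic line field with $C_{par}$, which is what the swallowtail germ encodes: for the generic parabolic germ $(x,y)\mapsto(x^{3},x^{2},y)$ the critical locus $\{x=0\}$ is transverse to the kernel direction $\partial_{x}$, whereas for the swallowtail germ the critical locus $\{y+6x^{2}=0\}$ is tangent to it at the origin. Second, translating the two genericity hypotheses into scheme-theoretic statements: regularity of $0$ for $K$ making $C_{par}$ a smooth curve of constant corank $1$ (so that $L$, and hence $\psi$, is everywhere defined), and nondegeneracy of the cusps making every zero of $\psi$ simple, so that the Chern number above is the honest number of cusps rather than a count with multiplicities. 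Once these are in place, the degree bookkeeping is routine. I would also note in passing that the cusps are the Thom--Boardman $\Sigma^{1,1}$ points of $\phi$, but since the target of $\phi$ is the singular dual surface $S^{*}$ and $\phi$ is far from a generic map, working on $C_{par}$ as above is cleaner than evaluating a Thom polynomial directly.
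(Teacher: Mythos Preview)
The paper does not supply a proof of this theorem: it is quoted verbatim from \cite{MS84} and used as a black box to conclude that a good quartic surface carries $320$ Gauss swallowtails. So there is no ``paper's own proof'' to compare against.

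Your argument is nonetheless a valid self-contained proof, and in fact it follows the line of the original McCrory--Shifrin computation. The identification of cusps as the tangencies between the asymptotic line field $L$ and the parabolic curve $C_{par}$ is exactly the Thom--Boardman $\Sigma^{1,1}$ criterion specialized to the Gauss map, and your verification on the two local models is correct (for the swallowtail germ the critical set $\{y+6x^{2}=0\}$ is tangent to $\ker d\phi=\langle\partial_{x}\rangle$ at the origin, while for the fold germ they are transverse). The key trick---observing that the rank-$1$ symmetric form $II$ descends to a nondegenerate form on the line bundle $Q=T_{S}|_{C_{par}}/L$, forcing $Q^{\otimes 2}\cong\mathcal{N}|_{C_{par}}$---is what makes $\deg Q$ computable without knowing $\deg L$ separately, and your numbers check: with $C_{par}\in|\mathcal{O}_{S}(4(d-2))|$ one gets $-\deg T_{C_{par}}=4d(d-2)(5d-12)$ and $\deg Q=2d^{2}(d-2)$, summing to $2d(d-2)(11d-24)$.

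The only places I would tighten the write-up are the two you already flag: making explicit that hypothesis~(i) rules out flat points (so $II$ has constant corank $1$ on $C_{par}$ and $L$ is an honest subbundle), and that hypothesis~(ii) is precisely the statement that $\psi$ vanishes to order $1$ at each zero. Both are straightforward once stated.
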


\begin{remark}
The above conditions are also equivalent to the statement that the projection map $\Gamma:=\{(x,H)\in S\times(\mathbb{P}^{3})^{\vee}|x\in H\}\to(\mathbb{P}^{3})^{\vee}$ is locally stable. See \cite[\S 3]{MS84} or \cite[\S 2.1]{Wi14}.
\end{remark}

To summarize what we have done so far, we have

\begin{lemma}\label{summary}
Let $S$ be a good surface. Then $S$ has 320 ``Gauss swallowtails". Using the notation in Proposition \ref{formula}, we have
\[
b=480;\ \mu_{0}=36;\ \rho=320;\ i=2560.
\]
\end{lemma}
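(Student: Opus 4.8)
The plan is to assemble the proof of Lemma \ref{summary} by collecting the four numerical facts in its statement, each of which has already been established (or can be read off immediately) from the discussion above. The first equality, $\mu_{0}=36$, is the degree of the dual surface $S^{*}$ of a smooth quartic; since the degree of the dual of a smooth degree-$d$ hypersurface in $\mathbb{P}^{3}$ is $d(d-1)^{2}$, plugging in $d=4$ gives $\mu_{0}=36$, and this value is intrinsic to the degree of $S$ and so is unaffected by the special singularity of one tangent curve. So I would simply cite \cite[P.362, IV, 70]{Kl77} for this.

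Next I would record $b=480$, the degree of $C_{d}^{*}$. This is exactly the computation carried out in the paragraphs preceding the lemma: starting from the relation $[C_{d}]+2[C_{par}]=\phi^{*}[S^{*}]+\phi^{*}[K_{\mathbb{P}^{3}}]$ in $CH_{1}(S)$ from \cite[Prop.\ 1.4.4]{Wi14}, using $C_{d}\in\mathcal{O}_{S}(80)$ and $C_{par}\in\mathcal{O}_{S}(8)$ from \cite[Application 3, Lemma 7]{Pi78}, and applying the projection formula with $\phi_{*}C_{d}=2C_{d}^{*}$, one intersects both sides with $C_{d}$ and solves the resulting linear equation to get $\deg(C_{d}^{*})=480$. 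I would present this as "$b=480$ by the computation above."

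For $\rho=320$ and $i=2560$: these are noted in the text to be unchanged from the generic case. The degree $\rho$ of $C_{d}$ is $320$ because $C_{d}\in\mathcal{O}_{S}(80)$ cut on the quartic $S$ gives degree $80\times 4=320$, wait — more carefully, $C_{d}$ has degree $320$ as stated in \S5 and this only depends on the class $\mathcal{O}_{S}(80)$, hence is unchanged for a good surface. The intersection number $i=C_{d}\cdot C_{par}=80\times 8\times 4=2560$ likewise depends only on the classes $\mathcal{O}_{S}(80)$ and $\mathcal{O}_{S}(8)$ and the degree of $S$, so it too is unchanged. Finally, the claim that a good surface still has $320$ Gauss swallowtails is the content of the paragraph invoking the local normal forms of the Gauss map from \cite[Prop.\ 2.1.1]{Wi14}, \cite[(3.4)]{MS84}: since all tangent-curve singularity types on a good surface are still among those of Proposition \ref{type}, the two genericity properties (zero a regular value of $K$; nondegenerate cusps) hold, and \cite[(2.5)]{MS84} then gives $2d(d-2)(11d-24)=2\cdot 4\cdot 2\cdot 20=320$ cusps of the Gauss map, which are exactly the Gauss swallowtails by \cite[\S 2.2]{Wi14}.

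There is no real obstacle here — the lemma is a bookkeeping summary, and every ingredient has been argued in the surrounding text. The only point requiring a sentence of care is the stability claim for the Gauss map of a good surface, i.e.\ that the special tangent curve (with two nodes and one cusp, still an $A_{2}$ configuration) does not force a degenerate cusp of the Gauss map; but this follows because its singularity types are among those in Proposition \ref{type}, which is precisely the hypothesis under which the normal forms of \cite[Prop.\ 2.1.1]{Wi14} apply. So my proof would be roughly: "By \cite[P.362, IV, 70]{Kl77}, $\mu_{0}=d(d-1)^{2}=36$. The computation preceding the lemma shows $b=480$. Since $\rho$ and $i$ depend only on the classes $[C_{d}]=\mathcal{O}_{S}(80)$, $[C_{par}]=\mathcal{O}_{S}(8)$ and $\deg S=4$, they are unchanged: $\rho=320$ and $i=2560$. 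Finally, as all tangent-curve singularities of a good surface appear in Proposition \ref{type}, the Gauss map satisfies properties (i) and (ii), so by \cite[(2.5)]{MS84} there are $2\cdot4\cdot2\cdot20=320$ Gauss swallowtails."
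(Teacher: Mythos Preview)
Your proposal is correct and matches the paper's own approach exactly: the paper presents Lemma~\ref{summary} explicitly as a summary (``To summarize what we have done so far, we have \ldots'') with no separate proof block, and each of the four numerical facts together with the swallowtail count is argued in the paragraphs immediately preceding the lemma in precisely the way you describe. The only very minor imprecision is the phrase ``its singularity types are among those in Proposition~\ref{type}'': the special tangent curve as a whole (two nodes and one cusp) is not one of the curve types listed there, but what matters---and what the paper and you both intend---is that the \emph{local} singularity at each point $p\in S$ is still a node, cusp, or tacnode, so the Gauss map germ at every point is one of the three normal forms and \cite[(2.5)]{MS84} applies.
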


Now in order to determine $\delta_{b}$ and $v_{b}''$, we need to look more carefully at the Gauss map $\phi$. 

\begin{lemma}\label{node}
Let $S$ be a good surface. Let $p,q\in C$ be the nodal singularity points. Then the germ of the singularity of $C_{d}$ at $p$ or $q$ is
\[
\{(x,y)\in\mathbb{C}^{2}|y^{3}=x^{2}\}\cup \{(x,y)\in\mathbb{C}^{2}|y=-x\}
\]
\end{lemma}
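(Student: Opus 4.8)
The plan is to pass to the dual space and recover the germ of $C_d$ at $p$ from the local structure of the Gauss map $\phi\colon S\to(\mathbb{P}^3)^\vee$. Write $C$ for the special tangent curve of the good surface $S$, with nodes $p,q$ and cusp $r$. Since $C=S\cap T_pS$ is singular exactly at $p,q,r$ we have $T_pS=T_qS=T_rS=:H_0$, so $\phi(p)=\phi(q)=\phi(r)=\eta_0:=[H_0]$, and over a neighbourhood of $\eta_0$ the dual surface $S^*$ is the union of three sheets $\Sigma_p=\phi(U_p),\ \Sigma_q=\phi(U_q),\ \Sigma_r=\phi(U_r)$ for small neighbourhoods $U_p,U_q,U_r$ of $p,q,r$ in $S$. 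Because $p,q$ are non-parabolic (nodes, not cusps, of $C$), $\phi$ restricts to a biholomorphism on $U_p$ and on $U_q$ and $\Sigma_p,\Sigma_q$ are smooth; because $r$ is parabolic and $S$ is good — so that the two genericity conditions of \cite{MS84} on the Gauss map hold, as verified for good surfaces in the discussion preceding Lemma \ref{summary} — the germ of $\phi$ at $r$ is the general parabolic germ $(u,v)\mapsto(u^3,u^2,v)$ and $\Sigma_r$ carries a cuspidal edge through $\eta_0$. By the description of the double-cover curve in \cite{Wi14}, on $U_p$ one has $C_d\cap U_p=(\phi|_{U_p})^{-1}\bigl((\Sigma_p\cap\Sigma_q)\cup(\Sigma_p\cap\Sigma_r)\bigr)$, because a plane close to $H_0$ that is tangent to $S$ at two distinct points must be tangent near two distinct members of $\{p,q,r\}$, while $\phi|_{U_p}$ is injective.

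For the first branch: by biduality the tangent plane to $\Sigma_p$ (resp. $\Sigma_q$) at $\eta_0$ is the point $p$ (resp. $q$) of $\mathbb{P}^3=(\mathbb{P}^3)^{\vee\vee}$, viewed as a hyperplane of $(\mathbb{P}^3)^\vee$; since $p\neq q$ these hyperplanes are distinct, so $\Sigma_p$ and $\Sigma_q$ meet transversally in a smooth curve germ at $\eta_0$, whose preimage under the biholomorphism $\phi|_{U_p}$ is a smooth branch of $C_d$ at $p$, with tangent line corresponding to the pencil of hyperplanes through the line $\overline{pq}$.

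For the second branch: choose affine coordinates $(\xi_1,\xi_2,\xi_3)$ on $(\mathbb{P}^3)^\vee$ centred at $\eta_0$ in which $\phi|_{U_r}$ takes the normal form above, so $\Sigma_r=\{\xi_1^2=\xi_2^3\}$ with cuspidal edge the $\xi_3$-axis; its limiting tangent plane along this edge at $\eta_0$ is $\{\xi_1=0\}$, which by biduality is the point $r$. The crucial point is that $\Sigma_p$ is transverse to the cuspidal edge at $\eta_0$; granting it, $\Sigma_p$ is a graph $\xi_3=g(\xi_1,\xi_2)$ over the $(\xi_1,\xi_2)$-plane, so in the coordinates $(\xi_1,\xi_2)$ on $\Sigma_p$ the curve $\Sigma_p\cap\Sigma_r$ is exactly $\{\xi_1^2=\xi_2^3\}$, an ordinary cusp with tangent line $\{\xi_1=0\}$ — corresponding to the pencil of hyperplanes through $\overline{pr}$. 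Pulling this back by $\phi|_{U_p}$ gives a cuspidal branch of $C_d$ at $p$. The two branches are transverse: their tangent lines agree only if the pencils through $\overline{pq}$ and through $\overline{pr}$ coincide, i.e. only if $p,q,r$ are collinear; but then the line through them would meet the quartic curve $C$ with total multiplicity $\geq 6$ and hence be a component of $C$, contradicting the irreducibility of the hyperplane sections of a good surface. Therefore the germ of $C_d$ at $p$ is an ordinary cusp together with a smooth branch transverse to its tangent line; this singularity (a $D_5$) has no analytic moduli, so in suitable analytic coordinates on $S$ at $p$ it is $\{y^3=x^2\}\cup\{y=-x\}$, which is the stated form.

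The step I expect to be the main obstacle is the transversality of the smooth sheet $\Sigma_p$ to the cuspidal edge of $\Sigma_r$ at $\eta_0$ — equivalently, that the hyperplane $p$ of $(\mathbb{P}^3)^\vee$ does not contain the cuspidal-edge direction — since without it $\Sigma_p\cap\Sigma_r$ breaks into more branches and the cusp degenerates. I would establish it either by a dimension count inside the family $X$, showing that the locus of good surfaces for which the relative position degenerates at some tangent point is a proper subvariety (so the surface furnished by Proposition \ref{existence} may be chosen to avoid it), or more conceptually by noting that the relative position of $\Sigma_p$ and $\Sigma_r$ depends only on the local geometry of $S$ near $p$ and near $r$, hence is the same as at a ``dual to parabolic Gauss double point'' of a general quartic, where $C_d$ is known to be an ordinary cusp by \cite{Wi14}.
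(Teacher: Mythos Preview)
Your approach is the same as the paper's: pass to the dual space, identify the three sheets $\Sigma_p,\Sigma_q,\Sigma_r$ of $S^*$ over $\eta_0$, intersect them pairwise, and pull back by the local biholomorphism $\phi|_{U_p}$. The paper carries this out by simply writing down, in one common chart on $(\mathbb{P}^3)^\vee$, the three local normal forms $\phi_p\colon(x,y)\mapsto(x,y,0)$, $\phi_q\colon(x,y)\mapsto(x,y,x+y)$, $\phi_r\colon(x,y)\mapsto(x^3,x^2,y)$ and reading off the intersections directly; the only justification offered for the compatibility of these charts is the sentence ``the three branches in the dual space are in general position.''

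Your write-up is strictly more careful than the paper's. You prove two of the three ingredients hidden in that sentence --- the transversality of $\Sigma_p$ and $\Sigma_q$ via biduality, and the transversality of the two resulting branches of $C_d$ via the non-collinearity of $p,q,r$ --- and you correctly isolate the third, namely that $\Sigma_p$ is transverse to the cuspidal edge of $\Sigma_r$, as the one step that is not automatic. The paper does not address this step either; it is exactly what is being assumed when one writes $\Sigma_p=\{\xi_3=0\}$ in the chart where the cuspidal edge is the $\xi_3$-axis. Of your two proposed fixes, the second (comparing to a ``dual to parabolic Gauss double point'' on a general quartic, where the local picture near $p$ and near $r$ is identical and the answer is known from \cite{Wi14}) is cleaner and fits the spirit of the section; the dimension count would also work but is heavier than necessary.
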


\begin{proof}
To determine the singularity of $C_{d}$ at $p,q$, we recall the Gauss map near the points $p,q$ and $r$, where $r\in C$ is the cusp. Note that the three branches in the dual space are in general position.
\[
\begin{aligned}
(near\ p)&\bullet\phi_{p}: (x,y)\mapsto(x,y,0),\\
(near\ q)&\bullet\phi_{q}: (x,y)\mapsto(x,y,x+y),\\
(near\ r)&\bullet\phi_{r}: (x,y)\mapsto(x^3,x^2,y).
\end{aligned}
\]
Considering the intersection of the local images of $\phi_{p}$ and $\phi_{q}$, we get a curve
\[
(x,-x,0),\ x\in\mathbb{C}.
\]
Its preimage under $\phi_{p}$ is
\[
(x,-x),\ x\in\mathbb{C}.
\]
Considering the intersection of the local images of $\phi_{p}$ and $\phi_{r}$, we get a curve
\[
(x^3,x^2,0),\ x\in\mathbb{C}.
\]
Its preimage under $\phi_{p}$ is
\[
(x^3,x^2),\ x\in\mathbb{C}.
\]
Hence the germ of the singularity of $C_{d}$ at $p$ has the form stated in the lemma. The germ of the singularity of $C_{d}$ at $q$ can be calculated similarly. 
\end{proof}

\begin{lemma}\label{cusp}
Let $S$ be a good surface. Let $r\in C$ be the cusp singularity point. Then $C_{d}$ has a tacnode at $r$.
\end{lemma}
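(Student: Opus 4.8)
The plan is to carry out the same local analysis as in the proof of Lemma \ref{node}, now at the cusp $r$ of $C$. Recall the normal forms of the Gauss map near the two nodes $p,q$ and the cusp $r$, with the three image branches in general position in $\check{\mathbb{P}^3}$: $\phi_p\colon(x,y)\mapsto(x,y,0)$, $\phi_q\colon(x,y)\mapsto(x,y,x+y)$, $\phi_r\colon(x,y)\mapsto(x^3,x^2,y)$. A point $s\in S$ lies on $C_d$ precisely when $T_sS$ is tangent to $S$ at some point other than $s$. Since $H=T_rS$ is tangent to $S$ exactly at $p,q,r$ and the Gauss map is finite, for $s$ sufficiently close to $r$ any further tangency point of $T_sS$ lies near $p$, near $q$, or near $r$; the last possibility is excluded because $\phi_r$ is injective. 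So the germ of $C_d$ at $r$ is the union of the two branches $\phi_r^{-1}(\mathrm{Im}\,\phi_r\cap\mathrm{Im}\,\phi_p)$ and $\phi_r^{-1}(\mathrm{Im}\,\phi_r\cap\mathrm{Im}\,\phi_q)$, and it remains to identify them.

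First I would compute the branch coming from $p$. As $\mathrm{Im}\,\phi_p=\{w=0\}$, the intersection $\mathrm{Im}\,\phi_r\cap\mathrm{Im}\,\phi_p$ is the cuspidal cubic $\{(t^3,t^2,0)\}$ in $\check{\mathbb{P}^3}$, and pulling it back along the injective map $\phi_r$ gives the smooth curve $\{y=0\}$ in the source. Next, the branch from $q$: since $\mathrm{Im}\,\phi_q=\{w=u+v\}$, the intersection $\mathrm{Im}\,\phi_r\cap\mathrm{Im}\,\phi_q$ pulls back to $\{y=x^3+x^2\}=\{y=x^2(1+x)\}$, which is again smooth. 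Finally, I would observe that the two smooth branches $\{y=0\}$ and $\{y=x^2(1+x)\}$ are tangent at the origin and meet there with intersection multiplicity exactly $2$, which after an analytic change of coordinates is the equation $\{y(y-x^2)=0\}$ of an $A_3$ singularity. Hence $C_d$ has a tacnode at $r$.

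The step that needs the most care is showing that these two branches exhaust the germ of $C_d$ at $r$: one has to know that for $s$ close enough to $r$ every additional tangency point of $T_sS$ really lies near one of $p,q,r$, which uses the finiteness of the Gauss map and the fact that $p,q,r$ are precisely the tangency points of the tritangent plane $H$. A second delicate point is that the contact order of the two branches is exactly $2$ and no higher — this is where the general-position hypothesis of Lemma \ref{node} enters, concretely ensuring that $\mathrm{Im}\,\phi_q$ does not contain the tangent direction of the cuspidal cubic $\mathrm{Im}\,\phi_r\cap\{w=0\}$; otherwise the second branch would meet $\{y=0\}$ to order $\geq 3$ and produce a worse $A_k$ singularity. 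Finally, since $\phi_r$ is not an immersion at $r$ one should check that the set-theoretic preimages are reduced smooth curves, which is immediate from the injectivity of $\phi_r$.
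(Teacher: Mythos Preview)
Your proof is correct and follows essentially the same approach as the paper: both compute the preimages under $\phi_r$ of $\mathrm{Im}\,\phi_r\cap\mathrm{Im}\,\phi_p=\{(x^3,x^2,0)\}$ and $\mathrm{Im}\,\phi_r\cap\mathrm{Im}\,\phi_q=\{(x^3,x^2,x^3+x^2)\}$, obtaining the smooth branches $\{y=0\}$ and $\{y=x^3+x^2\}$ which meet to order exactly $2$, hence a tacnode. Your write-up is in fact more careful than the paper's, since you justify why these two branches exhaust the germ of $C_d$ at $r$ and why the contact order is exactly $2$ (via the general-position hypothesis), points the paper leaves implicit.
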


\begin{proof}
By the calculations in Lemma \ref{node}, the intersection of the local images of $\phi_{p}$ and $\phi_{r}$ is
\[
(x^3,x^2,0),\ x\in\mathbb{C}.
\]
Its preimage under $\phi_{r}$ is
\[
(x,0),\ x\in\mathbb{C}.
\]
The intersection of the local images of $\phi_{q}$ and $\phi_{r}$ is
\[
(x^3,x^2,x^3+x^2),\ x\in\mathbb{C}.
\]
Its preimage under $\phi_{r}$ is
\[
(x,x^3+x^2),\ x\in\mathbb{C}.
\]
Hence the singularity of $C_{d}$ at $r$ is a tacnode. 
\end{proof}

\begin{lemma}\label{image}
Let $S$ be a good surface. Let $p\in C$ be a nodal singularity point. Then the singularity of $C_{d}^{*}$ at $\phi(p)$ has three local branches:
\begin{align*}
&\{(x,y,z)\in\mathbb{C}^{3}|y^{3}=x^{2}, z=0\},\\
&\{(x,y,z)\in\mathbb{C}^{3}|y^{3}=x^{2}, z=x+y\},\\
&\{(x,y,z)\in\mathbb{C}^{3}|y=-x, z=0\}.
\end{align*}
\end{lemma}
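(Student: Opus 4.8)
The plan is to exploit that the three singular points $p$, $q$, $r$ of the distinguished tangent curve $C$ all collapse to a single point under the Gauss map $\phi$. Since $C=S\cap T_{p}S$ is singular at each of $p$, $q$, $r$ and $S$ is smooth, each of these points has $T_{p}S$ as its projective tangent plane, so $\phi(p)=\phi(q)=\phi(r)=:\xi$. Moreover $C$ is an irreducible plane quartic, hence of arithmetic genus $3$; its total $\delta$-invariant is already $1+1+1=3$ at $p$, $q$, $r$ (two nodes and one cusp), so it has no further singular point, and therefore no point of $S$ other than $p$, $q$, $r$ has tangent plane $T_{p}S$, i.e.\ $\phi^{-1}(\xi)=\{p,q,r\}$. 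As $\phi$ is proper with finite fibre over $\xi$, it is finite over a neighbourhood of $\xi$, so a small neighbourhood of $\xi$ pulls back to a disjoint union of neighbourhoods of $p$, $q$, $r$. Consequently the germ of $C_{d}^{*}=\phi(C_{d})$ at $\xi$ is exactly the union of the $\phi$-images of the germs of $C_{d}$ at $p$, at $q$ and at $r$.

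I would then substitute the germs already determined. By Lemma \ref{node} the germ of $C_{d}$ at $p$, and likewise at $q$, is the union of a cuspidal branch $\{y^{3}=x^{2}\}$ and a line $\{y=-x\}$; by Lemma \ref{cusp} the germ at $r$ is the tacnode $\{y=0\}\cup\{y=x^{3}+x^{2}\}$. The proofs of those lemmas also record the local forms of $\phi$ in one common coordinate chart $(x,y,z)$ centred at $\xi$: $\phi_{p}(x,y)=(x,y,0)$, $\phi_{q}(x,y)=(x,y,x+y)$ and $\phi_{r}(x,y)=(x^{3},x^{2},y)$. Pushing the six local branches forward and matching images, one reads off: the two line branches, from $p$ and from $q$, both have image $\{y=-x,\ z=0\}$; the cuspidal branch from $p$ and the branch $\{y=0\}$ from $r$ both have image $\{y^{3}=x^{2},\ z=0\}$; and the cuspidal branch from $q$ and the branch $\{y=x^{3}+x^{2}\}$ from $r$ both have image $\{y^{3}=x^{2},\ z=x+y\}$. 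A quick check that these three curves pairwise meet only at $\xi$ then shows that the germ of $C_{d}^{*}$ at $\xi$ is precisely the union of the three branches in the statement.

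The argument is essentially bookkeeping once the three germs of $C_{d}$ and the three local forms of $\phi$ are in hand, and the step I would flag as the main (and only minor) obstacle is verifying that the local coordinates on the dual side used in Lemmas \ref{node} and \ref{cusp} really constitute a single chart around $\xi$, so that the three images may be compared and the coincidences above are genuine equalities of germs. This is already arranged in the proof of Lemma \ref{node}, where $\phi_{p}$, $\phi_{q}$, $\phi_{r}$ are written simultaneously in coordinates $(x,y,z)$ that realise the three sheets of $S^{*}$ near $\xi$ as $\{z=0\}$, $\{z=x+y\}$ and $\{(t^{3},t^{2},z)\}$. As a sanity check, the answer obtained is exactly the double curve of $S^{*}$ near its triple point $\xi$, namely the union of the three pairwise intersections of these sheets.
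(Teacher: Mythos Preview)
Your proposal is correct and follows the same approach as the paper, which proves the lemma in a single sentence: ``This follows from Lemma~\ref{node} and Lemma~\ref{cusp}.'' Your write-up simply unpacks that sentence---identifying $\phi^{-1}(\xi)=\{p,q,r\}$, pushing forward the six local branches of $C_{d}$ via the three local forms $\phi_{p},\phi_{q},\phi_{r}$ already recorded in those lemmas, and observing that they coincide in pairs to give the three stated branches---so the two arguments are essentially identical, with yours being the explicit version.
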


\begin{proof}
This follows from Lemma \ref{node} and Lemma \ref{cusp}. 
\end{proof}

\begin{lemma}\label{ramification}
(1) If a curve $C$ has a trinodal singularity at $p$, then the normalization map $\pi\colon\tilde{C}\to C $ does not ramify at the preimages of $p$.  

(2) If a curve $C$ has a cusp singularity at $p$, then the ramification divisor of the normalization map $\pi\colon\tilde{C}\to C$ has degree 1 at $\pi^{-1}(p)$.

(3) If a curve $C$ has a singularity at $p$ which is analytically isomorphic to the singularity in Lemma \ref{image}, then the ramification divisor of the normalization map $\pi\colon\tilde{C}\to C$ has degree 2.
\end{lemma}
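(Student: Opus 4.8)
The plan is to reduce everything to a local, branch-by-branch analytic computation. Recall that the ramification divisor of the normalization $\nu\colon\tilde C\to C$ is the effective divisor on $\tilde C$ attached to the torsion sheaf $\Omega_{\tilde C/C}$; it is supported over $\mathrm{Sing}(C)$, and its degree is additive over the analytic branches of $C$ at a singular point. The contribution of one branch at a point $p$ is computed as follows: fix a local analytic embedding of a neighbourhood of $p$ into $\mathbb{C}^n$, let $t$ be a uniformizer of the point $\tilde p\in\tilde C$ lying on that branch, and write the composite $\tilde C\to C\hookrightarrow\mathbb{C}^n$ near $\tilde p$ as $t\mapsto\gamma(t)=(\gamma_1(t),\dots,\gamma_n(t))$ with $\gamma(0)=0$; then the multiplicity of the ramification divisor at $\tilde p$ equals $\min_i\operatorname{ord}_t\gamma_i'(t)$, i.e.\ the order to which the velocity vector $\gamma'(t)$ vanishes. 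This is an analytic invariant of the branch, and under this description an ordinary node contributes $0$ while an ordinary cusp contributes $1$, which matches the value $v_b''=1920$ used in the generic-case verification above. So it suffices to carry out this computation for the three singularity types.

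For (1), an ordinary triple point is analytically $\{(y-\lambda_1x)(y-\lambda_2x)(y-\lambda_3x)=0\}$ with the $\lambda_k$ pairwise distinct; each branch is smooth, parametrized by $t\mapsto(t,\lambda_kt)$ with velocity $(1,\lambda_k)\neq 0$, so every branch contributes $0$ and $\nu$ is unramified over $p$. For (2), an ordinary cusp is analytically $\{y^2=x^3\}$, a single branch parametrized by $t\mapsto(t^2,t^3)$ with velocity $(2t,3t^2)=t\cdot(2,3t)$, which vanishes to order exactly $1$; hence the ramification divisor has degree $1$ there.

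For (3), the singularity of Lemma \ref{image} has the three branches listed in its statement. The branch $\{y^3=x^2,\,z=0\}$ is a cusp, parametrized by $t\mapsto(t^3,t^2,0)$ with velocity $t\cdot(3t,2,0)$, contributing $1$; the branch $\{y^3=x^2,\,z=x+y\}$ is the same cusp carried into the plane $z=x+y$, parametrized by $t\mapsto(t^3,t^2,t^3+t^2)$ with velocity $t\cdot(3t,2,3t+2)$, again contributing $1$; and $\{y=-x,\,z=0\}$ is smooth, parametrized by $t\mapsto(t,-t,0)$ with velocity $(1,-1,0)\neq 0$, contributing $0$. Summing gives degree $1+1+0=2$, the parametrizations of the first two branches being exactly those extracted from the normal forms in Lemmas \ref{node} and \ref{cusp}.

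The only point requiring real care — and hence the main obstacle — is fixing, once and for all, the precise analytically invariant meaning of ``the degree of the ramification divisor of the normalization map'' so that it is genuinely additive over branches and agrees with Piene's invariant $v_b''$ in Proposition \ref{formula}; once that convention is pinned down (consistently with the generic-case check above), all three assertions reduce to the direct substitutions just carried out.
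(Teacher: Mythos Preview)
Your proposal is correct and follows essentially the same approach as the paper: both compute, branch by branch, the local length of $\Omega_{\tilde C/C}$ over the singular point. The only cosmetic difference is that for (1) the paper argues via the cotangent exact sequence and Nakayama's lemma (smooth branches force $n/n^2\to m/m^2$ to be surjective, hence $(\Omega_{\tilde C/C})_q=0$), whereas you parametrize directly and observe the velocity vector is nonzero; for (2) the paper writes out $\Omega_{\tilde C/C}=(\mathbb{C}[t]/(2t,3t^2))\,dt$ explicitly, which is exactly your velocity computation in module form.
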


\begin{proof}
(1) Let $q\in\tilde{C}$ be a point over the trinodal singularity $p\in C$. Consider the following exact sequence.
\[
(\pi^{*}\Omega_{C})_{q}\to\Omega_{\tilde{C},q}\to(\Omega_{\tilde{C}/C})_{q}\to 0.
\]
After we restrict the map $f\colon (\pi^{*}\Omega_{C})_{q}\to\Omega_{\tilde{C},q}$ to the fibers at $q$, we have the map $n/n^{2}\to m/m^{2}$, where $n$ is the maximal ideal in $\mathcal{O}_{C,p}$ and $m$ is the maximal ideal in $\mathcal{O}_{\tilde{C},q}$. This map is surjective as $\mathbb{C}$-linear spaces since the branches of the singularity are smooth. Hence $f$ is surjective by Nakayama's Lemma, which implies that $(\Omega_{\tilde{C}/C})_{q}=0$. Hence the normalization map $\pi\colon\tilde{C}\to C $ does not ramify at the preimages of $p$.  

(2) Since this is a local problem, we consider the normalization map
\[
\pi\colon Spec\ \mathbb{C}[t]\to Spec\ \mathbb{C}[x,y]/(y^{2}-x^{3}),
\]
\[
x\mapsto t^{2},\ \ y\mapsto t^{3}.
\]
Denote $\mathbb{C}[t]$ by $A$ and $\mathbb{C}[x,y]/(y^{2}-x^{3})$ by $B$. Then we have
\[
\begin{aligned}
\Omega_{\tilde{C}/C}&=\Omega_{A/B}\\
&=\Omega_{(B[t]/(t^{2}-x,\ t^{3}-y))/B}\\
&=(\mathbb{C}[t]/(2t,\ 3t^{2}))dt.
\end{aligned}
\]
Since our maximal ideal m in this case is $(t)$, we have length $(\Omega_{\tilde{C}/C})_{m}=1$.

(3) Let $q\in\tilde{C}$ be the point over $p$ whose neighborhood is mapped to the smooth branch of the singularity $p$. Then the normalization map $\pi\colon\tilde{C}\to C $ does not ramify at $q$ by the same argument in the proof of (1).

Now we look at the other points $r,s\in\tilde{C}$ over $p$ whose neighborhood is mapped to a cuspidal curve. By the calculation in (2), each of the points $r,s$ contribute 1 to the degree of the ramification divisor. Hence the ramification divisor of the normalization map $\pi\colon\tilde{C}\to C$ has degree 2. 
\end{proof}

\begin{lemma}\label{parabolic}
Let $S$ be a good surface. Then there are 1918 tangent curves with a node and a cusp.
\end{lemma}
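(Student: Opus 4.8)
The plan is to extract both unknown terms from the two sides of Piene's relation $(*)$ in Proposition \ref{formula}. By Lemma \ref{summary} a good surface $S$ has $b=480$, $\mu_0=36$, $\rho=320$ and $i=2560$, so $(*)$ gives
\[
\delta_b+v_b''=b(\mu_0-2)-\rho-i=480\times 34-320-2560=13440,
\]
the same value as in the generic case. It therefore suffices to express $\delta_b=g_a(C_d)-g(C_d)$ and $v_b''$, the degree of the ramification divisor of the normalization $\tilde{C_d^*}\to C_d^*$, as affine-linear functions of the number $N$ of tangent curves of $S$ with one node and one cusp, and then to solve $\delta_b+v_b''=13440$ for $N$.

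First I would compute $\delta_b$ by enumerating the singularities of the double-cover curve $C_d$. Everywhere except at the two nodes $p,q$ and the cusp $r$ of the distinguished tangent curve $C$, the surface $S$ behaves like a generic quartic, so there $C_d$ has an ordinary node (hence local $\delta=1$) at each of its $3\times 3198=9594$ Gauss triple points, the count coming from Theorem \ref{trinodes}, and an ordinary cusp (hence local $\delta=1$) at each of the $N$ dual-to-parabolic Gauss double points. At $p$ and at $q$, Lemma \ref{node} identifies the germ of $C_d$ with a cusp together with a line transverse to its tangent cone, a $D_5$ singularity, whose $\delta$-invariant is $\delta(\mathrm{cusp})+\delta(\mathrm{line})+(\mathrm{cusp}\cdot\mathrm{line})=1+0+2=3$. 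At $r$, Lemma \ref{cusp} says $C_d$ has a tacnode, of $\delta$-invariant $2$. Summing local $\delta$-invariants,
\[
\delta_b=9594+N+(3+3+2)=9602+N.
\]

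Next I would compute $v_b''$ from the singularities of $C_d^*$ via Lemma \ref{ramification}. The $3198$ triple points of $C_d^*$, the images of the Gauss triple points, are ordinary triple points, so contribute $0$ by Lemma \ref{ramification}(1); each of the $N$ cuspidal double points of $C_d^*$ is a cusp and contributes $1$ by Lemma \ref{ramification}(2). Since $p,q,r$ lie on one tangent plane of $S$, the Gauss map sends all three to the single point $\phi(p)=\phi(q)=\phi(r)$ of $C_d^*$, whose germ is the three-branch normal form of Lemma \ref{image}, and by Lemma \ref{ramification}(3) this point contributes $2$. Hence $v_b''=0+N+2=N+2$. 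Substituting into $\delta_b+v_b''=13440$ yields $9602+N+N+2=13440$, i.e. $2N=3836$, so $N=1918$. As a check, the same bookkeeping on a generic quartic, with $3200$ and $1920$ in place of $3198$ and $N$ and with no distinguished curve, returns $\delta_b=11520$ and $v_b''=1920$, which matches the generic verification of $(*)$ recorded after Proposition \ref{formula}.

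The main difficulty is not the arithmetic but the claim that these two enumerations of singular points are exhaustive. One has to argue that a good surface differs from a generic quartic only in that one tangent plane is tangent at a triple of points, two nonparabolic and one parabolic, rather than at three nonparabolic points, and that this single global coincidence — described locally by Lemmas \ref{node}, \ref{cusp} and \ref{image} — creates no singularities of $C_d$ or of $C_d^*$ beyond the extra contributions $8$ to $\delta_b$ and $2$ to $v_b''$ found above, every other local picture being as on a generic quartic. The only genuinely new local computation required is the $\delta$-invariant of the $D_5$ germ.
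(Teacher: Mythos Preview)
Your argument is arithmetically correct but circular in the paper's logical structure. Lemma \ref{parabolic} lives in Section 5, whose stated purpose is to give an independent geometric proof of Theorem \ref{trinodes}; the paper's ``Proof of Theorem \ref{trinodes}'' at the end of that section deduces the count $t=3198$ from Lemma \ref{parabolic} via $(*)$. Your proposal runs this in reverse: you import $t=3198$ from Theorem \ref{trinodes} to pin down $\delta_b$, and then solve $(*)$ for $N$. If one is willing to invoke the compactified-Jacobian proof of Theorem \ref{trinodes} from Section 4, your computation is valid, but then Section 5 no longer stands on its own. Within Section 5, your argument assumes what it is ultimately trying to establish.

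The paper's own proof avoids this entirely and is much shorter. Rather than going through $(*)$, it decomposes the known intersection number $i=C_d\cdot C_{par}=2560$ directly: on a good surface the points of $C_d\cap C_{par}$ are the parabolic Gauss double points (transverse, each contributing $1$), the $320$ Gauss swallowtails (each contributing $2$), and the single new point $r$, the cusp of the distinguished curve $C$, where $C_d$ has a tacnode by Lemma \ref{cusp} and hence the local intersection multiplicity with $C_{par}$ is $2$. Thus $2560=N+2\cdot 320+2$, giving $N=1918$ immediately. No knowledge of the number of trinodal tangent curves is required. Your computation of the $D_5$ $\delta$-invariant, and your bookkeeping for $\delta_b$ and $v_b''$, are fine and in fact reproduce exactly the inputs the paper uses \emph{after} Lemma \ref{parabolic} to finish the Section 5 proof of Theorem \ref{trinodes}; you have simply placed them on the wrong side of the implication.
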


\begin{proof}
Recall that $C_{par}$ and $C_{d}$ intersect transversally at ``parabolic Gauss double points" and intersect at ``Gauss swallowtails" with multiplicity 2 (\cite[$\S 2.5.3$]{Wi14}). Note that at the point $r\in C_{d}$, where $r\in C$ is the cuspidal point, $C_{par}$ intersect $C_{d}$ at $r$ with multiplicity 2 since $C_{d}$ has a tacnode at $r$ by Lemma \ref{cusp}. Now we calculate the number of ``parablic Gauss double points".
\[
\begin{aligned}
C_{d}\cdot C_{par}&=\#\{{\rm parabolic\ Gauss\ double\ points}\}+2\#\{{\rm Gauss\ swallowtails}\}+2\\
2560&=\#\{{\rm parabolic\ Gauss\ double\ points}\}+640+2\\
1918&=\#\{{\rm parabolic\ Gauss\ double\ points}\}.
\end{aligned}
\]
Hence we have 1918 ``parablic Gauss double points" in this case. 
\end{proof}

\noindent{\bf Proof of Theorem \ref{trinodes}.} We first calculate $v_{b}''$, which is the degree of the ramification divisor of the normalization map $\tilde{C_{d}^{*}}\to C_{d}^{*}$. $C_{d}^{*}$ has 1918 cuspidal singularities since there are 1918 tangent curves with a node and a cusp by Lemma \ref{parabolic}. The singularities are cusps by a similar argument as in Lemma \ref{node}. $C_{d}^{*}$ also has 1 singularity of the type in Lemma \ref{image}. All the other singularities are trinodal singularities coming from the tangent curves with three nodes. Hence by Lemma \ref{ramification}, we have
\[
v_{b}''=1918+2=1920.
\]

Now all the numerical terms in the following formula from Proposition \ref{formula} are known except for $\delta_{b}$ by Lemma \ref{summary} and the above argument.
\[
b(\mu_{0}-2)=\rho+\delta_{b}+v_{b}''+i.
\]
Hence we can calculate $\delta_{b}$.
\[
\begin{aligned}
16320&=320+\delta_{b}+1920+2560\\
\delta_{b}&=11520.
\end{aligned}
\]

Finally we look at $\delta_{b}$, which is the difference between the arithmetic genus and the geometric genus of $C_{d}$. Suppose there are $t$ tangent curves with three nodes. Then they will contribute $3t$ to $\delta_{b}$. There are 1918 tangent curves with a node and a cusp, which will contribute 1918 to $\delta_{b}$, since there are cuspidal singularities at ``dual to parabolic Gauss double points". The last three singularities come from the tangent curve with two nodes and one cusp, and we use Lemma \ref{node} and Lemma \ref{cusp}. At the cusp, $C_{d}$ has a tacnodal singularity, which contributes 2 to $\delta_{b}$. At each of the two nodes, $C_{d}$ has a singularity that is analytically isomorphic to $\{y^{3}-x^{2}\}\cup\{y=-x\}$, which contributes 3 to $\delta_{b}$. Hence we have
\[
\begin{aligned}
3t+1918+2+3\times 2&=11520\\
t&=3198.
\end{aligned}
\]  
$\square$

\end{document}